\def\d{\mathrm{d}}
\def\eps{{\epsilon}} 
\def\R{\mathbb{R}}
\renewcommand{\vec}[1]{\mbox{\boldmath$#1$}}
\def\sech{{\, \mathrm{sech} \, }}
\newtheorem{theorem}{Theorem}
\newtheorem{lemma}{Lemma}
\newtheorem{corollary}{Corollary}
\newtheorem{remark}{Remark}
\begin{document}
		
	\title{Melnikov theory for two-dimensional manifolds in three-dimensional flows}
	
	\author[1,4]{K.G.D. Sulalitha Priyankara}
	\affil[]{Department of Mathematics, Clarkson University, Potsdam, NY 13699, USA.}
	
	\author{Sanjeeva Balasuriya}
	\affil[]{School of Mathematical Sciences, University of Adelaide\\ Adelaide, SA 5005, Australia.}
	
	\author[3,4]{Erik Bollt}
	\affil[]{Department of Electrical and Computer Engineering\\ Clarkson University, Potsdam, NY 13699, USA.}
	\affil[4]{Clarkson Center for Complex Systems Science ($\mathrm{C}^3 \mathrm{S}^2)$, Potsdam, NY 13699, USA
}

\maketitle

\begin{abstract}
We present a Melnikov method to analyze two-dimensional  stable or unstable manifolds associated with a saddle point in three-dimensional  non-volume preserving autonomous systems. The time-varying perturbed locations of such manifolds is obtained under very general, non-volume preserving and with arbitrary time-dependence, perturbations.  In unperturbed situations with a two-dimensional heteroclinic manifold, we adapt our theory to
quantify the splitting into a stable and unstable manifold, and thereby obtain a
Melnikov function characterizing the time-varying locations of transverse intersections of these manifolds.  Formulas for lobe volumes arising from such intersections, as well as the instantaneous flux across the broken heteroclinic manifold, are obtained in terms of the Melnikov function.  Our theory has specific application to transport in fluid mechanics, where the flow is in three dimensions and flow separators are two-dimensional stable/unstable manifolds.   We demonstrate our theory using both the classical and the swirling versions of Hill's spherical vortex.
\end{abstract}


\section{Introduction}
\label{sec:introduction}

Melnikov methods were originally introduced \cite{melnikov} to analyze how a homoclinic connection (a coincident stable and unstable manifold of a saddle fixed point) splits when an autonomous system is perturbed. The original theory was confined to area-preserving flows under periodic perturbations \cite{melnikov}, thereby
allowing for the perturbed flow to be considered via a Poincar\'e map \cite{nonosci}. 
The Melnikov function is a scaled distance between stable and unstable manifolds into which the original homoclinic splits, and simple zeros of this function identify the places where stable and unstable manifolds intersect transversely \cite{melnikov,nonosci,wigi_book}. 
Establishing the existence of simple zeros of a Melnikov function serve as one of the
few methods for `proving chaos'; the proof works classically via the Smale-Birkhoff theorem \cite{arrowsmith_place,nonosci,wigi_book}
in two-dimensional maps, or time-periodic flows, with a one-dimensional {\em homoclinic} connection. 
Extensions of the `proof of chaos' idea also exist to some heteroclinic (when the manifolds are associated with different fixed points) situations \cite{bertozzi,article_chaosdg}.

Many reinterpretations and extensions of Melnikov theory exist (see the review chapters in \cite{siam_book}). The Melnikov function has been extended for implicitly
defined differential equations \cite{battellifeckanimplicit}, heteroclinic situations  \cite{tangential,article_chaosdg,3d}, for stochastic perturbations \cite{shen}, singular perturbations \cite{gruendlersingular,oka}, nonhyperbolic situations \cite{wechselberger,zang}, fixed points at infinity \cite{wechselberger}, degenerate homoclinics \cite{vanderbauwhede} and discontinuous 
\cite{duzhang,calamaifranca,battellifeckanhomoclinic,kukucka} and impulsive \cite{impulsive} vector fields.  Higher-dimensional (greater than two) extensions
are also available under some restrictions: usually volume-preserving or Hamiltonian flows  \cite{gruendler,3d,yamashita,lin,gideadelallave}.   In particular we highlight those in three dimensions associated with various
additional conditions \cite{gruendler,holmes1984,3d}, since these have strong applicability in realistic fluid flows in three dimensions.  Higher-order Melnikov methods have also been
developed \cite{duzhang}. 
It is also possible to couch the transverse intersection
problem in a functional analytic, rather than a geometric, form instead \cite{chowhalemalletparet,palmer,battellilazzari,finite,lin}---a method which transforms naturally to higher dimensions.  However, in such cases the kernel of the Melnikov integral
contains an {\em abstract} function which is not in general expressible explicitly for actual computation.

Most applications of Melnikov theory described above relate to defining a Melnikov function whose simple zeros imply the persistence of a homo/hetero-clinic connection \cite[e.g.]{melnikov,chowhalemalletparet,palmer,battellilazzari,finite,gruendler,holmes1984,3d,duzhang,calamaifranca,battellifeckanhomoclinic,kukucka,wechselberger,gruendlersingular,oka,battellifeckanimplicit,gideadelallave}.  In these cases, the Melnikov function does not necessarily express exactly the physical distance between stable and unstable manifolds which separate off the broken homo/hetero-clinic manifold. It generically represents a nonuniformly scaled version of this distance.  In the functional analytical developments in particular, this scaling is hidden; it is only the function's zeros which give the pertinent information on where the stable and unstable manifolds intersect.  It is less well-known that Melnikov developments can be adapted to characterize the {\em locations} of each of the perturbed stable and unstable manifolds---and not just the locations of their intersections in a homo/hetero-clinic situation.  One existing development of this in two dimensions enables locating perturbed stable or unstable manifolds of a saddle fixed point, due to the presence of a time-varying perturbation \cite{tangential}.  In this paper, we extend this theory to locating two-dimensional time-varying stable (or unstable) manifolds of a hyperbolic fixed point in a three-dimensional flow, due to the inclusion of a perturbation whose spatial derivatives are bounded for all time.  Note that we do not require volume preservation in either the unperturbed of perturbed flows, nor time-periodicity in the perturbation.  We emphasize that this development does not require a homo- or hetero-clinic situation, and is the first of the two main results of this paper.  We present this in Theorems~\ref{theorem:melnikov_unstable} and
\ref{theorem:melnikov_stable}, for respectively the unstable and stable manifolds.

An important aspect of most Melnikov developments is in obtaining an integral which is often called a Melnikov function.  Such an integral is well-known to play a role when one wishes to determine intersections in a broken homo/heteroclinic situation.  A similar definite integral, but with non-infinite limits, also appears when we locate perturbed stable/unstable manifolds.  In general, such integrals contain as kernel a particular function the knowledge of which is crucial to represent the Melnikov integral.  In Hamiltonian \cite[e.g.]{gruendler,holmes1984,gideadelallave}, as well as in volume-preserving unperturbed situations with a nondegenerate conserved quantity \cite{3d}, explicit forms for this kernel function can be determined. For more general situations, the kernel function can be expressed in more abstract terms: it is related to the fundamental matrix solution to the adjoint of the variational equation along the relevant homo/hetero-clinic trajectory.  Given that this adjoint equation is {\em nonautonomous}, its solutions {\em cannot} usually be written down explicitly, unless in special situations such as in two dimensions.  Therefore, while a Melnikov function might be expressible for such situations in an abstract sense \cite{chowhalemalletparet,lin,yamashita}, it is usually not computable.  Put another way, most Melnikov developments in dimensions greater than two, or which are not Hamiltonian, provide a theoretical result which is difficult to apply.  Knowing an explicit formula for the kernel function is such situations is therefore valuable. In our development, we are able to provide an explicit expression for it in our three-dimensional setting.  The formula is related to a triple scalar product associated with a parametrization of the two-dimensional manifold.  Thus, the Melnikov function that we develop for locating the perturbed version of such a manifold is {\em computable}, unlike that in many higher-dimensional non-Hamiltonian Melnikov developments. 

We have mentioned that most Melnikov developments work to determine intersections between the stable and unstable manifolds resulting from a broken homo/hetero-clinic manifold.  The new theory that we develop specializes to such a situation as well, and thus we are able to present a computable Meknikov function in a non-Hamiltonian, non-volume-preserving situation, in a dimension greater than two. Moreover, we are able to quantify transport across the broken heteroclinic in terms of this Melnikov function.  This is the second of the main results of this paper, which we present in Theorem~\ref{theorem:flux}.

Quantifying transport when a heteroclinic (a flow-separating curve) in two-dimensions is broken is a well-studied problem.  In two dimensions, the interweaving of the stable and unstable manifolds which split off the heteroclinic generates lobes, and transport can be characterized via 
the beautiful theory of lobe dynamics and turnstiles \cite{romkedar,wigi_book}.   This theory is confined to two-dimensional flows, and for an area of a lobe to be a well-defined characterizer of the transport engendered across the broken heteroclinic, several other features need to be in place: the flow needs to be area-preserving, and the perturbation `harmonic' in that it can be written as a spatially-varying two-dimensional function multiplied by a sinusoid in time.  The area of a lobe then expresses the amount of fluid transported across the broken heteroclinic during the time-periodicity of the perturbation, and can be expressed in terms of a definite integral of an appropriate Melnikov function \cite{romkedar,wigi_book}.  More general time-periodic situations generically do not have well-defined lobe areas because there can be many, differently sized lobes relevant to one iteration of the time-periodic map, or indeed no lobes at all because the perturbed manifolds do not intersect \cite{periodic}.  Obtaining a transport characterization in more general time-{\em aperiodic} situations therefore requires a slightly different approach, and has been provided in two-dimensional flows via 
a time-dependent flux idea \cite{aperiodic,siam_book,rossbyflux}.  As befitting any assessment of transport, this takes into account the {\em Lagrangian} motion of trajectories, rather than an Eulerian flux.  (This terminology stems from fluid mechanics in which `Lagrangian' refers to following the flow, while `Eulerian' in this context would mean measuring transport across fixed surfaces in space, without taking into account that these surfaces are themselves moving due to the flow.)  The instantaneous flux is shown to be characterized in these instances by the relevant Melnikov function, and not its integral.  

We are able to extend these broken heteroclinic results to our current three-dimensional setting. In time-harmonic, volume-preserving situations, a nice analogue of lobe dynamics is seen to occur; in this case, it is lobe {\em volumes} rather than areas that is relevant. We specifically obtain an analytic
formula for leading-order lobe volume in terms of an appropriate integral of the Melnikov function, thereby extending
a well-known two-dimensional result for lobe areas \cite{romkedar,wigi_book}. When volume-preservation and time-harmonicity are relaxed, we are able to define the instantaneous flux (volume per unit time) crossing the broken heteroclinic, extending the two-dimensional ideas in \cite{aperiodic,siam_book,rossbyflux}.  The instantaneous flux function is once again shown to have a direct connection to the Melnikov integral.

We remark that the transport characterization we provide for three-dimensional flows is motivated strongly by fluid mechanics.  Realistic flows in fluids are inherently three-dimensional, and internal flow separators must therefore be two-dimensional entities.  Two-dimensional stable and unstable manifolds are primary candidates for such flow separators.  Locating them and tracking their motion is therefore fundamental in determining boundaries between coherently moving regions of fluids; this is related to the field of `Lagrangian coherent structures' \cite{glcs}.  In particular, characterizing a flow rate (a flux, i.e., a volume of fluid per unit time) across a broken heteroclinic provides a direct assessment of the transport between two previously separated coherent regions.  It is precisely this which we are able to provide with our flux theory.  Similar theory has been used extensively for {\em two}-dimensional flows with one-dimensional flow separators due to the existence of pertinent Melnikov theory \cite[e.g.]{romkedar,romkedarpoje,periodic,aperiodic}, and can even give insight into how to perturb a flow to optimize mixing \cite{l2mixer,optimal}. However, genuine fluid flows are {\em three}-dimensional, and hence our current theory can extend these methods to significantly more realistic flows.

This paper is organized as follows.  In Sec.~\ref{sec:manifolds}, we build the general Melnikov theory for two-dimensional invariant manifolds of a three-dimensional
non-volume preserving flow.   We develop computable spatiotemporal expressions for locating such a manifold under general time-aperiodic perturbation.  This is the first of our main results (Theorems~\ref{theorem:melnikov_unstable} and \ref{theorem:melnikov_stable}).  This theory is adapted in Sec.~\ref{sec:heteroclinic} for the situation when the
unperturbed flow possesses a two-dimensional heteroclinic manifold.  The Melnikov function
we formulate can be used to identify transverse intersections of the perturbed stable and unstable manifolds, as well as to characterize instantaneous flux.  We emphasize that there is no requirement for
either time-periodicity or volume preservation, neither is it necessary for lobes to form. The flux theory still applies if there are no intersections of perturbed stable and unstable manifolds.
This development we use to rationalize the flux, and the accompanying simple formula we obtain 
in terms of the Melnikov function, is the second of our main results (Theorem~\ref{theorem:flux}).  We also establish connections
to more standard situations in two dimensions (sinusoidal perturbations with area-preservation) in which lobe dynamics applied \cite{romkedar,wigi_book}; in this case, transport is measured in terms of lobe {\em volumes}, which we express
in terms of the Melnikov function as well.
In Sec.~\ref{sec:hill}, we apply the theory to
both the classical Hill's spherical vortex  \cite{hill1894}, and a modification incorporating swirl \cite{article_Swhill}, respectively.  We conclude in Sec.~\ref{sec:conclude} with
some remarks on extensions and applications.
    
\section{Melnikov theory for 2-D manifolds}
\label{sec:manifolds}

In this section, we build a Melnikov theory for two-dimensional invariant manifolds that are attached to saddle points in three-dimensional autonomous dynamical systems.  We emphasize that the theory does {\em not} require a homo/hetero-clinic framework, which is the focus of most classical Melnikov approaches.  Rather, our
theory serves to characterize the {\em location}, as it varies with time, of a perturbed two-dimensional invariant manifold when the flow is subject to a very general perturbation.  We consider the system 
\begin{equation} 
	\dot{\vec{x}} = \vec{f}\left(\vec{x}\right) + \epsilon \, \vec{g}\left(\vec{x}, t \right), 
	\label{sys}
\end{equation} 
in which $\vec{x} \in \Omega \subset \mathbb{R}^3,$ $\vec{f} : \Omega \rightarrow \mathbb{R}^3$, $\vec{g}: \Omega \times \mathbb{R} \rightarrow \mathbb{R}^3$ and $0 < \epsilon \ll 1 $.  The $ \epsilon = 0 $ system of (\ref{sys})
is considered the {\em unperturbed} system. During this work we assume the following.
\begin{enumerate}
\item The function $\vec{f} \in \mathbf{C}^2(\Omega)$, and $ \vec{D} \vec{f} $ is bounded in $ \Omega $.
\item The point $\vec{a} \in \mathbb{R}^3$ is a saddle fixed point of the unperturbed system (i.e., (\ref{sys}) when $\epsilon = 0$). Thus, $ \vec{f}(\vec{a}) = \vec{0} $, and the eigenvalues of $ \vec{D} \vec{f} (\vec{a}) $ fall into one
of the following categories:
\begin{itemize}
		\item Case 1: one is negative, and the other two have positive real parts, or
		\item Case 2: one is positive, and the other two have negative real parts.
\end{itemize} 
\item The eigenvectors associated with the two-dimensional (unstable or stable, corresponding to cases~1 or 2 respectively) subspace of $ D \vec{f}(\vec{a}) $ are linearly independent.
\item The two-dimensional stable or unstable manifold identified above is $ \mathrm{C}^2 $-smooth.
\item For any $t \in \mathbb{R}$, the perturbing function $\vec{g}\left(\vec{x}, t \right) \in \mathbf{C}^2(\Omega).$ 
	Additionally,  both $\vec{g}$ and $ \vec{D} \vec{g}$ are bounded in $\Omega \times \mathbb{R}$.     
\end{enumerate}
In seeking expressions for the perturbed two-dimensional invariant manifold, we will focus on the two possibilities for the eigenvalues separately.

\subsection{Displacement of 2D unstable manifold}
\label{sec:unstable}

First, consider case~1, when the system (\ref{sys}) when $\epsilon = 0$ has one negative eigenvalue and two eigenvalues with positive real parts at the point $\vec{a}$. So the unperturbed system possesses a one-dimensional stable manifold and a two-dimensional unstable manifold. We are interested in characterizing the impact of the
perturbation (i.e., $ \epsilon \ne 0 $ in (\ref{sys})) on the two-dimensional manifold, $ \Gamma^u(\vec{a}) $.  We will identify different trajectories on $ \Gamma^u(\vec{a}) $ by the parameter $ \alpha \in {\mathrm{S}}^1$, that is,
$ \alpha \in [0,1) $, periodically extended with interval $ 1 $.  To explain this identification, consider the tangent plane
to $ \Gamma^u(\vec{a}) $ at $ \vec{a} $, and consider a small circle of radius $ \delta $ centered at $ \vec{a} $. 
We can think of $ \alpha $ as the angle going around the circle divided by $ 2 \pi $ (having chosen an $ \alpha = 0 $ location),
and at each $ \alpha $ value, the circle will intersect exactly one trajectory which lies on $ \Gamma^u(\vec{a}) $.
This is so whether $ \vec{D} \vec{f}(\vec{a}) $ has two negative, or two complex with negative real part, eigenvalues;
in the former case there will be non-spiralling trajectories, and in the latter case there will be spiralling trajectories,
going in to $ \vec{a} $.  In either situation $ \alpha $ as explained can be used to parametrize the choice of
trajectory.  Next, the time-variation along each trajectory will be parametrized by $ p $.  
Thus, if  $\bar{\vec{x}}^u(p, \alpha)$ is such a trajectory indexed by $ \alpha $, we have
\[
\frac{\partial \bar{\vec{x}}^u(p,\alpha)}{\partial p} = \vec{f} \left( \bar{\vec{x}}^u(p, \alpha) \right) \, , 
\]
because it is a trajectory of (\ref{sys}) with $ \epsilon = 0 $.  Now, the trajectory can extend outwards in various ways
(e.g., may be a heteroclinic trajectory and thus approach a different critical point, or may go to infinity).  To account for this, we limit
$ p $ to be in the set $ (-\infty,P] $ for any finite $ P $, and choose the $ p $-parametrization of nearby trajectories 
continuously.  Thus, we can use $ (p,\alpha) \in [-\infty,P] \times \mathrm{S}^1 $ to parametrize a restricted version of
$ \Gamma^u(\vec{a}) $ which avoids having to specify the limiting behavior of the unstable manifold trajectories
$  \bar{\vec{x}}^u(p,\alpha) $ in the limit $ p \rightarrow \infty $, while realizing that 
\[
\lim_{p \rightarrow -\infty} \bar{\vec{x}}^u(p, \alpha) = \vec{a} \, . 
\]
 Fig.~\ref{fig:unstable_unp} demonstrates the two-dimensional unstable manifold attached to the fixed point $\vec{a}$, and
 illustrating the roles of $ (p,\alpha) $ in parametrizing the manifold.  We assume that the parametrization by $ (p,\alpha) $ is $ \mathrm{C}^1 $-smooth.  This picture corresponds to $ \vec{D} \vec{f} (\vec{a}) $ having two negative eigenvalues; if they are complex with negative real parts instead, the trajectories will spiral
 (swirl) into $ \vec{a} $ instead.  
 
\begin{figure}[t]
\centering
\includegraphics[scale=0.33]{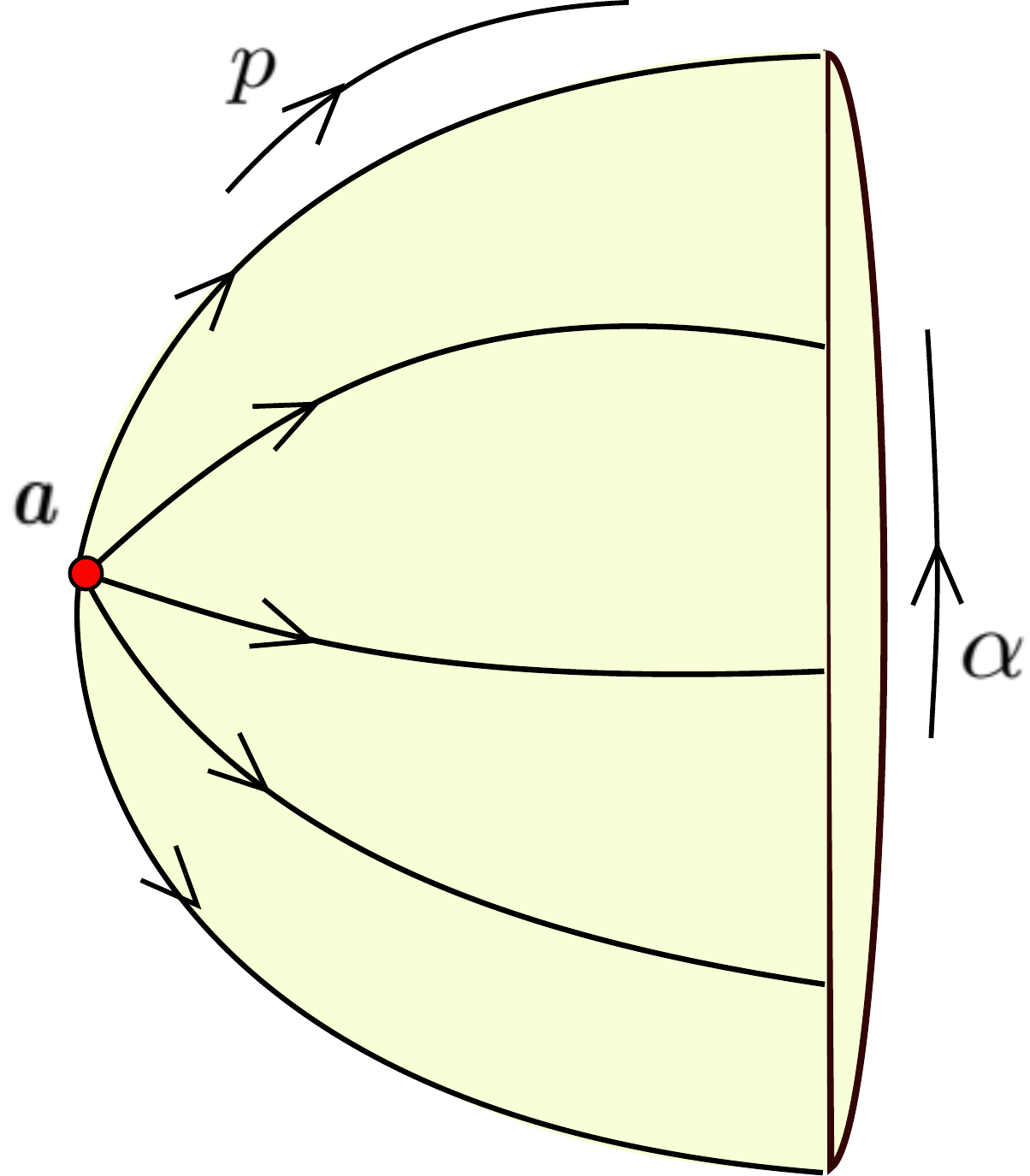}
\caption{The two-dimensional unperturbed unstable manifold $ \Gamma^u(\vec{a}) $ for case~1, with the red dot representing the saddle fixed point. The $ (p,\alpha) $-dependence of trajectories $(\bar{\vec{x}}^u(p, \alpha))$ can be used to parametrize $ \Gamma^u(\vec{a}) $. }
\label{fig:unstable_unp}
\end{figure}

While so far we have described the situation with respect to the system $ \dot{\vec{x}} = \vec{f}(\vec{x}) $, we may
consider instead the behavior within the {\em augmented system} where we append the equation $\dot{t} = 1$. In this situation, the phase space is now $ \Omega \times \mathbb{R} $, and the saddle fixed point $\vec{a}$ becomes a hyperbolic trajectory $(\vec{a}, t)$. Now, the unstable manifold in the four-dimensional augmented phase space will be parameterized as $(p, \alpha, t ) \in (-\infty, P] \times \mathrm{S}^1 \times (-\infty, T]$ for any finite $ T $.   Specifically, the point $\left(  \bar{\vec{x}}^u(p, \alpha), t \right) $, where $\alpha~\text{and}~p$ are spatial parameters and $t$ is time, now parameterizes the augmented unstable manifold. 

Now consider the impact of introducing the perturbation by setting $ \epsilon \ne 0 $ in (\ref{sys}).  Since $ \vec{g} $ 
is sufficiently smooth and bounded, the hyperbolic trajectory $ \left( \vec{a}, t \right) $ perturbs to  $ \left( \vec{a}_{\epsilon}(t), t \right)$ which is $\mathcal{O}(\epsilon) $-close to $\vec{a}$ for $t \in [-\infty,T] $. We caution that $ \vec{a}_\epsilon(t) $ {\em cannot} be obtained by seeking instantaneous fixed points of (\ref{sys}), but instead is defined in terms of
exponential dichotomies \cite{coppel,palmer}, and is in general difficult to compute.  Next, persistence
results of invariant manifolds associated with hyperbolic fixed points \cite{yi} indicate the presence of a perturbed unstable manifold, $\Gamma_{\epsilon}^u(\vec{a}_\epsilon,t) $ that is $\mathcal{O}(\epsilon) $-close to $\Gamma^u(\vec{a})$ at finite times $ t $. 
More specifically, suppose that
$ t \in (-\infty, T] $ is fixed, and we view the projections of the relevant manifolds on this time-slice $ t $.  See Fig.~\ref{fig:unstable_pert}, where the unperturbed manifold is in black, and the perturbed manifold is indicated in red.  
There is a point $ \vec{x}^u \left( p, \alpha, \epsilon, t \right) $ on the perturbed manifold which is $ {\mathcal O}(\epsilon) $-close to $ \bar{\vec{x}}^{u} \left( p, \alpha \right) $.  Our aim is to quantify the distance $ d^u(p,\alpha,\epsilon,t) $
obtained by projecting the vector $ \vec{x}^u \left( p, \alpha, \epsilon, t \right)  -  \bar{\vec{x}}^{u} \left( p, \alpha \right) $
on to the normal vector drawn to $ \Gamma^u(\vec{a}) $ at the point $  \bar{\vec{x}}^{u} \left( p, \alpha \right) $.  By
doing so, we will be able to give the location of the perturbed manifold $\Gamma_{\epsilon}^u(\vec{a}_\epsilon,t) $
parametrized by time $ t $ and the spatial variables $ (p,\alpha) $, to leading-order in $ \epsilon $.

\begin{figure}[t]
	\centering
	\includegraphics[width=0.7 \textwidth]{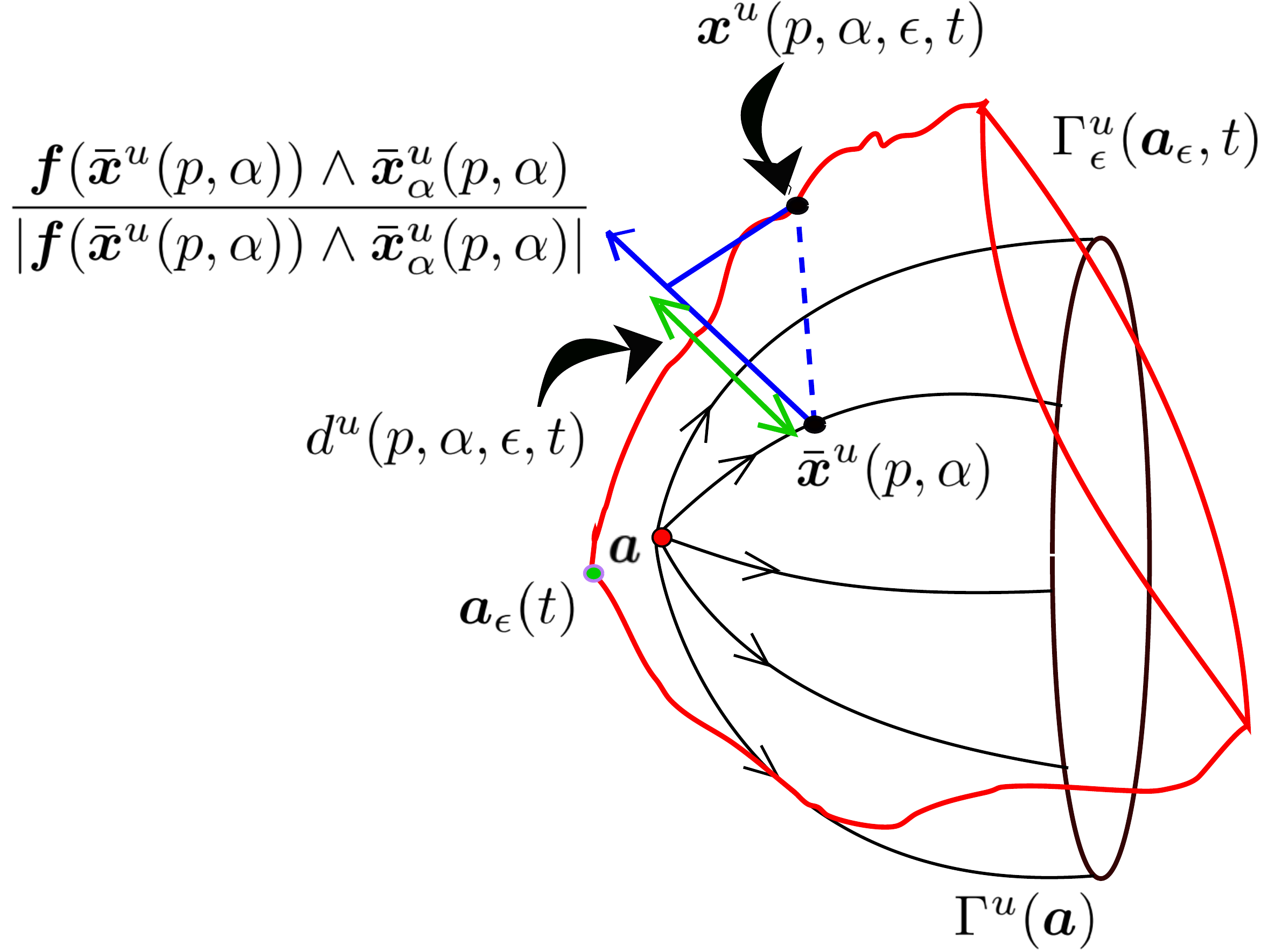}
	\caption{The perturbed unstable manifold $\Gamma^u_{\epsilon}(\vec{a}_\epsilon,t)$ (in red) at
	a general time $ t \in (-\infty,T] $, with the unperturbed unstable manifold, $\Gamma^u(\vec{a})$, shown in black.  
	The distance between perturbed and unperturbed unstable manifold denoted by $d^{u}(p,\alpha,\epsilon,t)$ is\ measured perpendicular to the original manifold.}
	\label{fig:unstable_pert}
\end{figure}

We note that the vector $\vec{f} \left( \bar{\vec{x}}^u(p, \alpha) \right)$ lies along a unstable manifold trajectory, since 
this is the velocity field at the point $  \bar{\vec{x}}^u(p, \alpha) $.  Moreover, $\bar{\vec{x}}^u_\alpha(p, \alpha)$, where
the subscript $ \alpha $ represents the partial derivative in this instance, is another vector which is tangential to
$\Gamma^u(\vec{a})$.  This vector must be transverse to  $\vec{f} \left( \bar{\vec{x}}^u(p, \alpha) \right)$; if tangential
at any value $ (p,\alpha) \in (-\infty,P] \times \mathrm{S}^1 $, that would relate to a failure of the trajectories (labelled by $ 
\alpha $) to foliate $ \Gamma^u(\vec{a}) $.  Thus, the standard cross
product between these two vectors is nonzero, and normal to $\Gamma^u(\vec{a})$ at $  \bar{\vec{x}}^u(p, \alpha) $.  We will
use the wedge notation for the cross product. Hence at time $t \in (-\infty, T]$, the distance between $\vec{x}^{u}(p,\alpha,\epsilon,t)~\text{and}~\bar{\vec{x}}^{u}(p, \alpha)$ is measured perpendicular to original unperturbed manifold can be represented as 
\begin{equation}
	d^{u}(p,\alpha,\epsilon,t) = \frac{ \vec{f}(\bar{\vec{x}}^{u}(p, \alpha)) \wedge \bar{\vec{x}}_\alpha^{u}(p, \alpha)}{\lvert \vec{f}(\bar{\vec{x}}^{u}(p, \alpha)) \wedge \bar{\vec{x}}_\alpha^{u}(p, \alpha)\rvert} \cdot \left[\vec{x}^{u}(p,\alpha,\epsilon,t)-\bar{\vec{x}}^{u}(p, \alpha)\right] 
	\quad , \quad (p,\alpha,t)  \in (-\infty,P] \times  \mathrm{S}^1  \times (-\infty,T] \, .
\label{eq:unstable_distance}
\end{equation}  

\begin{theorem}[Displacement of unstable manifold]
For $ (p,\alpha,t)  \in (-\infty,P] \times  \mathrm{S}^1 \times (-\infty,T] $, 
the distance $d^{u}(p,\alpha,\epsilon,t)$ can be expanded in $\epsilon$ in the form 
\begin{equation}
		d^{u}(p,\alpha,\epsilon,t) = \epsilon \frac{ M^u(p, \alpha, t)}{\lvert \vec{f}(\bar{\vec{x}}^{u}(p,\alpha)) \wedge \bar{\vec{x}}_\alpha^{u}(p, \alpha)\rvert} + \mathcal{O}(\epsilon^2),
\label{eq:distance_unstable}
\end{equation} 
where the unstable Melnikov function is the convergent improper integral 
\begin{equation}
			M^{u}(p,\alpha,t) = \int_{-\infty}^p \exp\left[{\int_{\tau}^p \nabla \cdot \vec{f}(\bar{\vec{x}}^{u}(\xi, \alpha)) d\xi}\right] \left[ \vec{f}(\bar{\vec{x}}^{u}(\tau, \alpha)) \wedge \bar{\vec{x}}_{\alpha}^{u}(\tau, \alpha)\right] \cdot \vec{g}(\bar{\vec{x}}^{u}(\tau, \alpha),\tau + t -p)~ \d \tau.
\label{eq:melnikov_unstable}
\end{equation}
\label{theorem:melnikov_unstable} 
\end{theorem}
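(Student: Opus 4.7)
The plan is a Melnikov-style perturbation argument adapted to this three-dimensional, non-volume-preserving setting with an extra spatial parameter $\alpha$. First I would reparametrize the perturbed manifold so that $p$ plays the role of time along each perturbed trajectory: demanding that $s \mapsto \vec{x}^u(p+s,\alpha,\epsilon,t+s)$ be a genuine trajectory of (\ref{sys}) gives the compatibility identity $(\partial_p + \partial_t)\vec{x}^u = \vec{f}(\vec{x}^u) + \epsilon\,\vec{g}(\vec{x}^u,t)$. Freezing the combination $\sigma := t-p$ (which is constant along any trajectory) converts this into a pure ODE in $p$. Expanding $\vec{x}^u = \bar{\vec{x}}^u + \epsilon\,\vec{x}_1^u + \O{\epsilon^2}$ and collecting $\O{\epsilon}$ terms yields the forced variational equation
\begin{equation*}
\partial_p \vec{x}_1^u = \vec{D}\vec{f}(\bar{\vec{x}}^u)\,\vec{x}_1^u + \vec{g}(\bar{\vec{x}}^u,\, p+\sigma),
\end{equation*}
with $\vec{x}_1^u$ remaining bounded as $p \to -\infty$ (it approaches the first-order correction of the hyperbolic trajectory $\vec{a}_\epsilon$ furnished by the exponential dichotomy).

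Next I would project onto the normal by setting $m(p,\alpha,\sigma) := [\vec{f}(\bar{\vec{x}}^u) \wedge \bar{\vec{x}}_\alpha^u]\cdot \vec{x}_1^u$ and differentiating in $p$. Since both $\vec{f}(\bar{\vec{x}}^u)$ and $\bar{\vec{x}}_\alpha^u$ solve the homogeneous variational equation $\partial_p \vec{v} = \vec{D}\vec{f}(\bar{\vec{x}}^u)\vec{v}$, the four terms produced by $\partial_p m$ collapse using the determinant identity
\begin{equation*}
[B\vec{u}\wedge \vec{v}]\cdot \vec{w} + [\vec{u}\wedge B\vec{v}]\cdot \vec{w} + [\vec{u}\wedge \vec{v}]\cdot B\vec{w} = \mathrm{tr}(B)\,[\vec{u}\wedge \vec{v}]\cdot \vec{w},
\end{equation*}
which follows by differentiating $\det(A\vec{u},A\vec{v},A\vec{w})=(\det A)\det(\vec{u},\vec{v},\vec{w})$ at $A=I+\eta B$ and specializing to $B=\vec{D}\vec{f}(\bar{\vec{x}}^u)$. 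The result is the scalar linear ODE
\begin{equation*}
\partial_p m = [\nabla\cdot \vec{f}(\bar{\vec{x}}^u)]\,m + [\vec{f}(\bar{\vec{x}}^u)\wedge \bar{\vec{x}}_\alpha^u]\cdot \vec{g}(\bar{\vec{x}}^u,\, p+\sigma),
\end{equation*}
which I would solve using the integrating factor $\exp[-\int^p \nabla\cdot \vec{f}(\bar{\vec{x}}^u(\xi,\alpha))\,\d\xi]$. Reverting $\sigma=t-p$ reproduces (\ref{eq:melnikov_unstable}), and dividing by $\lvert \vec{f}(\bar{\vec{x}}^u)\wedge \bar{\vec{x}}_\alpha^u\rvert$ yields (\ref{eq:distance_unstable}).

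The main obstacle is controlling the boundary contribution at $\tau\to -\infty$ in the integrating-factor solution: because $\vec{x}_1^u$ does not vanish there, all the decay must come from the prefactor $\exp[\int_\tau^p \nabla\cdot \vec{f}\,\d\xi]\,[\vec{f}(\bar{\vec{x}}^u)\wedge \bar{\vec{x}}_\alpha^u]$ evaluated at $\tau$. Near $\vec{a}$ both tangential vectors lie in the unstable subspace, so their wedge decays like $e^{\mathrm{Re}(\lambda_1+\lambda_2)\tau}$ where $\lambda_{1,2}$ are the unstable eigenvalues, while the exponential factor grows like $e^{-\mathrm{tr}(\vec{D}\vec{f}(\vec{a}))\tau} = e^{-(\mathrm{Re}(\lambda_1+\lambda_2)+\lambda_3)\tau}$ with $\lambda_3<0$ the stable eigenvalue; the product decays like $e^{-\lambda_3\tau}\to 0$. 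This same estimate, together with the uniform bound on $\vec{g}$ from assumption~4, gives absolute convergence of the improper integral defining $M^u$, and a standard Gr\"onwall bound on the perturbed flow closes the $\O{\epsilon^2}$ remainder uniformly on $(-\infty,P]\times \mathrm{S}^1 \times(-\infty,T]$.
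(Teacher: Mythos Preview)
Your proposal follows the same strategy as the paper's: project the displacement onto the normal $\vec{f}\wedge\bar{\vec{x}}_\alpha^u$, use the trace/wedge identity to reduce to a scalar linear ODE in the along-trajectory variable, solve via the divergence integrating factor, and control the $\tau\to-\infty$ contribution by the eigenvalue decay estimate near $\vec{a}$ (the paper isolates exactly your estimate as a separate lemma).

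The one organizational difference worth noting is that the paper does \emph{not} first formally expand $\vec{x}^u = \bar{\vec{x}}^u + \epsilon\vec{x}_1^u + \O{\epsilon^2}$. Instead it works with the exact scaled difference $\vec{z}^u := \epsilon^{-1}(\vec{x}^u-\bar{\vec{x}}^u)$, which is uniformly bounded on $(-\infty,t]$ by the manifold-persistence result already assumed, and derives an \emph{exact} ODE
\[
\partial_\tau\widetilde{M}^u = (\nabla\cdot\vec{f})\,\widetilde{M}^u + [\vec{f}\wedge\bar{\vec{x}}_\alpha^u]\cdot\vec{g} + \epsilon H(\tau)
\]
for $\widetilde{M}^u := [\vec{f}\wedge\bar{\vec{x}}_\alpha^u]\cdot\vec{z}^u$. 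The remainder $\widetilde{M}^u - M^u$ is then bounded by $\epsilon\int_{-\infty}^t \exp\bigl[\int_\tau^t\nabla\cdot\vec{f}\,\d\xi\bigr]\,|H(\tau)|\,\d\tau$, and the same eigenvalue decay you invoke makes this integral finite. This gives the paper a clean $\O{\epsilon}$ remainder without ever needing to control a second-order expansion of the trajectory over the noncompact interval $(-\infty,p]$. By contrast, your closing ``standard Gr\"onwall bound'' on that interval is not quite standard: a naive Gr\"onwall on an unbounded time interval does not produce uniform bounds, and you would again have to feed in the exponential dichotomy to make it work. Since you already invoke the dichotomy earlier, this is a bookkeeping point rather than a genuine gap, but the paper's route avoids it entirely.
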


\begin{proof}
This lengthy proof requires many stages, and is therefore given in Appendix~\ref{sec:normaldis3}.  Several results
which are ingredients in the proof are separated out into additional appendices for clarity.
\end{proof}

For volume-preserving unperturbed flows, the term $\nabla \cdot \vec{f}(\bar{\vec{x}}^{u}(\xi, \alpha))$ is zero,
and consequently the integrand of $ M^u $ loses the exponential term.  Additionally, we note that it is only the
normal component of the perturbation $ \vec{g} $, evaluated in appropriate retarded time, that contributes to the
leading-order normal displacement which is captured by $ M^u $.

\begin{remark}[Approximation of $ \Gamma_\epsilon^u $]
{\em
Theorem~\ref{theorem:melnikov_unstable} enables a natural approximation for $ \Gamma_\epsilon^u $, with
knowledge of the unperturbed flow and the perturbation velocity alone.
If $ (p,\alpha,t)  \in (-\infty,P] \times \mathrm{S}^1 \times (-\infty,T] $ are the parameters for the parametric representation
of a general point $ \vec{r}^u $ on  $ \Gamma_\epsilon^u $ at time $ t $, then
\begin{equation}
\vec{r}^u(p,\alpha,\epsilon,t) \approx  \bar{\vec{x}}^{u}(p, \alpha) + \epsilon \, M^u(p, \alpha, t) \frac{ \vec{f}(\bar{\vec{x}}^{u}(p, \alpha)) \wedge \bar{\vec{x}}_\alpha^{u}(p, \alpha)}{\lvert \vec{f}(\bar{\vec{x}}^{u}(p, \alpha)) \wedge \bar{\vec{x}}_\alpha^{u}(p, \alpha)\rvert^2} 
\label{eq:unstableapprox}
\end{equation}
provides a (leading-order in $ \epsilon $) parametric representation of $ \Gamma_\epsilon^u $. (While each 
trajectory on the manifold is known to exhibit a $ {\mathcal O}(\epsilon) $ tangential displacement as well, see \cite{tangential} for a quantification in two-dimensions, in a global view of the manifold as a collection of trajectories, using the normal displacement by itself provides an excellent approximation to the manifold.)
}
\label{remark:unstableapprox}
\end{remark}

\begin{remark}[Kernel of the Melnikov integral]
{\em
Melnikov functions in general dimensions using functional-analytic approaches \cite{chowhalemalletparet,palmer,battellilazzari,finite,lin}
 for determining persistent heteroclinic intersections usually
take the form
\[
M \sim \int_{-\infty}^\infty \vec{\Phi} \, \vec{g} \, \d \tau \, .
\]
Here, the entity $ \vec{\Phi} $ is associated with the fundamental matrix solution of the adjoint of the equation of
variations along the heteroclinic trajectory \cite{chowhalemalletparet,palmer,battellilazzari,finite,lin},
and is usually not computable except in two dimensions, or else if there is a Hamiltonian structure in the
unperturbed system \cite{gideadelallave}.  The reason for non-computability in general is that this is a {\em nonautonomous} linear equation, for which generally solutions cannot be explicitly written down; hence
the Melnikov approach is a interesting theoretical tool which replaces one issue (finding persistent heteroclinics) with another (finding zeros of a function with a kernel which satisfies a certain property, but which cannot in general be explicitly given by a formula).  Our geometric approach in three dimensions, leading to (\ref{eq:melnikov_unstable}), is the first insight into an explicit form of this kernel function when these conditions are relaxed.  (Note however that our limits
are not over all of $ \mathbb{R} $, because at this stage we are seeking the {\em location} of the perturbed manifold
rather than a persistent heteroclinic connnection.)
\label{remark:kernel}
}
\end{remark}

\subsection{Displacement of 2D stable manifold}
\label{sec:stable}

\begin{figure}[h]
\centering
	\includegraphics[scale=0.33]{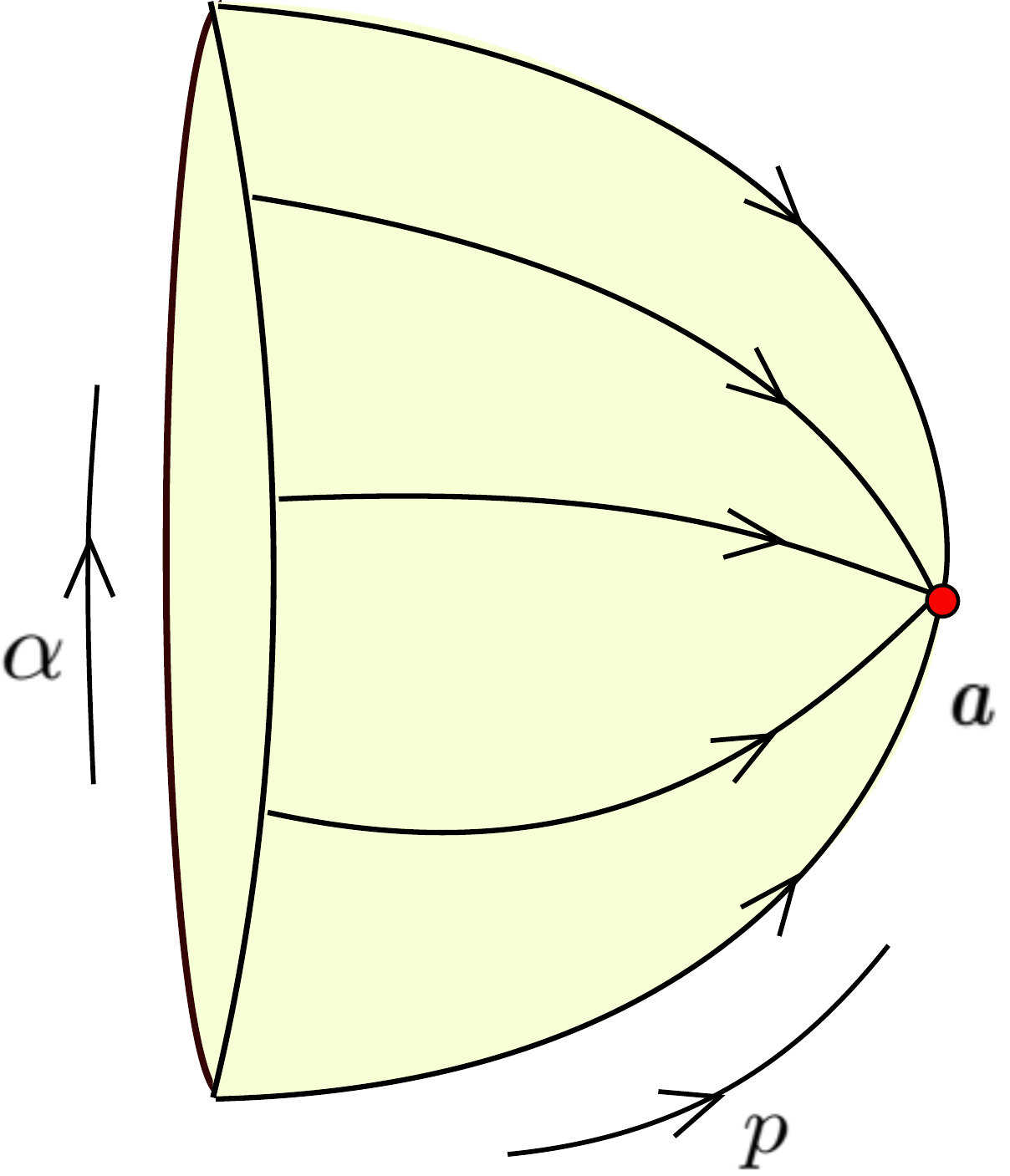}
	\caption{The two-dimensional unperturbed stable manifold $ \Gamma^s(\vec{a}) $ for case~2, with the red dot representing the saddle fixed point. The $ (p,\alpha) $-dependence of trajectories $(\bar{\vec{x}}^s(p, \alpha))$ can be used to parametrize $ \Gamma^s(\vec{a}) $.}
	\label{fig:stable_unp}
\end{figure}

Secondly, consider case~2,  when $ \vec{D} \vec{f} \left( \vec{a} \right) $ has one positive eigenvalue and two eigenvalues with negative real parts at the point $\vec{a}$. So the unperturbed system posses a one-dimensional unstable manifold and a two-dimensional stable manifold associated with the fixed point $ \vec{a} $. It is once again the displacement of the
two-dimensional entity that we capture, in this case, the stable manifold.  Rather than repeat the
development in detail, we will rely on Figs.~\ref{fig:stable_unp} and \ref{fig:stable_pert} which are exactly  
analogous to case~1's Figs.~\ref{fig:unstable_unp} and \ref{fig:unstable_pert}. The unperturbed stable
manifold $ \Gamma^s(\vec{a}) $ is foliated by trajectories $ \bar{\vec{x}}^s(p,\alpha) $ which forward asymptote to
$ \vec{a} $ as $ p \rightarrow \infty $ (Fig.~\ref{fig:stable_unp}). The perturbed stable manifold $ \Gamma^s_\epsilon(\vec{a}_\epsilon,t) $
is attached to the hyperbolic trajectory $ \left( \vec{a}_\epsilon(t), t \right) $ (Fig.~\ref{fig:stable_pert}).
A time $ t \in [T, \infty) $ (where $ T $ is finite) is chosen, and then the parameters $ p \in [P, \infty) $ (for finite $ P $)
and $ \alpha \in \mathrm{S}^1 $  parametrize $ \Gamma^s_\epsilon(\vec{a}_\epsilon,t) $.

As shown in Fig.~\ref{fig:stable_pert}, the perpendicular distance between perturbed stable manifold $(\Gamma_\epsilon^s(\vec{a}))$ and unperturbed stable manifold $(\Gamma^s(\vec{a}))$ at the location of $\bar{\vec{x}}^s(p, \alpha)$ in the time instance $t$ is given by 
\begin{equation}
	d^{s}(p,\alpha,\epsilon,t) = \frac{ \vec{f}(\bar{\vec{x}}^{s}(p, \alpha)) \wedge \bar{\vec{x}}_\alpha^{s}(p, \alpha)}{\lvert \vec{f}(\bar{\vec{x}}^{s}(p, \alpha)) \wedge \bar{\vec{x}}_\alpha^{s}(p, \alpha)\rvert} \cdot \left[\vec{x}^{s}(p,\alpha,\epsilon,t)-\bar{\vec{x}}^{s}(p, \alpha)\right] 
	\quad , \quad (p,\alpha,t)  \in [P, \infty) \times \mathrm{S}^1 \times [T, \infty) \, .
\label{eq:distance_stable}
\end{equation}

\begin{figure}[t]
	\centering
	\includegraphics[width=0.7 \textwidth]{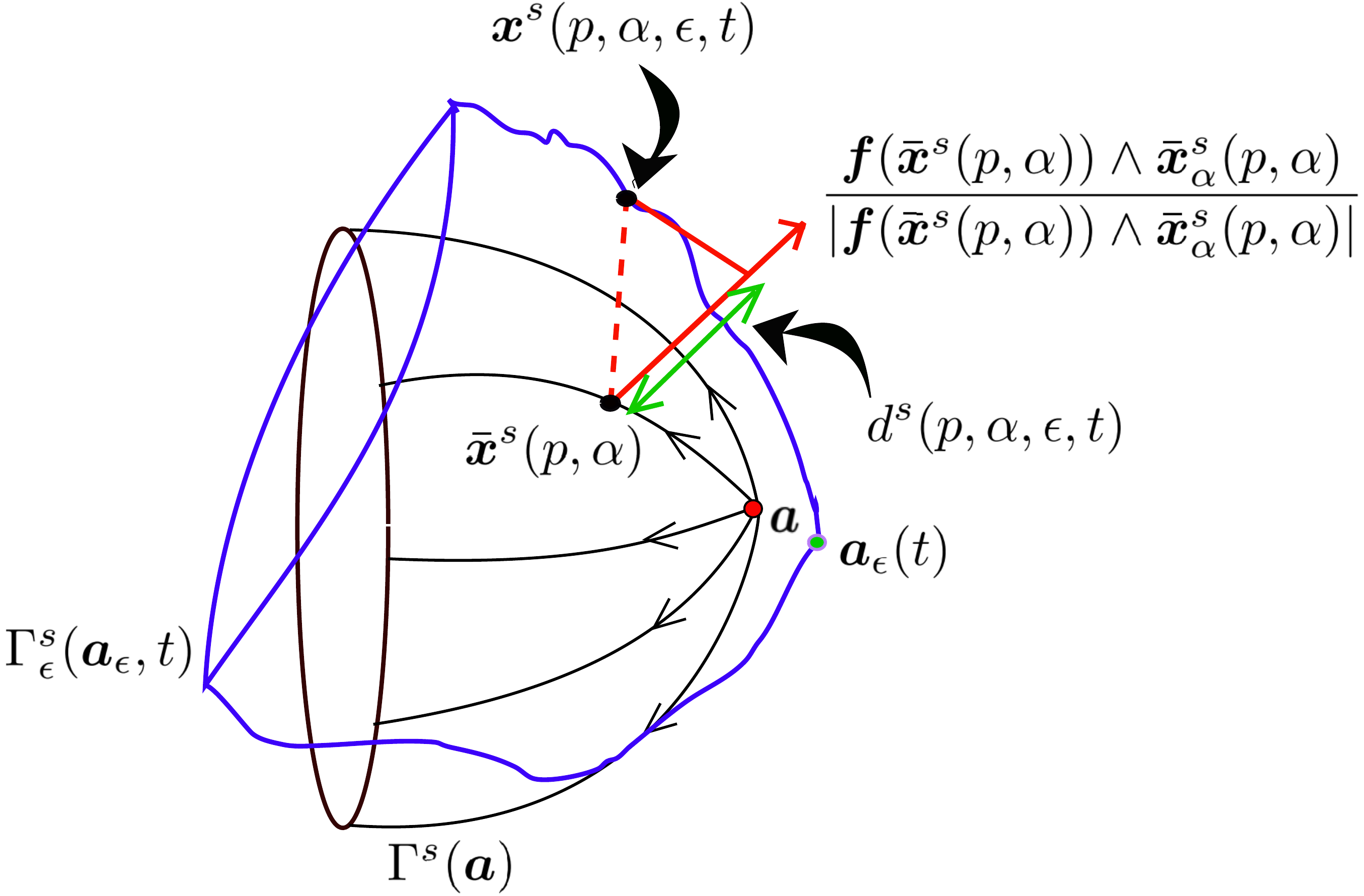}
	\caption{The perturbed stable manifold, which is denoted by $\Gamma^s_{\epsilon}(\vec{a}_\epsilon,t)$ (in blue), at
	a general time $ t \in [T, \infty) $, with the unperturbed stable manifold, $\Gamma^s(\vec{a})$ shown in black.  
	The distance between perturbed and unperturbed stable manifold denoted by $d^{s}(p,\alpha,\epsilon,t)$ is
	measured perpendicular to the original manifold.}
	\label{fig:stable_pert}
\end{figure}

\begin{theorem}[Displacement of stable manifold]
For $ (p,\alpha,t)  \in [P,\infty) \times  \mathrm{S}^1 \times [T,\infty)  $, 
the distance $d^{s}(p,\alpha,\epsilon,t)$ can be expanded in $\epsilon$ in the form \begin{equation}
		d^{s}(p,\alpha,\epsilon,t) = \epsilon \frac{ M^s(p, \alpha, t)}{\lvert \vec{f}(\bar{\vec{x}}^{s}(p, \alpha)) \wedge \bar{\vec{x}}_\alpha^{s}(p, \alpha)\rvert} + \mathcal{O}(\epsilon^2),
		\label{eq513}
	\end{equation} 
	where the stable Melnikov function is the convergent improper integral 
	\begin{equation}
			M^{s}(p,\alpha,t) = - \int^{\infty}_p \exp\left[{\int_{\tau}^p \nabla \cdot \vec{f}(\bar{\vec{x}}^{s}(\xi, \alpha)) d\xi}\right] \left[ \vec{f}(\bar{\vec{x}}^{s}(\tau, \alpha)) \wedge \bar{\vec{x}}_{\alpha}^{s}(\tau, \alpha)\right] \cdot \vec{g}(\bar{\vec{x}}^{s}(\tau, \alpha),\tau + t -p)~ \d \tau \, .
			\label{eq:melnikov_stable}
			\end{equation}
				\label{theorem:melnikov_stable} 
\end{theorem}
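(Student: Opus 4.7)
The plan is to mirror the proof of Theorem~\ref{theorem:melnikov_unstable} (deferred to Appendix~\ref{sec:normaldis3}), changing only the ingredients that depend on the direction in which trajectories approach the saddle. Case~2 differs from case~1 only in that $\bar{\vec{x}}^{s}(p,\alpha)\to\vec{a}$ as $p\to +\infty$ rather than $p\to -\infty$, and consequently the linear variational problem for the normal displacement must be closed with a boundary condition at $p=+\infty$. I would begin from the Taylor expansion $\vec{x}^{s}(p,\alpha,\epsilon,t) = \bar{\vec{x}}^{s}(p,\alpha) + \epsilon\,\vec{x}_{1}^{s}(p,\alpha,t) + \mathcal{O}(\epsilon^{2})$, and, using the fact that advancing $\vec{x}^{s}(p,\alpha,\epsilon,t)$ along its trajectory under (\ref{sys}) by a time increment $\Delta t$ delivers $\vec{x}^{s}(p+\Delta t,\alpha,\epsilon,t+\Delta t)$, derive the transport equation $(\partial_{p}+\partial_{t})\vec{x}^{s}=\vec{f}(\vec{x}^{s})+\epsilon\,\vec{g}(\vec{x}^{s},t)$, whose $\mathcal{O}(\epsilon)$ part is a linear variational system for $\vec{x}_{1}^{s}$ that becomes an ODE in $p$ along the characteristics $t-p=\mathrm{const}.$

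Next I would project onto the normal direction by setting
\begin{equation*}
M^{s}(p,\alpha,t) := \bigl[\vec{f}(\bar{\vec{x}}^{s}(p,\alpha)) \wedge \bar{\vec{x}}_{\alpha}^{s}(p,\alpha)\bigr] \cdot \vec{x}_{1}^{s}(p,\alpha,t),
\end{equation*}
so that via (\ref{eq:distance_stable}) the expansion (\ref{eq513}) is identified as soon as $M^{s}$ is known. Differentiating $M^{s}$ along the characteristic, using the trajectory identities $\partial_{p}\vec{f}(\bar{\vec{x}}^{s})=\vec{D}\vec{f}\,\vec{f}$ and $\partial_{p}\bar{\vec{x}}_{\alpha}^{s}=\partial_{\alpha}\vec{f}(\bar{\vec{x}}^{s})=\vec{D}\vec{f}\,\bar{\vec{x}}_{\alpha}^{s}$, together with the scalar-triple-product identity that turns the sum of the three traces of $\vec{D}\vec{f}$ acting on $(\vec{u},\vec{v},\vec{w})$ into $(\nabla\cdot\vec{f})\,[\vec{u}\wedge\vec{v}]\cdot\vec{w}$, collapses the computation to the first-order linear ODE
\begin{equation*}
\frac{d M^{s}}{d p} - \nabla \cdot \vec{f}(\bar{\vec{x}}^{s}(p,\alpha))\,M^{s} = \bigl[\vec{f}(\bar{\vec{x}}^{s}(p,\alpha)) \wedge \bar{\vec{x}}_{\alpha}^{s}(p,\alpha)\bigr] \cdot \vec{g}(\bar{\vec{x}}^{s}(p,\alpha),t).
\end{equation*}

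The decisive change from Theorem~\ref{theorem:melnikov_unstable} enters at the boundary condition: because trajectories on $\Gamma^{s}_{\epsilon}$ converge exponentially to the hyperbolic trajectory $(\vec{a}_{\epsilon}(t),t)$ as $p\to +\infty$ (by persistence of normally hyperbolic invariant manifolds and the exponential-dichotomy theory of \cite{yi,coppel,palmer}), one has $M^{s}(p,\alpha,t)\to 0$ as $p\to +\infty$. Solving the ODE with this right-hand condition reverses the limits of integration relative to the unstable case, which is precisely what produces the overall minus sign in (\ref{eq:melnikov_stable}); parametrising the characteristic by $\tau$ with $\tau=p$ corresponding to time $t$ then inserts the retarded time $\tau+t-p$ inside $\vec{g}$, and (\ref{eq:melnikov_stable}) follows.

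The main obstacle, exactly as in Theorem~\ref{theorem:melnikov_unstable}, will be justifying both the vanishing of $M^{s}$ at $p=+\infty$ and the convergence of the resulting improper integral on $[p,\infty)$. Both reduce to a single exponential-decay estimate near the saddle: the geometric factor $|\vec{f}(\bar{\vec{x}}^{s}(\tau,\alpha))\wedge\bar{\vec{x}}_{\alpha}^{s}(\tau,\alpha)|$ decays as $\tau\to +\infty$ at the rate controlled by the most negative real part of an eigenvalue of $\vec{D}\vec{f}(\vec{a})$, which by the trace-equals-sum-of-eigenvalues identity strictly dominates the maximum growth rate of the integrating-factor exponential, and combined with the assumed boundedness of $\vec{g}$ this delivers an integrable bound on the Melnikov integrand. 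These estimates are the direct mirror of those in Appendix~\ref{sec:normaldis3} with the direction of integration reversed, so the remaining details should transcribe without essential change.
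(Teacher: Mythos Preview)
Your proposal is correct and follows essentially the same route as the paper: the paper's proof simply states that one should reverse time ($p\to -p$, $t\to -t$) so that the argument of Appendix~\ref{sec:normaldis3} carries over verbatim, which is precisely what your more explicit redo accomplishes by imposing the boundary condition $M^{s}\to 0$ as $p\to +\infty$ and integrating the variational ODE backward from there. One small imprecision worth tightening: the decay rate of $|\vec{f}\wedge\bar{\vec{x}}_{\alpha}^{s}|$ is governed by the \emph{sum} $\mathrm{Re}(\lambda_{1}^{s}+\lambda_{2}^{s})$ of the two stable eigenvalues (not ``the most negative real part''), and the net exponent of the integrand becomes $-\lambda^{u}\tau$ after cancellation with the trace in the integrating factor, exactly mirroring Lemma~\ref{lemma:fxalpha}.
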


\begin{proof}
The proof is exactly analogous to that for Theorem~\ref{theorem:melnikov_unstable}, when we think of simply reversing
time.  Thus we think of $ p \rightarrow - p $ (this reverses the time parametrization along an unperturbed trajectory
on the 2D manifold), and $ t \rightarrow - t $.  All the ingredients of the proof essentially go through with this
understanding.   For the sake of brevity, no further details will be given.
\end{proof}

\begin{remark}[Approximation of $ \Gamma_\epsilon^s $]
{\em
If $ (p,\alpha,t)  \in  [P, \infty) \times  \mathrm{S}^1 \times [T, \infty)$ are the parameters for the parametric representation
of a general point $ \vec{r}^s $ on  $ \Gamma_\epsilon^s $ at time $ t $, then
\begin{equation}
\vec{r}^s(p,\alpha,\epsilon,t) \approx  \bar{\vec{x}}^{s}(p, \alpha) + \epsilon \, M^s(p, \alpha, t) \frac{ \vec{f}(\bar{\vec{x}}^{s}(p, \alpha)) \wedge \bar{\vec{x}}_\alpha^{s}(p, \alpha)}{\lvert \vec{f}(\bar{\vec{x}}^{s}(p, \alpha)) \wedge \bar{\vec{x}}_\alpha^{s}(p, \alpha)\rvert^2} 
\label{eq:stableapprox}
\end{equation}
provides a (leading-order in $ \epsilon $) parametric representation of $ \Gamma_\epsilon^s $. 
}
\end{remark}

\section{Melnikov function for heteroclinic manifolds} 
\label{sec:heteroclinic}

\begin{figure}[p]
	\centering
	\subfigure[]{\includegraphics[scale=0.29]{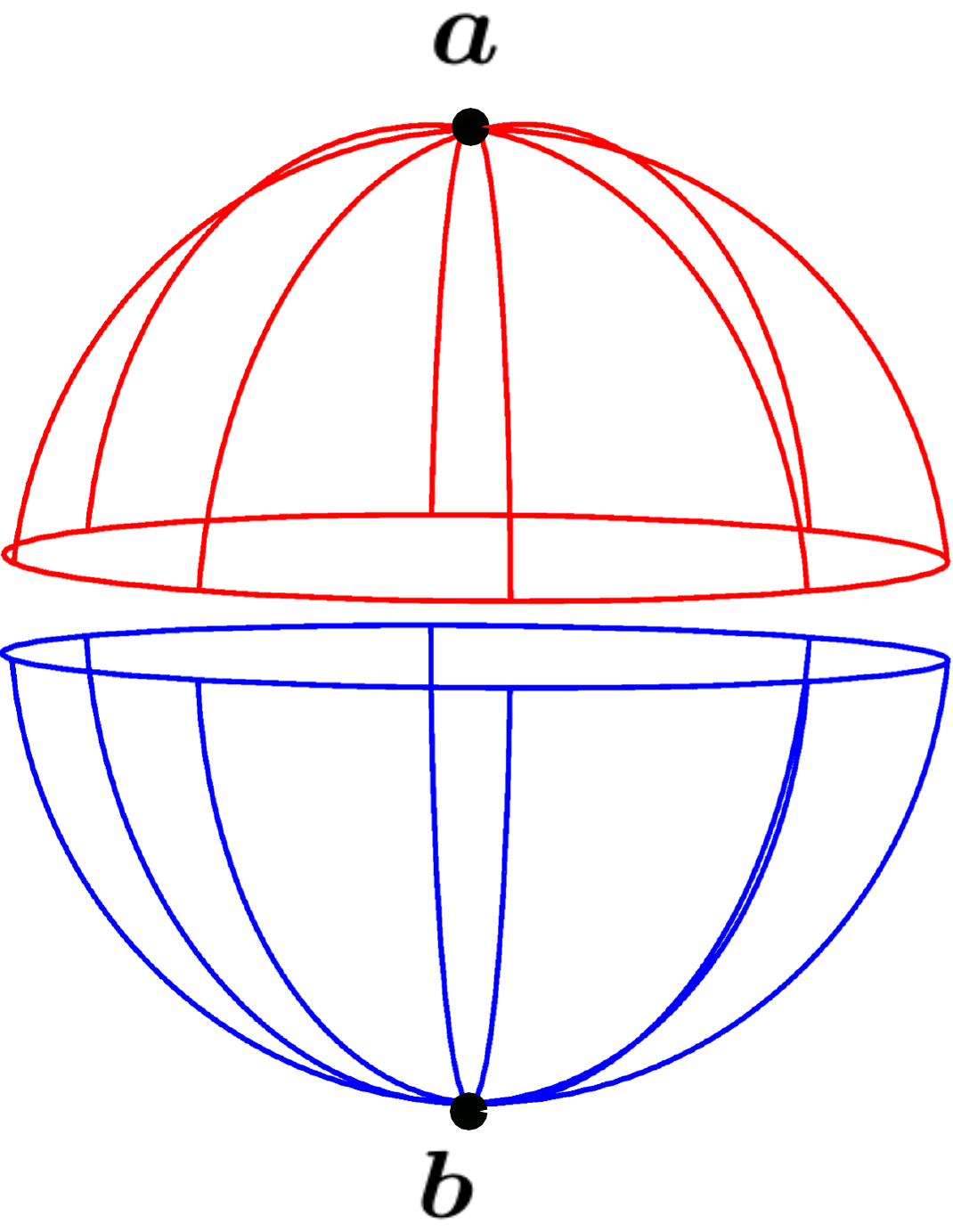}}
	\hspace{0.35in}
	\subfigure[]{\includegraphics[scale=0.29]{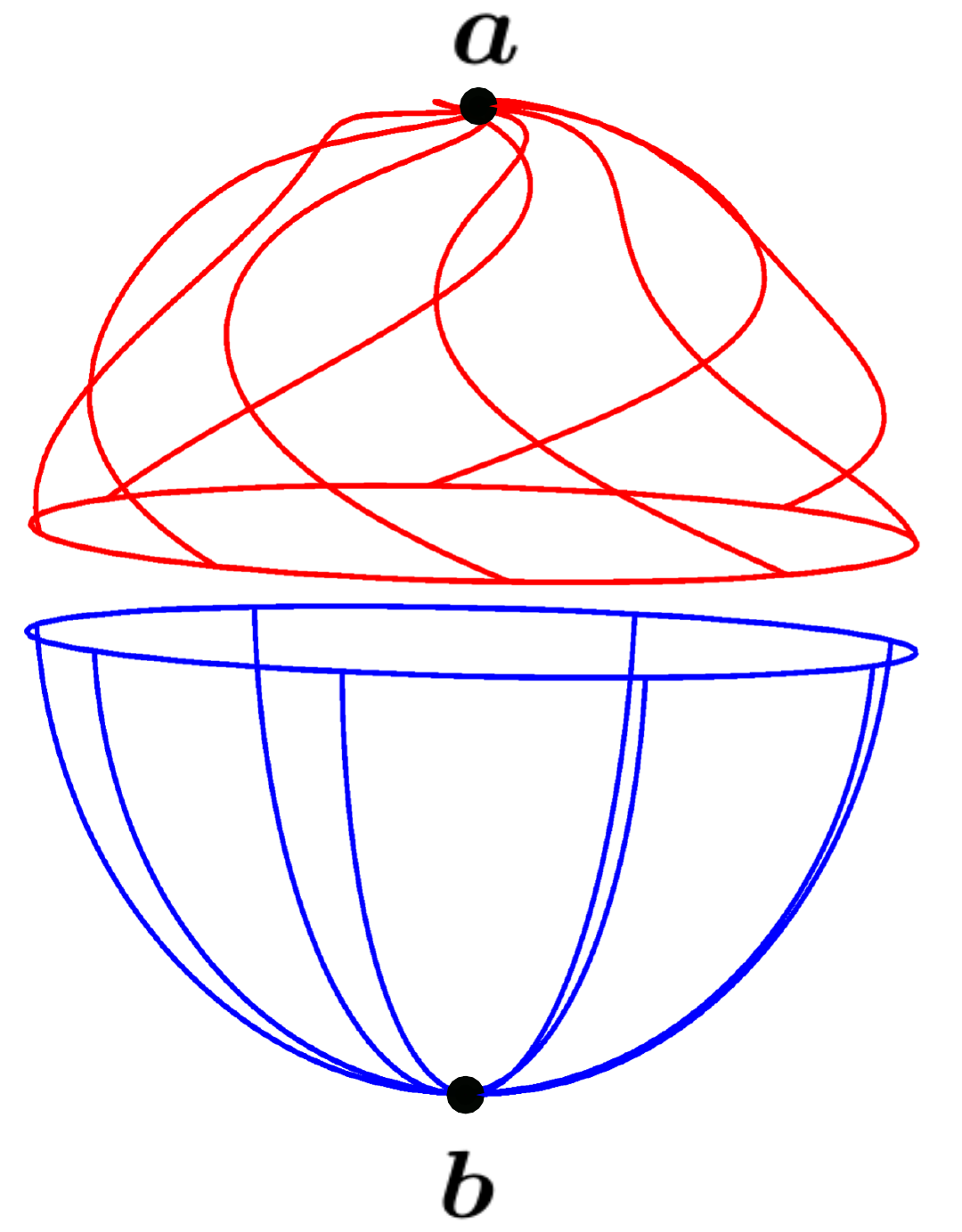}}
	\subfigure[]{\includegraphics[scale=0.285]{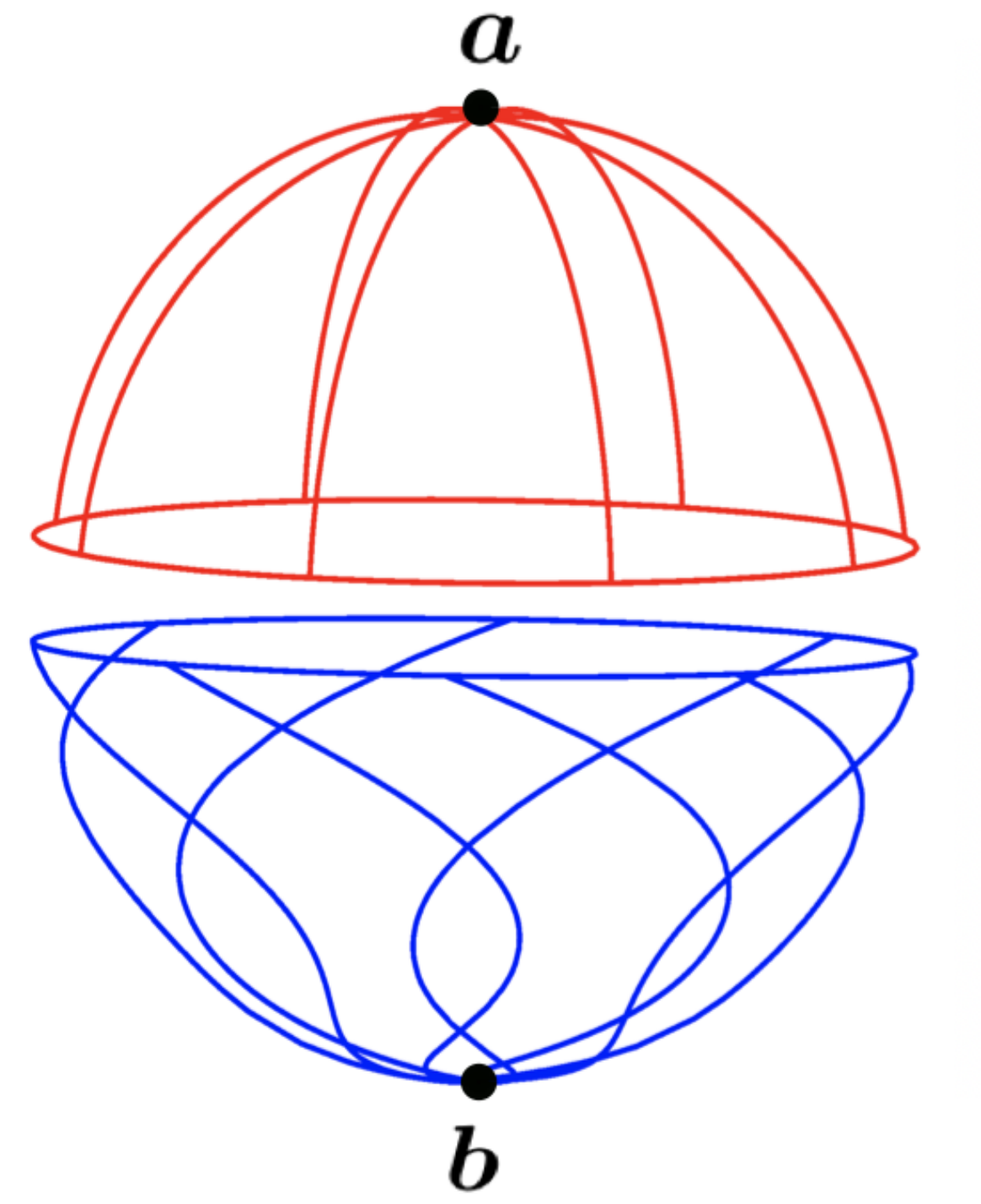}}
	\hspace{0.35in}
	\subfigure[]{\includegraphics[scale=0.29]{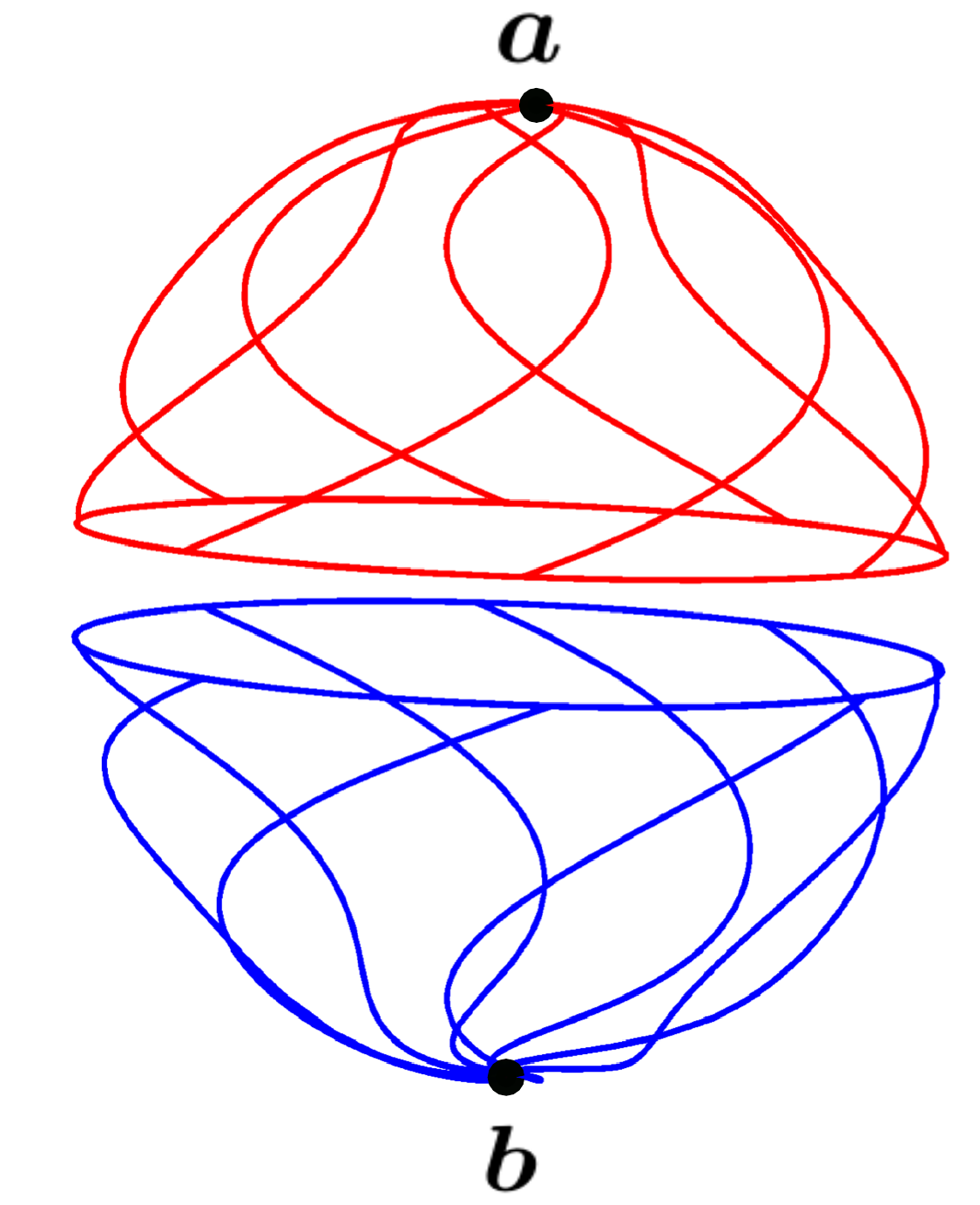}}
	\caption{Several trajectories $ \bar{\vec{x}}(p,\alpha) $ on two-dimensional unperturbed stable and unstable manifolds are given for four different situations. The case when no complex eigenvalues are present is shown in (a). The cases in which the complex conjugate eigenvalues are only on the unstable manifold (respectively only on the stable manifold), are given in (b) and (c). Finally, if
	the eigevalues at both $ \vec{a} $ and $ \vec{b} $ are complex-conjugate, we get the behavior 
	shown in (d).
	}
	\label{fig:hetero_unp}
\end{figure}

We now consider the implications of our theory in instances where the unperturbed system has a 
two-dimensional {\em heteroclinic} manifold.  This is the more `classical' setup of Melnikov theory.
However, rather than simply seeking a Melnikov function whose zeros imply persistence of a heteroclinic connection (which we will obtain), we do more:
\begin{itemize}
    \item Find ways of characterizing {\em lobes} created between perturbed stable and unstable manifolds,
    at each instance in time;
    \item Obtain a formula in terms of the Melnikov function for the volume of such a lobe;
    \item In this instance when the perturbation $ g $ is {\em harmonic}, that is when it can be written in the form $ \tilde{\vec{g}}(x) \cos \left[ \omega t + \phi \right] $, extend the two-dimensional theory of lobe dynamics via a turnstile \cite{romkedar,wigi_book} to three-dimensions;
    \item In the more general instance in which $ g $ has general time-dependence, extend the two-dimensional theory for instantaneous flux across the broken heteroclinic (formerly a flow barrier) to three dimensions;
    \item In the above instance, express the formula in simple terms in terms of the Melnikov function.
\end{itemize}
The last two of these issues is particularly important within the context of fluid flows: these are inevitably three-dimensional, and flow barriers are consequently two-dimensional.  By rationalizing an instantaneous flux across a flow barrier, we are quantifying an easily computable time-varying transport due to a perturbation which has general (aperiodic) time-dependence.

We assume that the smoothness hypotheses as stated in 
Section~\ref{sec:manifolds} continue to hold.  However,  assume now that unperturbed situation of the system (\ref{sys}) when $\epsilon = 0$ has two distinct fixed points $\vec{a}$ and $\vec{b}$, such that $ \vec{a} $ possesses a two-dimensional unstable manifold $ \Gamma^u $, and $ \vec{b} $ a two-dimensional stable manifold
$ \Gamma^s $.  Moreover, we assume that these
two manifolds coincide, to form a two-dimensional orientable heteroclinic manifold $ \Gamma $ as shown in Fig.~\ref{fig:hetero_unp}.  
Thus, a trajectory $ \bar{\vec{x}}^u(p,\alpha) $ on $ \vec{a} $'s unstable manifold coincides with a trajectory
$ \bar{\vec{x}}^s(p,\alpha) $ on $ \vec{b} $'s stable manifold.  We denote this heteroclinic trajectory by
$ \bar{\vec{x}}(p,\alpha) $, and insist that $ (p,\alpha) \in \R \times \mathrm{S}^1 $ be chosen
such that $ (p,\alpha) $ provides a $ \mathrm{C}^2 $-smooth parameterization of the heteroclinic manifold. Once
again, $ p $ can be thought of as the time-evolution along the trajectory, with $ \alpha $ choosing {\em which}
trajectory.  Thus we have the limiting behavior
\[
\lim_{p \rightarrow -\infty} \bar{\vec{x}}(p,\alpha) = \vec{a} \quad {\mathrm{and}} \quad 
\lim_{p \rightarrow \infty} \bar{\vec{x}}(p,\alpha) = \vec{b} \, 
\]
for each and every $ \alpha $.
We have shown the spiralling situation in Fig.~\ref{fig:hetero_unp}(d), corresponding to $ 
\lambda_1^u(\vec{a}) $ and $ \lambda_2^u(\vec{a}) $ (eigenvalues with the positive real parts obtained at $\vec{a}$) being complex conjugates of one another, as are
$ \lambda_1^s(\vec{b}) $ and $ \lambda_2^s(\vec{b}) $ (eigenvalues with the negative real parts obtained at $\vec{b}$).  However, the heteroclinic trajectories need not be spiralling
in general; that is, these sets of eigenvalues could be purely positive (for $ \vec{a} $) and purely negative (for $ \vec{b} $). This situation is shown in Fig.~\ref{fig:hetero_unp}(a).
Indeed, it is allowable to have complex eigenvalues only at one of the endpoints $ \vec{a} $ or $ \vec{b} $ and at the other endpoint, we can have purely positive or negative eigenvalues appropriately. These are given in Figs.~\ref{fig:hetero_unp}(b) and (c).

The unperturbed heterclinic manifold $ \Gamma $ is a closed surface, separating phase space into a part which is inside $ \Gamma $, and another which is outside.  To distinguish between the two, we will assume that the $ \alpha $-labelling in the
$ (p,\alpha) $ parametrization of $ \Gamma $ is chosen such that
\[
\frac{\partial \bar{\vec{x}}(p, \alpha)}{\partial p} \wedge \frac{\partial \bar{\vec{x}}(p, \alpha)}{\partial \alpha}
= \vec{f}(\bar{\vec{x}}(p, \alpha)) \wedge \bar{\vec{x}}_\alpha(p, \alpha)
\]
(which we know is normal to $ \Gamma $) points in the {\em outwards} direction to $ \Gamma $ at each point $ \vec{x}(p,\alpha) $ on $ \Gamma $. 

Now, when $ \epsilon \ne 0 $ but is small in size, we know that $ \vec{a} $ becomes a heteroclinic trajectory,
and its unstable manifold persists as a time-parameterized entity $ \Gamma^u_\epsilon(\vec{a}_\epsilon,t) $.
This can be parameterized by $ (p,\alpha,t) \in (-\infty,P^u] \times \mathrm{S}^1 \times (-\infty,T^u] $ for
any finite $ P^u $ and $ T^u $.  
 We note that we can take $ P^u $ as large as we like (but remaining finite), indicating that we cannot find an approximation to the manifold globally.  Intuitively, 
this means that a perturbed version of the top-hemisphere of Fig.~\ref{fig:hetero_unp} persists; we may 
approach the south pole, but begin to lose control as we do so.  The manifold may continue beyond this region, and if
so, our theory is not able to approximate it. Similarly, the stable manifold of $ \vec{b} $
persists as $ \Gamma^s_\epsilon(\vec{b}_\epsilon,t) $, and we can parameterize this by $(p,\alpha,t) \in [P^s,\infty) \times 
\mathrm{S}^1 \times [T^s, \infty) $ for any finite $ P^s $ and $ T^s $, which we can choose to be as negative as we like.

\begin{figure}[t]
	\centering
	\includegraphics[scale=0.45]{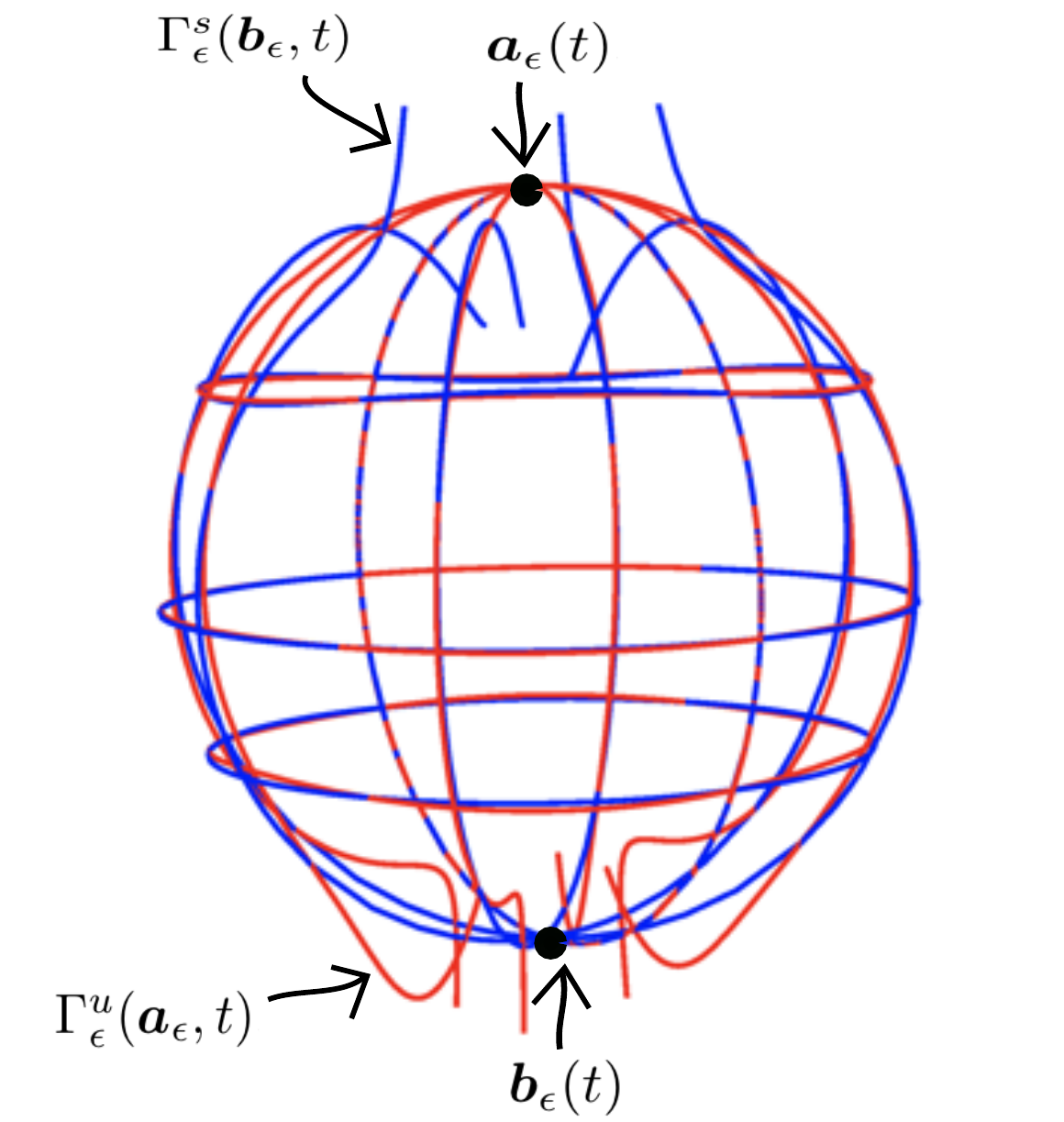}
	\caption{A generic intersection pattern of the perturbed unstable ($ \Gamma_\epsilon^u(\vec{a}_\epsilon,t) $, red) and stable ($ \Gamma_\epsilon^s(\vec{b}_\epsilon,t) $, blue) manifolds of the Hill's spherical vortex with no swirl at time $ t=1 $ and $\eps=0.1$. 
	}
	\label{fig:hetero_pert}
\end{figure}

Take a time $ t \in [T^s, T^u] $. The unstable manifold of $ \vec{a}_\epsilon $
for time in $ (-\infty, t] $ and the stable manifold of $ \vec{b}_\epsilon $ for time in $ [t ,\infty) $ can both be approximated using our previous results.  The manifolds at time $ t $ no longer need to
coincide, and we show a generic situation in  Fig.~\ref{fig:hetero_pert}. There is now a perturbed version of $ \bar{\vec{x}}(p,\alpha) $ on $ \Gamma_\epsilon^u(\vec{a}_\epsilon,t) $, which we can call $ \vec{x}^u(p,\alpha,\epsilon,t) $, 
which exists for $ p \in (-\infty, P^u] $.  Similarly, there exists a perturbed version of $ \bar{\vec{x}}(p,\alpha) $ on
$ \Gamma_\epsilon^s(\vec{b}_\epsilon,t) $, which we call $ \vec{x}^s(p,\alpha,\epsilon,t) $, which exists for
$ p \in [P^s, \infty) $.  

Given a location $ (p,\alpha) $ on $ \Gamma $, we first want to quantify the displacement of
between the perturbed unstable and stable manifolds, in the direction normal to $ \Gamma $:
\begin{equation}
	d(p,\alpha,\epsilon,t) = \frac{ \vec{f}(\bar{\vec{x}}(p, \alpha)) \wedge \bar{\vec{x}}_\alpha(p, \alpha)}{\lvert \vec{f}(\bar{\vec{x}}(p, \alpha)) \wedge \bar{\vec{x}}_\alpha(p, \alpha)\rvert} \cdot \left[\vec{x}^{u}(p,\alpha,\epsilon,t)- \vec{x}^{s}(p,\alpha,\epsilon,t)\right] 
	\quad , \quad (p,\alpha,t)  \in [P^s,P^u] \times \mathrm{S}^1 \times [T^s,T^u] \, .
\label{eq:distance}
\end{equation}  
Given our choice of labelling of the unperturbed heteroclinic trajectories on $ \Gamma $, we note that
a positive $ d(p,\alpha,\epsilon,t) $ implies that the unstable manifold is {\em outside} the stable manifold, while a negative
$ d $ means that the stable manifold is outside the unstable one at a location $ \bar{\vec{x}}(p,\alpha) $
at a time instance $ t $.

\begin{theorem}[Heteroclinic splitting]
For $ (p,\alpha,t)  \in [P^s,P^u] \times  \mathrm{S}^1 \times [T^s,T^u] $, 
the distance (\ref{eq:distance}) can be expanded in $\epsilon$ in the form 
\begin{equation}
	d(p,\alpha,\epsilon,t) = \epsilon \frac{ M(p, \alpha, t)}{\lvert \vec{f}(\bar{\vec{x}}(p, \alpha)) \wedge \bar{\vec{x}}_\alpha(p, \alpha)\rvert} + \mathcal{O}(\epsilon^2),
	\label{eq:distancemelnikov}
	\end{equation} 
where the Melnikov function is the convergent improper integral 
\begin{equation}
	M(p,\alpha,t) = \int^{\infty}_{-\infty} \exp\left[{\int_{\tau}^p \nabla \cdot \vec{f}(\bar{\vec{x}}(\xi, \alpha)) d\xi}\right] \left[ \vec{f}(\bar{\vec{x}}(\tau, \alpha)) \wedge \bar{\vec{x}}_{\alpha}(\tau, \alpha)\right] \cdot \vec{g}(\bar{\vec{x}}(\tau, \alpha),\tau + t -p)~ \d \tau \, .
\label{eq:melnikov}
\end{equation}
\label{theorem:heteroclinic} 
\end{theorem}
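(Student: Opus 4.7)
The plan is to derive Theorem~\ref{theorem:heteroclinic} essentially as a corollary of Theorems~\ref{theorem:melnikov_unstable} and \ref{theorem:melnikov_stable}, by writing the stable–unstable splitting distance as a difference of the two one-sided displacements already computed, both measured along the same normal direction at $\bar{\vec{x}}(p,\alpha)$.

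First, I would observe that because $\Gamma$ is heteroclinic, the unperturbed trajectories coincide: $\bar{\vec{x}}^u(p,\alpha)=\bar{\vec{x}}^s(p,\alpha)=\bar{\vec{x}}(p,\alpha)$ for $p\in\R$ and $\alpha\in\mathrm{S}^1$. Consequently, the unit normal appearing in \eqref{eq:unstable_distance} and \eqref{eq:distance_stable} is the \emph{same} vector,
\[
\hat{\vec{n}}(p,\alpha) \;=\; \frac{\vec{f}(\bar{\vec{x}}(p,\alpha))\wedge\bar{\vec{x}}_\alpha(p,\alpha)}{\lvert\vec{f}(\bar{\vec{x}}(p,\alpha))\wedge\bar{\vec{x}}_\alpha(p,\alpha)\rvert},
\]
as the one in \eqref{eq:distance}. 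Writing $\vec{x}^u-\vec{x}^s=(\vec{x}^u-\bar{\vec{x}})-(\vec{x}^s-\bar{\vec{x}})$ and dotting with $\hat{\vec{n}}$ then gives the simple identity $d(p,\alpha,\epsilon,t)=d^u(p,\alpha,\epsilon,t)-d^s(p,\alpha,\epsilon,t)$.

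Next, I would invoke the two earlier theorems directly. Since we have restricted $(p,\alpha,t)\in[P^s,P^u]\times\mathrm{S}^1\times[T^s,T^u]$, the hypotheses of Theorem~\ref{theorem:melnikov_unstable} are satisfied for $d^u$ (with $P=P^u$, $T=T^u$) and those of Theorem~\ref{theorem:melnikov_stable} for $d^s$ (with $P=P^s$, $T=T^s$). Substituting the expansions \eqref{eq:distance_unstable} and \eqref{eq513} and using that the normalizing factor in the denominator is common, we obtain \eqref{eq:distancemelnikov} with $M=M^u-M^s$. The integrands in \eqref{eq:melnikov_unstable} and \eqref{eq:melnikov_stable} are structurally identical once $\bar{\vec{x}}^u$ and $\bar{\vec{x}}^s$ are replaced by the common $\bar{\vec{x}}$, and the minus sign in front of $M^s$ flips its domain of integration $[p,\infty)$ into $+\int_p^{\infty}$, so concatenating the two integrals yields exactly the single improper integral \eqref{eq:melnikov} over all of $\R$.

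The only real work is justifying that the concatenated integral $\int_{-\infty}^{\infty}$ converges, but this is already guaranteed: Theorem~\ref{theorem:melnikov_unstable} tells us that the tail at $\tau\to-\infty$ converges (thanks to $\bar{\vec{x}}(\tau,\alpha)\to\vec{a}$ combined with the exponential factor dominating the linearized decay at $\vec{a}$, and the boundedness of $\vec{g}$), while Theorem~\ref{theorem:melnikov_stable} provides the analogous convergence at $\tau\to+\infty$ using the approach to $\vec{b}$. The integrand is continuous in $\tau$ on the bounded middle region $[P^s,P^u]$, so no singularity is introduced at the join. The main conceptual obstacle to flag is thus not the computation but ensuring that the two one-sided Melnikov integrals are genuinely pieces of a single convergent integral on $\R$; this is immediate here because the heteroclinic coincidence of $\bar{\vec{x}}^u$ and $\bar{\vec{x}}^s$ on all of $\R$ makes their integrands identical functions of $\tau$, so no matching condition at $\tau=p$ is required. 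With these observations assembled, the expansion \eqref{eq:distancemelnikov}–\eqref{eq:melnikov} follows, and the $\mathcal{O}(\epsilon^2)$ error term is simply the sum of the two $\mathcal{O}(\epsilon^2)$ errors inherited from Theorems~\ref{theorem:melnikov_unstable} and \ref{theorem:melnikov_stable}.
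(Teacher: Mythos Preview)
Your proposal is correct and follows essentially the same approach as the paper: write $d=d^u-d^s$ using the common normal $\hat{\vec{n}}$ at $\bar{\vec{x}}=\bar{\vec{x}}^u=\bar{\vec{x}}^s$, invoke Theorems~\ref{theorem:melnikov_unstable} and~\ref{theorem:melnikov_stable} on the appropriate parameter ranges, and concatenate $M^u-M^s$ into the single integral~\eqref{eq:melnikov}. Your additional remarks on convergence of the bi-infinite integral are a nice clarification but not strictly needed beyond what the two earlier theorems already guarantee.
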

\begin{proof}
	See Appendix~\ref{sec:melnikov}.
\end{proof}

Note that the distance function $ d $ in (\ref{eq:distancemelnikov}) at fixed $ t $ can be thought of in the sense
of first taking a point on $ \Gamma $ parametrized by $ (p,\alpha) $, i.e., the point 
$ \bar{\vec{x}}(p,\alpha) $, drawing an outward-pointing normal vector to $ \Gamma $ at that point, and determining the signed distance along that normal vector. Thus, we can think of {\em projecting} this
distance information between perturbed stable and unstable manifolds on to $ \Gamma $.  

\begin{remark}[Shifts of heteroclinic points]
{\em 
Each heteroclinic intersection point simply `shift along' the $ p $-location as time $ t $
is varied in forward and backward time, because of the limiting behavior expressed by (\ref{eq:hetero_limit}). Indeed, (\ref{eq:melnikov}) shows that
\[
M(\tilde{p}+t,\tilde{\alpha},\tilde{t}+t) = \exp\left[{\int_{\tilde{p}}^{\tilde{p}+t} \nabla \cdot \vec{f}(\bar{\vec{x}}(\xi, \alpha)) d\xi}\right]  \,
M(\tilde{p},\tilde{\alpha},\tilde{t}) 
\]
for any time-shift $ t $, and hence if there is a zero at a time $ \tilde{t} $ at a location encoded by
the parameter $ \tilde{p} $, then there is correspondingly a zero at the shifted time $ \tilde{t} + t $ at a parameter value $ \tilde{p}+t $, at the same $ \tilde{\alpha} $ value (i.e., traversing along the same unperturbed heteroclinic trajectory).  Effectively, tracking the location of this point with time
gives the location subtended on $ \Gamma $ of the corresponding perturbed heteroclinic 
trajectory as a function of time $ t $.
This fact is illustrated most strikingly for the volume-preserving case, since $ M $ will depend on $ (p,t) $ not independently,
but in terms of the shift $ (p-t) $.
}
\label{remark:heteroclinicshift}
\end{remark}

\begin{remark}[Transverse intersection points]
{\em 
At a fixed value $ \tilde{t} $ of time, a transverse intersection between $ \Gamma_\epsilon^u(\vec{a}_\epsilon) $ and
$ \Gamma_\epsilon^s(\vec{b}_\epsilon) $ near a point $ \bar{\vec{x}}(p,\alpha) $ is guaranteed if $ M(p,\alpha,t) $
has a simple zero with respect to $ (p,\alpha) $; that is, if there exists $ (\tilde{p},\tilde{\alpha}) $ such that
$ M(\tilde{p},\tilde{\alpha},\tilde{t}) = 0 $ and 
\begin{equation}
\left| \frac{\partial M}{\partial p} (\tilde{p},\tilde{\alpha},\tilde{t})\right| + \left| \frac{\partial M}{\partial \alpha} (\tilde{p},\tilde{\alpha},\tilde{t})\right| \ne 0 \, .
\label{eq:transverse}
\end{equation}
This is a standard consequence of implicit function theorem type arguments in this setting (see \cite{arrowsmith_place,nonosci}).  Each such intersection point $ \vec{x}(\tilde{p},\tilde{\alpha},\tilde{t}) $ corresponds
to a heteroclinic trajectory, i.e., the trajectory $ \vec{x}(\tilde{p},\tilde{\alpha},t) $ passing through
$ \vec{x}(\tilde{p},\tilde{\alpha},\tilde{t}) $ at time $ \tilde{t} $ satisfies
\begin{equation}
\lim_{t \rightarrow -\infty} \left| \vec{x}(\tilde{p},\tilde{\alpha},t) - \vec{a}_\epsilon(t) \right| = 0 \quad {\mathrm{and}} \quad
\lim_{t \rightarrow \infty} \left| \vec{x}(\tilde{p},\tilde{\alpha},t) - \vec{b}_\epsilon(t) \right| = 0 \, .
\label{eq:hetero_limit}
\end{equation}
While this is a generic result in standard Melnikov developments, having an explicit form within the integral in (\ref{eq:melnikov}) in a dimension larger than $ 2 $ without a Hamiltonian structure is new.
}
\label{remark:transverse}
\end{remark}

\begin{remark}[Curves of heteroclinic points]
{\em 
Suppose there exists continuously differentiable functions $ \tilde{\alpha}(s) $ and $ \tilde{p}(s) $ for
$ s \in (0,1) $ such that at a fixed $ t $
\[
M\left( \tilde{p}(s),\tilde{\alpha}(s), t\right) = 0 \quad \mathrm{and}
\quad \vec{\nabla}_{p,\alpha} M \left( \tilde{p}(s),\tilde{\alpha}(s)\right) \ne \vec{0} 
\quad \mathrm{for~all}~s \in (0,1) \quad \, , 
\]
where $  \vec{\nabla}_{p,\alpha} $ is the two-dimensional gradient with respect to $ (p,\alpha) $. Assume
moreover that there exists a constant $ H $ such that
\[
 \sup_{s \in (0,1)} \left| 
\vec{\nabla}_{p,\alpha} M \left( \tilde{p}(s),\tilde{\alpha}(s)\right)  \right| < H \, .
\]
The parametrization $ \left( \tilde{p}(s), \tilde{\alpha}(s) \right) $
represents a non-degenerate {\em curve} of points, $ Q $, parametrized by $ s \in (0,1) $ and at the locations $ \bar{\vec{x}} \left( \tilde{p}(s),
\tilde{\alpha}(s) \right) $, along which $ M $ is zero.  (This is the generic expectation for the 
intersection between the two two-dimensional perturbed manifolds.) Applying the Banach space version of the
implicit function theorem to $ d(p,\alpha,\epsilon,t)/\epsilon $ then gives the persistence of a $ \mathrm{C}^1 $-curve $ Q^\star $ for small enough $ \left| \epsilon \right| $, which is
 associated with the parametrization $ \left( p^\star(s), \alpha^\star(s) \right) $ which is 
 $ {\mathcal O}(\epsilon)$-close to $ \left( \tilde{p}(s), \tilde{\alpha}(s) \right) $.  Note that
 the same implication arises if $ s \in \mathrm{S}^1 $, i.e., we think of $ Q $ as a {\em closed} curve.
 In other words,
 these conditions imply the presence of a curve of heteroclinic points $ Q^\star $ which is 
 $ {\mathcal O}(\epsilon)$-close
 to that predicted via the Melnikov function.  Thus, each point on $ Q^\star $ obeys Remark~\ref{remark:transverse}, and is an `initial condition' for a heteroclinic trajectory of the 
 time-varying problem which satisfies the conditions (\ref{eq:hetero_limit}).
}
\label{remark:heterocliniccurves}
\end{remark}

A possible intersection between a perturbed stable manifold $\Gamma_{\epsilon}^s\left(\vec{b}_\epsilon, t\right)$ and perturbed unstable manifold $\Gamma_{\epsilon}^u\left(\vec{a}_\epsilon, t\right)$ at a fixed time $ t $ is shown in Fig.~\ref{fig:hetero_pert}. Here, we have used red and blue colors to represent the perturbed unstable and stable manifolds respectively, a convention we will follow in the remainder of this paper.  We recall 
that in two-dimensional flows in which a one-dimensional heteroclinic splitting is captured via a Melnikov function,
the `lobe area' between two adjacent intersections of the split manifolds can be obtained by integrating the
Melnikov function \cite{romkedar,wigi_book}.  An analogous result is available in the present three-dimensional setting.
If the Melnikov function $ M $ has a closed curve $ Q $ on $ \Gamma $ along which $ M $ has non-degenerate zeros as explained in Remark~\ref{remark:heterocliniccurves}, then there is a $ {\mathcal O}(\epsilon) $-close closed curve $ Q^\star $ on 
$ \Gamma $ which corresponds to the normal projection on to $ \Gamma $ of the intersection ring between
perturbed stable and unstable manifolds.  If $ M $ is sign definite inside $ Q $, the interior of $ Q^\star $ is the `shadow' of the one lobe which is generated by this intersection, i.e., the projection of the lobe
on to $ \Gamma $ along normal vectors to $ \Gamma $.  Then, we can give an expression for the lobe volume in terms of the Melnikov function:

\begin{theorem}[Lobe volume]
Let $ t \in [T^s,T^u] $ be fixed, and suppose that there is an open region $ R $ on $ \Gamma $ in which $ M(p,\alpha,t) $ is sign definite, and such that $ M = 0 $ and $ \vec{\nabla}_{p,\alpha} M \ne \vec{0} $
at all points on its boundary $ Q $. Note that $ Q $ will generically consist of a finite number of
closed curves.  Let $ R' $ be the region in $ (p,\alpha) $-space corresponding to $ R $.  The region between the perturbed stable and manifolds which is subtended by $ R $ is a {\em lobe}, whose volume is given by
\begin{equation}
{\mathrm{Lobe~volume}} \, = \epsilon \int \! \! \! \! \int_{R'} \left| M(p,\alpha,t) \right| \, \d p \, \d \alpha + {\mathcal O}(\epsilon^2) \, .
\label{eq:lobevolume}
\end{equation}
\label{theorem:lobevolume}
\end{theorem}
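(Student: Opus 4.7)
My plan is to realize the lobe as a three-dimensional region swept out over $R$ in $(p,\alpha)$-space as a parameter $\sigma$ moves from the perturbed stable manifold to the perturbed unstable manifold along the common normal direction, and then compute the volume directly via a Jacobian calculation.

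First, using the approximations (\ref{eq:unstableapprox}) and (\ref{eq:stableapprox}) together with the splitting distance (\ref{eq:distancemelnikov}), I would introduce the parametrization
\[
\vec{r}(p,\alpha,\sigma;t) = \bar{\vec{x}}(p,\alpha) + \sigma \, \epsilon \, M(p,\alpha,t) \, \frac{\vec{f}(\bar{\vec{x}}(p,\alpha)) \wedge \bar{\vec{x}}_\alpha(p,\alpha)}{|\vec{f}(\bar{\vec{x}}(p,\alpha)) \wedge \bar{\vec{x}}_\alpha(p,\alpha)|^2} + \O{\epsilon^2},
\]
for $(p,\alpha) \in R$ and $\sigma \in [0,1]$, so that $\sigma = 0$ lies on $\Gamma_\epsilon^s$ and $\sigma = 1$ lies on $\Gamma_\epsilon^u$. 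Because $M$ is sign-definite on the interior of $R$ and vanishes with simple zeros on $\partial R$, this sweep traces out the lobe exactly once and collapses smoothly to the intersection curve at the boundary.

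Next, I would compute the Jacobian triple product $\vec{r}_p \cdot (\vec{r}_\alpha \wedge \vec{r}_\sigma)$ at leading order. With $\vec{r}_p = \vec{f}(\bar{\vec{x}}) + \O{\epsilon}$, $\vec{r}_\alpha = \bar{\vec{x}}_\alpha + \O{\epsilon}$, and $\vec{r}_\sigma = \epsilon M (\vec{f}\wedge\bar{\vec{x}}_\alpha)/|\vec{f}\wedge\bar{\vec{x}}_\alpha|^2 + \O{\epsilon^2}$, the Lagrange/BAC-CAB identity $\vec{u} \cdot [\vec{v} \wedge (\vec{u} \wedge \vec{v})] = |\vec{u}\wedge\vec{v}|^2$ (with $\vec{u} = \vec{f}$, $\vec{v} = \bar{\vec{x}}_\alpha$) gives
\[
\vec{r}_p \cdot (\vec{r}_\alpha \wedge \vec{r}_\sigma) = \epsilon \, M(p,\alpha,t) + \O{\epsilon^2},
\]
so integrating the absolute value yields
\[
V = \int \! \! \! \int_R \int_0^1 |\vec{r}_p \cdot (\vec{r}_\alpha \wedge \vec{r}_\sigma)| \, \d\sigma \, \d p \, \d\alpha = \epsilon \int \! \! \! \int_R |M(p,\alpha,t)| \, \d p \, \d\alpha + \O{\epsilon^2}.
\]
Sign-definiteness of $M$ on the interior of $R$ means $|\epsilon M + \O{\epsilon^2}| = \epsilon |M| + \O{\epsilon^2}$ wherever $|M|$ is bounded below; in the thin strip near $\partial R$ where $|M| = \O{\epsilon}$, the simple-zero hypothesis forces the strip width to be $\O{\epsilon}$, so its contribution to the volume is at most $\O{\epsilon^3}$, harmlessly absorbed in the error term.

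The principal technical obstacle is verifying that this interpolating parametrization is genuinely a bijection onto the lobe, despite the $\O{\epsilon^2}$ tangential corrections flagged in Remark~\ref{remark:unstableapprox}; this should follow for sufficiently small $\epsilon$ from compactness of $\overline{R}$, the simple-zero condition on $\partial R$, and an implicit function theorem argument paralleling that invoked in Remark~\ref{remark:transverse}. A secondary concern is that $\vec{f}(\bar{\vec{x}}(p,\alpha)) \wedge \bar{\vec{x}}_\alpha(p,\alpha)$ must remain nonvanishing on $\overline{R}$ so that the normal direction, and hence the parametrization, is nondegenerate; this is already guaranteed by the $\mathrm{C}^1$-smooth foliation hypothesis imposed on the heteroclinic manifold in Section~\ref{sec:heteroclinic}.
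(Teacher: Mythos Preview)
Your proposal is correct and follows essentially the same approach as the paper's own proof: both compute the lobe volume as ``normal separation times surface area element on $\Gamma$,'' with the factor $|\vec{f}\wedge\bar{\vec{x}}_\alpha|$ in the area element cancelling against the same factor in the denominator of the distance formula to leave $\epsilon\,|M|\,\d p\,\d\alpha$. The paper simply writes this as $\int\!\!\int_R |d|\,|\vec{dS}|$ and substitutes, whereas you make the same computation explicit via a three-dimensional $(p,\alpha,\sigma)$-parametrization and Jacobian; your version is slightly more careful about the bijection and boundary-strip issues, which the paper does not discuss. One small imprecision: with your parametrization as written, $\sigma=0$ lands on $\Gamma$ rather than on $\Gamma^s_\epsilon$ (the correct interpolation would carry an extra $\epsilon M^s$ offset), but since $\vec{r}_\sigma$ depends only on the difference $M=M^u-M^s$, the Jacobian and hence the leading-order volume are unaffected, and the discrepancy is absorbed in the ${\mathcal O}(\epsilon^2)$ term you already allow.
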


\begin{proof}
	See Appendix~\ref{sec:lobevolume}.
\end{proof}

\begin{remark}
{\em The fact that a lobe volume can be represented to leading-order by an integral of the Melnikov function over the $ p $-variable (representing the time-parametrization along a heteroclinic trajectory) is exactly analogous to a well-known similar result for lobe areas in the two-dimensional situation
\cite{romkedar,wigi_book}.  We point out that it is {\em not} necessary to impose additional hypotheses
such as volume-preservation or time-periodicity for this result to be true; one just needs to know the $ (p,\alpha) $-region associated with the particular lobe of interest.
}
\end{remark}
 
 In the unperturbed situation, the stable and unstable manifold coincide to form a heteroclinic manifold,
 which separates the `inside' and the `outside' flow regimes.  However, after perturbation this entity splits into a stable and unstable manifold, and consequently is no longer a flow separator.  Hence, transport will now occur across the previously impermeable structure.  Understanding the lobe volume would seem to be relevant in quantifying this transport.  We show in Section~\ref{sec:lobedynamics} how the two-dimensional theory of lobe dynamics \cite{romkedar,wigi_book} can be extended to our situation, to allow for a lobe volume to quantify transport.  However, a lobe volume can only be unambiguously assigned as a measure of transport under several additional assumptions, including a restrictive type of time-periodicity of the perturbation $ \vec{g} $, and volume preservation of the unperturbed flow.  In more general situations, a lobe volume cannot be used to quantify transport, because it turns out that there may be either many different-sized lobes, or no lobes at all.  In this case, an alternative approach, which characterizes an instantaneous flux as a time-varying quantity, is necessary.  This more general approach is described in Section~\ref{sec:flux}, which builds on a similar approach in two dimensions \cite{aperiodic,rossbyflux,siam_book}.

\subsection{Lobe dynamics under additional conditions}
\label{sec:lobedynamics}

Lobe dynamics \cite{romkedar,wigi_book} is a well-established theory for describing and quantifying
transport across a broken heteroclinic manifold in two dimensions, under certain conditions. The principal
assumption is that the perturbation is periodic in time, and that there is an 
intersection between the stable and unstable manifolds at a time $ t $, at a point $ \vec{q} $.  In this section, we show how under similar assumptions, we can express transport in our three-dimensional situation as a direct
analog of lobe dynamics.

\begin{figure}[t]
	\centering
	\includegraphics[width=92mm,height=92mm,scale=0.7]{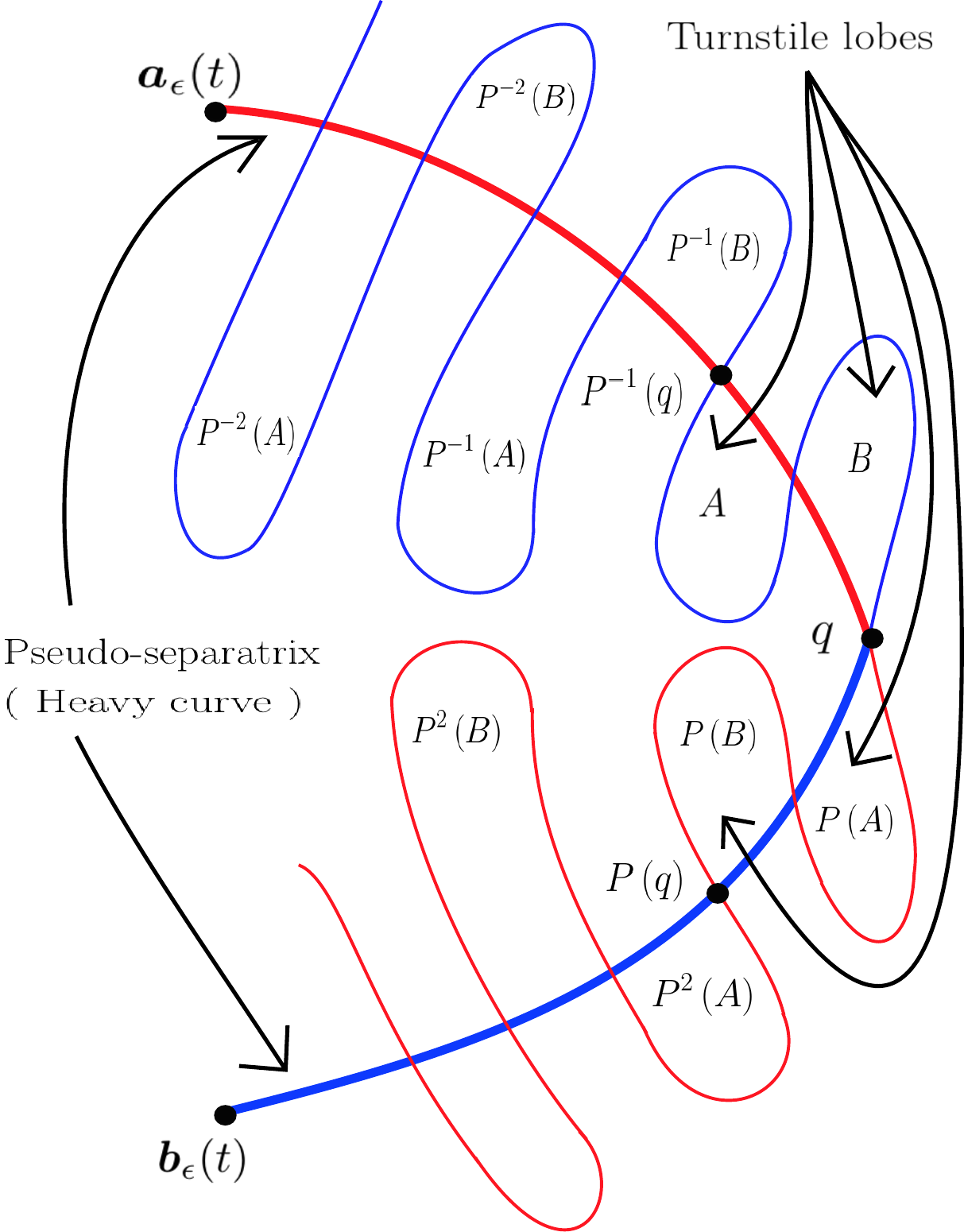}
	\caption{The two-dimensional lobe dynamics scenario \cite{romkedar,wigi_book}, occurring due to the
	intersection of the perturbed unstable manifold of $ \vec{a}_\epsilon(t) $ (red) and stable manifold of
	$ \vec{b}_\epsilon(t) $ (blue) at at a fixed time $ t $.  It is only the {\em turnstile lobes} between $ P^{-1}(\vec{q}) $ and $ P(\vec{q}) $ which are involved in crossing the pseudo-separatrix (heavy curve) per iteration of the Poincar\'{e} map. }
	\label{fig:2dlobes}
\end{figure}

It will help to briefly describe the elegant two-dimensional theory \cite{romkedar,wigi_book} first.  Suppose the perturbation is periodic in time (with period $ T $), and consider the manifold intersection pattern at a fixed time $ t $.  This picture is drawn under the assumption that the perturbation is {\em time-harmonic}, 
that is, can be written in the form $ \tilde{\vec{g}} \cos \left[ \omega t + \phi \right] $ for constant
frequency $ \omega \ne 0 $ and phase $ \phi $.  It turns out that then the (two-dimensional) Melnikov
function which captures intersections is itself sinusoidal with frequency $ \omega $, and the implication
is that the intersection of the perturbed unstable manifold in relation to the stable one is topologically
equivalent to the intersection of the sinusoidal function with the horizontal axis.
Thus there are infinitely many isolated transverse intersections. (This by itself does not imply chaotic motion in the heteroclinic situation; additional geometric conditions may be
necessary \cite{bertozzi,article_chaosdg}.  If homoclinic, though, the Smale-Birkhoff theorem 
\cite[e.g.]{arrowsmith_place} implies chaotic dynamics.)

Next, define a `pseudo-separatrix' at a chosen time $ t $ as follows: join the unstable manifold ($ \Gamma_\epsilon^u(\vec{a}_\epsilon,t) $, red) emanating from $ \vec{a}_\epsilon(t) $ up to a transverse intersection point, $ \vec{q} $,  to the stable manifold ($ \Gamma_\epsilon^s(\vec{b}_\epsilon,t) $, blue) emanating
from $ \vec{b}_\epsilon(t) $ (shown by the heavy curve in Fig.~\ref{fig:2dlobes}).  The intersection point $ \vec{q} $ corresponds to a heteroclinic point, because it lies on both manifolds.  What this means
is that if choosing an `initial' condition at this point at time $ t $, and defining $ \vec{x}(\tau) $ as being the trajectory going along this point which therefore obeys $ \vec{x}(t) = \vec{q} $, then
$ \left| \vec{x}(\tau) - \vec{a}_\epsilon(\tau) \right| $ decays to zero as $ \tau \rightarrow - \infty $, and moreover $ \left| \vec{x}(\tau) - \vec{b}_\epsilon(\tau) \right| $ decays to zero as $ \tau \rightarrow \infty $.  Now, one considers a Poincar\'{e} map $ P $ of time $ T  = 2 \pi / \omega $ (the period of the perturbation) on the phase space at the fixed time $ t $.  Given the time-periodicity of the flow, the phase space curves will be exactly the same at times $ t + n T $, for $ n \in {\mathbb Z} $.
Every intersection point in Fig.~\ref{fig:2dlobes}) must map to another under this map, since being on both the stable and the unstable manifolds is an invariant property with respect to the map.  Thus, the points $ P^{-1}(\vec{q}) $ and $ P(\vec{q}) $ are themselves intersection points in Fig.~\ref{fig:2dlobes}, and the region between $ P^{-1}(\vec{q} $ and $ \vec{q} $ (and similarly between $ \vec{q} $ and $ P(\vec{q}) $ must possess an
intersection pattern topologically equivalent to that of a sinusoidal curve intersecting the horizontal axis over one period.  The important region consists of the parts of the stable and unstable manifolds lying between $ P^{-1}(\vec{q}) $ and $ \vec{q} $, and also between $ \vec{q} $ and $ P(\vec{q}) $. With respect to the pseudo-separatrix, it is only the lobes in these regions (i.e., $ A $ and $ B $ in Fig.~\ref{fig:2dlobes}) which are involved in crossing the pseudo-separatrix under the action of $ P $ \cite{romkedar,wigi_book}
(or $ P^{-1} $). Consequently, determining the areas of these lobes gives a nice assessment of transport.

When is the lobe area a well-defined measure of transport?  There are two important assumptions to make
this work.  The first is that the perturbation $ \vec{g} $ is {\em time-harmonic}, which has already
been discussed. This assumption is needed
to ensure that the intersection pattern between $ P^{-1}(\vec{q}) $ and $ \vec{q} $ is topologically that of 
a sinusoidal function over one period, and also that the lobes $ A $ and $ B $ possess the same area
to leading-order (this can be shown using the pleasing connection between the Melnikov function and
lobe areas \cite{romkedar,wigi_book}).   The second assumption is that at least the unperturbed flow is area-preserving.  This ensures
that when a lobe gets mapped, it maps into a lobe of equal area to leading-order.  (If the perturbation
is also area-preserving, then the lobes must have equal areas, and not just to leading-order.) Consequently, {\em the} lobe area (or its leading-order expression) can be used as a well-defined measure of the transport that occurs, because all four of the turnstile lobes must
have the same area (at least to leading-order).    It is somewhat less well-known that if the time-periodicity is more general (i.e., not harmonic, but still periodic), a variety of possibilities exist: there may be many, differently-sized lobes between 
$ P^{-1}(\vec{q}) $ and $ \vec{q} $, or there may be {\em no} intersection points $ \vec{q} $ \cite{periodic}.
Understanding and quantifying transport via a Poincar\'{e} map therefore requires some subtlety \cite{periodic}.  Thus, even under general time-periodicity, using a lobe area to characterize transport due to the broken heteroclinic becomes more ambiguous.  Lobe areas may also not be equal if area-preservation is not
imposed, and thus once again using a lobe area to quantify transport is ill-defined.  For general time-{\em aperiodic} $ \vec{g} $, and general non-area-preserving flows, transport is better quantified in terms of an instantaneous flux \cite{aperiodic,rossbyflux,siam_book}, which we will describe and adopt to our current three-dimensional system in Section~\ref{sec:flux}.

To now describe our three-dimensional theory of lobe dynamics, we first impose the condition of a time-harmonic perturbation
\begin{equation}
\vec{g}(\vec{x},t)] = \tilde{g} \left( \vec{x} \right) \cos \left[ \omega t + \phi \right] \, , 
\label{eq:harmonic}
\end{equation}
where $ \tilde{\vec{g}} $ and $ D \tilde{\vec{g}} $ is bounded (as per hypotheses), and the frequency $ \omega \ne 0 $ and the
phase shift $ \phi $ are constant.  The period of the perturbation is then $ T = 2 \pi / \omega $.  

We first show that the Melnikov function takes a remarkably simple form---itself harmonic.  To express this, we make the following choice for the definition of the Fourier transform:
\[
{\mathcal F} \left\{ H(\tau) \right\}(\omega) := \int_{-\infty}^\infty e^{i \omega \tau} H(\tau) \, \d \tau \, ,
\]
for functions of time in $ \mathrm{L}^1(\mathbb{R}) $.  

\begin{theorem}[Melnikov function for harmonic perturbations]
If $ \vec{g} $ is harmonic as given in (\ref{eq:harmonic}), the Melnikov function is itself
harmonic, and expressible as
\begin{equation}
M(p,\alpha,t) = \left| {\mathcal F}\left\{ h(p,\alpha,\centerdot) \right\}(\omega) \right| \, \cos \left[ \omega \left( t -p\right) + \phi + \mathrm{arg} \left(
{\mathcal F} \left\{ h (p,\alpha,\centerdot) \right\} (\omega) \right) \right] 
\label{eq:melnikovharmonic}
\end{equation}
where
\begin{equation}
h(p,\alpha,\tau) := \exp\left[{\int_{\tau}^p \nabla \cdot \vec{f}(\bar{\vec{x}}(\xi, \alpha)) d\xi}\right] \left[ \vec{f}(\bar{\vec{x}}(\tau, \alpha)) \wedge \bar{\vec{x}}_{\alpha}(\tau, \alpha)\right] \cdot \tilde{\vec{g}}(\bar{\vec{x}}(\tau, \alpha)) \, .
\label{eq:fourierfunction}
\end{equation}
\end{theorem}

\begin{proof}
Under the harmonic assumption, the Melnikov
function (\ref{eq:melnikov}) becomes
\begin{align*}
    M(p,\alpha,t) & = \int^{\infty}_{-\infty} \exp\left[{\int_{\tau}^p \nabla \cdot \vec{f}(\bar{\vec{x}}(\xi, \alpha)) d\xi}\right] \left[ \vec{f}(\bar{\vec{x}}(\tau, \alpha)) \wedge \bar{\vec{x}}_{\alpha}(\tau, \alpha)\right] \cdot \tilde{\vec{g}}(\bar{\vec{x}}(\tau, \alpha)) \cos \left[ \omega \left( \tau + t -p\right) + \phi \right] ~ \d \tau \, .
\end{align*}
By treating the other parameters $ (p,\alpha,t) $ as constants when taking the Fourier transform with
respect to $ \tau $, 
the result follows from standard trigonometric manipulations (see \cite{periodic}). 
\end{proof}

\begin{remark}
{\em The `size' of the splitting is therefore encoded in the amplitude of the Melnikov function,
which is simply the modulus of a Fourier transform, i.e., $ \left| {\mathcal F}\left\{ h(p,\alpha,\centerdot) \right\}(\omega) \right| $. We will show that this is the primary measure
for transport. The remaining cosine term captures the harmonic
fluctuations in time $ t $ and location $ p $, occurring with frequency $ \omega $.  }
\end{remark}

Under the generic assumption that
$ {\mathcal F}\left\{ h \right\}(\omega) $ is not identically zero, it is clear that there are zeros of
$ M $ in (\ref{eq:melnikovharmonic}) for each fixed $ t $,  along the $ (p,\alpha) $ curves 
implicitly defined by 
\[
\omega \left( t -p\right) + \phi + \mathrm{arg} \left(
{\mathcal F} \left\{ h (p,\alpha,\centerdot) \right\} (\omega) \right) =  \frac{\pi( 2 k + 1) }{2} \quad ;
\quad k \in \mathbb{Z} \, .
\]

\begin{remark}[Fourier transform under volume-preservation]
{\em If the unperturbed flow is volume-preserving, i.e., if $ \vec{\nabla} \cdot \vec{f} = 0 $, then
$ h $ in (\ref{eq:fourierfunction}) loses its $ p $-dependence, and is
\begin{equation}
h(\alpha,\tau) :=  \left[ \vec{f}(\bar{\vec{x}}(\tau, \alpha)) \wedge \bar{\vec{x}}_{\alpha}(\tau, \alpha)\right] \cdot \tilde{\vec{g}}(\bar{\vec{x}}(\tau, \alpha)) \, .
\label{eq:fourierfunction_volumepreserving}
\end{equation}
Consequently, the amplitude of the Melnikov function (\ref{eq:melnikovharmonic})
is $ p $-independent, and $ t $ and $ p $ only occur in the combination $ t - p $.
}
\end{remark}

We now proceed to quantify stable/unstable manifold intersections, as well as lobe dynamics, under
the hypothesis that $ \vec{\nabla} \cdot \vec{f} = 0 $.  Note however that we are not imposing
volume-preservation on the perturbation $ \vec{g} $.  The implication of (\ref{eq:fourierfunction_volumepreserving}) is that the $ (p,\alpha) $ curves along which zeros occur can be solved explicitly for $ p $ as 
\begin{equation}
\tilde{p}(\alpha,k) = \frac{1}{\omega} \left[ \mathrm{arg} \left(
{\mathcal F} \left\{ h (\alpha,\centerdot) \right\} (\omega) \right) - \frac{\pi( 2 k + 1) }{2} + \omega t + \phi \right] \quad ; \quad (\alpha,k) \in \mathrm{S}^1 \times \mathbb{Z} \, .
\label{eq:heteroclinicring}
\end{equation}
For any chosen value of $ k \in \mathbb{Z} $ (say $ k = 0 $), $ p = \tilde{p}(\alpha,k) $ defines a curve of heteroclinic points which goes across all the $ \alpha $ values in $ \mathrm{S}^1 $.  More precisely, as stated in Remark~\ref{remark:heterocliniccurves}, there is a $ {\mathcal O}(\epsilon) $-close curve on $ \Gamma $ along whose normal (to $ \Gamma $) there is an intersection of the perturbed manifolds.  Geometrically, this means that there is a ring of heteroclinic points, $ Q $, going all the way around the unperturbed manifold. The $ p $-parametrization associated with each point on the ring is given by (\ref{eq:heteroclinicring}) with $ k = 0 $.  Note that $ p $ will be varying as a function of $ \alpha \in \mathrm{S}^1 $ in general. There are similar rings of heteroclinic points for every other  $ k $ value which are simple shifts as given in (\ref{eq:heteroclinicring}).  There being infinitely many heteroclinic rings is the analogous situation to there being infinitely many heteroclinic points in the two-dimensional case. We can visualize this most easily in the $ (\alpha,p) $ parameter space, as shown in Fig.~\ref{fig:lobesparameter}.
The curves represent (\ref{eq:heteroclinicring}) at various $ k $-values, and the regions in between will have
$ M $ being positive and negative in alternating fashion.  For example, in the region between the curves
$ p = \tilde{p}(\alpha,-1) $ and $ p = \tilde{p}(\alpha,0) $, $ M $ is strictly positive, which implies that the unstable 
manifold is {\em outside} the stable manifold in this region.  However, these coincide $ {\mathcal O}(\epsilon )$-close to the boundary curves, and thus correspond to a {\em lobe} which is bounded by the perturbed stable
and unstable manifolds.  More precisely, a $ {\mathcal O}(\epsilon)$ characterization of the lobe flanked by
$p = \tilde{p}(\alpha,k-1) $ and $ p = \tilde{p}(\alpha,k) $ is
\[
L_k = \left\{ \bar{\vec{x}}(p,\alpha) + 
\frac{\epsilon \left[ s M^u(p,\alpha,t) + (1-s) M^s(p,\alpha,t) \right] \hat{\vec{n}}(p,\alpha)}{{\left| \vec{f}(\bar{\vec{x}}(p, \alpha)) \wedge \bar{\vec{x}}_\alpha(p, \alpha) \right| }}  \, : \; \alpha \in \mathrm{S}^1 \, , \, 
\tilde{p}(\alpha,k-1) \le p \le \tilde{p}(\alpha,k) \, , \, s \in [0,1] \right\} \, , 
\]
expressed in terms of the unit normal vector
\begin{equation}
\hat{\vec{n}}(p,\alpha) := 
\frac{\vec{f}(\bar{\vec{x}}(p, \alpha)) \wedge \bar{\vec{x}}_\alpha(p, \alpha) }{\left| \vec{f}(\bar{\vec{x}}(p, \alpha)) \wedge \bar{\vec{x}}_\alpha(p, \alpha) \right|}
\label{eq:normal}
\end{equation}
at locations $ \bar{\vec{x}}(p,\alpha,t) $ on $ \Gamma $, and the unstable and stable Melnikov
functions $ M^u $ and $ M^s $.  This is set up so that when $ s = 0 $, we
are at the $ {\mathcal O}(\epsilon) $-representation of the point $ \vec{x}^s(p,\alpha,\epsilon,t) $, and when $ s = 1 $ we are at
$ \vec{x}^u(p,\alpha,\epsilon,t) $ on the unstable manifold. All such lobes, determined by taking adjacent values of $ k $ in (\ref{eq:heteroclinicring}), are topologically equivalent to solid rings (torii along with their interior) given that they wrap around $ \alpha \in \mathrm{S}^1 $.  Those indexed by even $ k $ have the unstable
manifold being outside the stable manifold (with the corresponding Melnikov function $ M > 0 $ in
the interior of the subtending $ (p,\alpha)$ domain), and those with odd $ k $ have the opposite. 

If instead we go along a constant $ \alpha $-line in Fig.~\ref{fig:lobesparameter}, $ M $ will periodically vary from positive to negative (and vice versa) as each $ p = \tilde{p}(\alpha,k) $ curve is crossed.   This means that if going along an unperturbed heteroclinic trajectory $ \bar{\vec{x}}(p,\alpha) $
with fixed $ \alpha $ on
$ \Gamma $, the perturbed stable and unstable manifold will periodically be intersecting (nearby the locations
$ \bar{\vec{x}}\left(p(\alpha,k),\alpha \right) $), with the stable and unstable manifolds interchanging relative
locations at each crossing of the curves $ p = \tilde{p}(\alpha,k) $.  So this picture would be topologically equivalent to the two-dimensional picture of Fig.~\ref{fig:2dlobes}, with infinitely many intersections
occurring in approaching the points $ \vec{a}_\epsilon(t) $ and $ \vec{b}_\epsilon(t) $.  Hence, the topological three-dimensional lobe structure can be visualized by `rotating' Fig.~\ref{fig:2dlobes}, i.e., taking this $ \alpha = $ constant situation and rotating around the line connecting $ \vec{a}_\epsilon(t) $ 
and $ \vec{b}_\epsilon(t) $ to generate the full picture associated with $ \alpha $ ranging in $ \mathrm{S}^1 $. So $ q $ rotates to become a a topological circle $ Q $, and each lobe area in
Fig.~\ref{fig:2dlobes} rotates to generate three-dimensional solid-ring lobes.

\begin{figure}[t]
	\centering
	\includegraphics[scale=0.5]{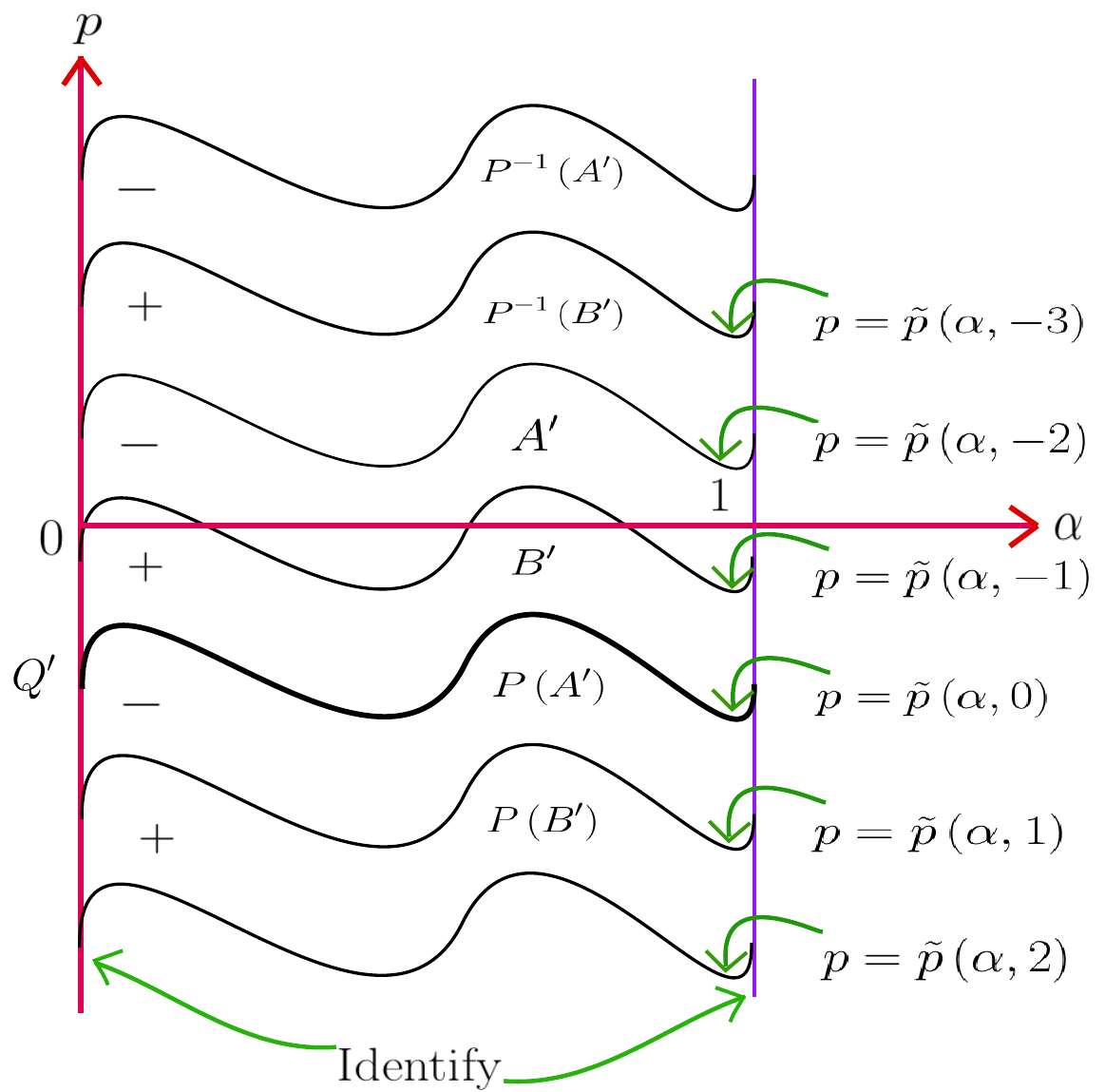}
	\caption{The behavior of the Melnikov function $ M(p,\alpha,t) $ (shown with positive and negative
	signs) at a fixed time-instance $ t $, with
	respect to $ (p,\alpha) $-space, under time-harmonic volume-preserving conditions.  The (infinitely many) curves correspond to $ p = \tilde{p}(\alpha,k) $ as defined
	in (\ref{eq:heteroclinicring}), and are simple shifts of one another. The labels with primes represent
	the regions in parameter space corresponding to three-dimensional lobe dynamics, which turns out to
	be a rotated (in $ \alpha$) version of Fig.~\ref{fig:2dlobes} as explained in the main text. }
	\label{fig:lobesparameter}
\end{figure}

Lobe dynamics can be now be applied in a similar spirit to the two-dimensional situation \cite{romkedar,wigi_book}.  Consider the two-dimensional unstable manifold $\Gamma_{\epsilon}^u\left(\vec{a}_\epsilon, t\right)$ emanating from $ \vec{a}_\epsilon(t) $ until it meets
the heteroclinic ring $ Q $.  Similarly, take the part of the two-dimensional stable manifold $\Gamma_{\epsilon}^s\left(\vec{b}_\epsilon, t\right)$ emanating from $ \vec{b}_\epsilon(t) $ until it reaches $ Q $.  Attaching these two restricted manifold segments together gives a pseudo-separatrix which demarcates the `inside' and the `outside' of what is topologically a sphere.  Now, under mappings of the Poincar\'{e} map $ P $ which has period $ T = 2 \pi / \omega $, the lobe volumes get mapped into one another, because the ring of heteroclinic points $ Q $ must itself map into similar rings.  
We have
in Fig.~\ref{fig:lobesparameter} used a `prime' notation to label regions in the parameter space 
corresponding to the structures in three-dimensional $ \vec{x} $-space.  For example $ Q' $ is the
manifestation in the parameter space of the ring of heterclinic points $ Q $, where we have chosen to take
the curve $ p = \tilde{p}(\alpha,0) $ as the relevant transverse intersection to help define the
pseudo-separatrix.  The turnstile lobes $ A $, $ B $, $ P(A) $ and $ P(B) $ are therefore associated
with the regions $ A' $, $ B' $, $ P(A') $ and $ P(B') $ in Fig.~\ref{fig:lobesparameter}, within which
the Melnikov function is respectively negative, positive, negative and positive. The lobe dynamics
then are as in a rotated version of Fig.~\ref{fig:2dlobes}, but with the same labelling.

As for the two-dimensional case, it is only the two ring-lobes between $ P^{-1}(Q) $ and $ Q $ which cross the pseudo-separatrix,
mapping to the lobe structures between $ Q $ and $ P(Q) $.  Consequently the volumes of these lobes can characterize the transport across the (now broken) heteroclinic.  

\begin{theorem}[Lobe volume for time-harmonic volume-preserving flow]
\label{theorem:lobevolume_harmonic}
Suppose $ \vec{g} $ is harmonic, and the unperturbed flow is volume-preserving (i.e., $ \vec{\nabla} 
\cdot \vec{f} = 0 $).  Then, each and every one of the lobes $ L_k $, for $ k \in \mathbb{Z} $ has the
volume
\begin{equation}
\mathrm{Volume} \left( L_k \right)
= \frac{ 2 \epsilon}{\omega} \int_0^1 \left| {\mathcal F}\left\{ h(\alpha,\centerdot) \right\}(\omega) \right| \, \d \alpha + {\mathcal O}(\epsilon^2) \, . 
\label{eq:lobevolume_harmonic}
\end{equation}
\end{theorem}

\begin{proof}
See Appendix~\ref{sec:lobevolume_harmonic}. 
\end{proof}

The $ k $-independence in the leading-order term in (\ref{eq:lobevolume_harmonic}) ensures that the volumes of {\em every} one of these lobes is equal to leading-order under our assumptions.  Note that this is true even if the perturbation $ \tilde{\vec{g}} $ were not volume preserving (then, volume equality of the ring-lobes only gets compromised at the next order
in $ \epsilon $).  Given the independence of the leading-order lobe volumes to the choice of the lobe,
we can therefore use (\ref{eq:lobevolume_harmonic}) to express the transport across the broken heteroclinic 
in the sense of the volume of fluid that is interchanged across the pseudo-separatrix due to the turnstile
lobes upon one iteration of the Poincar\'{e} map.  

\begin{remark}
{\em
As pointed out by Rom-Kedar and Poje \cite{romkedarpoje}, given the dependence on the time period of the Poincar\'e map, the lobe volume by itself may not be a good quantifier of the flux in the sense of fluid volume exchanged per unit time.  This is better obtained by dividing the volume by the associated time-of-flow $ T = 2 \pi / \omega $ of the Poincar\'e map.  This yields
\begin{equation}
    \mathrm{Flux~(harmonic)} = \frac{\epsilon}{\pi} \int_0^1 \left| {\mathcal F}\left\{ h(\alpha,\centerdot) \right\}(\omega) \right| \, \d \alpha + {\mathcal O}(\epsilon^2) \, .
    \label{eq:flux_lobes_harmonic}
\end{equation}
Interesting implications related to the flux as a function of the frequency $ \omega $ can now be made,
just as for the two-dimensional case.  Generically, the expectation is for the flux to increase at small 
$ \omega $ and eventually decay as $ \omega \rightarrow \infty $, implying the presence of a
flux optimizing frequency \cite{romkedarpoje}.  For a given unperturbed
flow for which the heteroclinic structure is known, the Fourier transform formula in (\ref{eq:flux_harmonic})
can therefore be employed easily, for example, to find the frequency resulting in the greatest flux (as has been done in two dimensions \cite{frequency}).  Alternatively, the formula can be analyzed with different
spatial perturbations $ \tilde{\vec{g}} $ at fixed $ \omega $, attempting to find transport-optimizing
perturbations (see \cite{optimal,l2mixer} for two-dimensional implementations). 
}
\end{remark}

\subsection{Instantaneous flux for the general situation}
\label{sec:flux}

\begin{figure}[t]
	\includegraphics[scale=0.38]{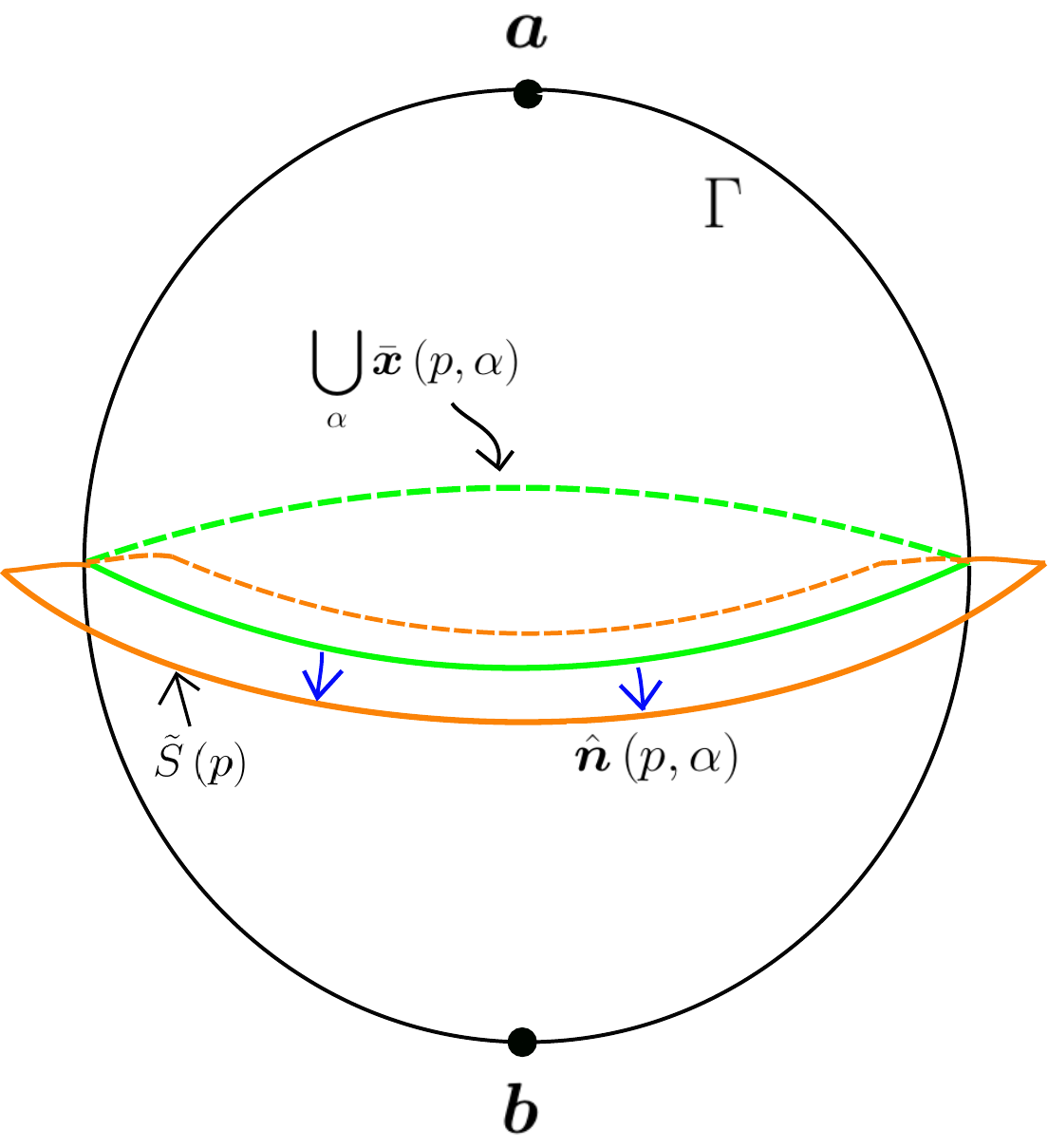}
	\hspace{0.25in}
	\includegraphics[width=67mm,height=74mm,scale=0.5]{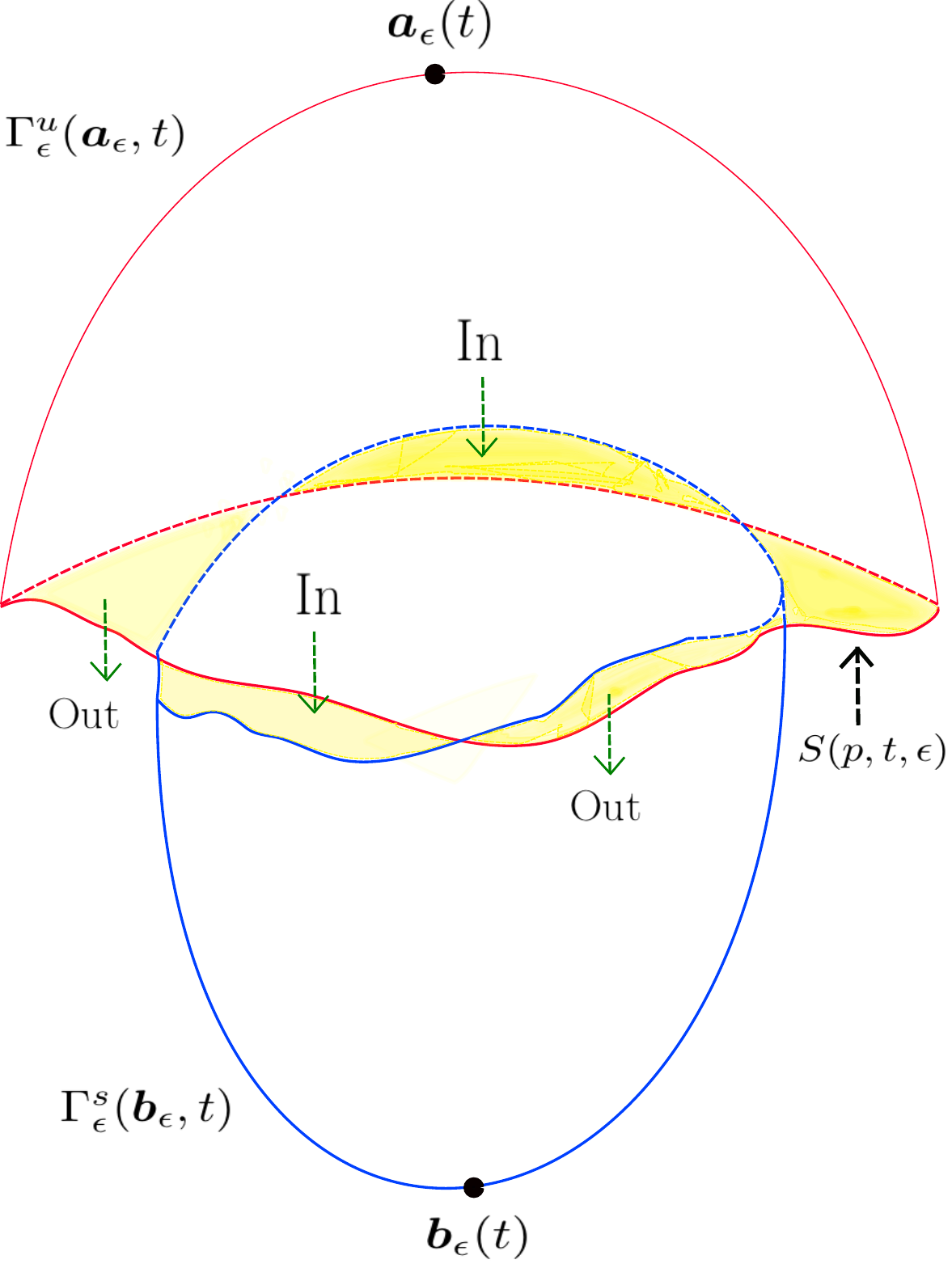}
	\caption{The construction of (a) the strip, and (b) the pseudo-separatrix at time $ t $, associated with making a choice of $ p $, and consisting of the unstable manifold (red) and stable manfold (blue), both extending until intersecting the strip $ S(p,\epsilon,t) $, as well as the strip
	$ S (p,\epsilon,t) $.  
	}
	\label{fig:pseudoseparatrix}
\end{figure}

Obtaining lobe volumes and flux specifically as a measure of transport across the broken separatrix was done in the previous section under several assumptions: notably, the harmonicity of the perturbation, and the volume-preserving nature of the unperturbed flow.  Even when the harmonicity is relaxed to general periodicity, the expressions cannot be used to quantify transport between there will in general be {\em differently-sized} lobes, exactly as in the two-dimensional situation \cite{periodic}.  Thus, in this section we will describe a more general way of quantifying the transport, analogous to the two-dimensional development of instantaneous flux \cite{aperiodic,rossbyflux}.  Most tellingly, we now relax both the assumptions on time-harmonicity and volume-preservation, allowing $ \vec{g}(\vec{x},t) $ to have any {\em general}
temporal behavior (but subject to the smoothness/boundedness assumptions detailed in Section~\ref{sec:manifolds}), and both $ \vec{f} $ and $ \vec{g} $ to not be divergence-free. 

An `obvious' way of quantifying a time-dependent flux across the previously impermeable $ \Gamma $
would be the Eulerian approach
\begin{align}
    \mathrm{Eulerian~flux~}(t,\epsilon) &= \iint_\Gamma \left[ \vec{f}(\bar{\vec{x}}(p,\alpha) + \epsilon \vec{g}\left( \bar{\vec{x}}(p,\alpha),t \right) \right] \cdot \hat{\vec{n}}(p,\alpha) \, \d S \nonumber \\
    &= \epsilon \int_0^1 \int_{-\infty}^\infty \vec{g} \left(\bar{\vec{x}}(p,\alpha),t \right) \cdot \left[ \vec{f}(\bar{\vec{x}}(p,\alpha) \wedge \bar{\vec{x}}_\alpha(p,\alpha) \right] \, \d p \, \d \alpha \, .
    \label{eq:eulerianflux}
\end{align}
The term `Eulerian' is used here in the fluid-mechanics context, in relation to the flux across a fixed 
surface.  The simplification above occurs because $ \vec{f} $ is tangential to $ \Gamma $, and thus does not contribute.

However, the Eulerian approach does not take into account {\em transport}.  The issue is that the manifolds
themselves move, as do trajectories on and adjacent to them.  Therefore a velocity flux across $ \Gamma $
does not capture the Lagrangian (following the flow of particle trajectories) transport.  As a simple example, imagine that the perturbation is such that the manifold $ \Gamma $ retains its structure as continuing to be heteroclinic, but simply `puffs out' to be slightly larger than the original $ \Gamma $.  Then, there should be no predicted transport, because the perturbed manifold structure persists.  On
the other hand, there could be a nonzero Eulerian flux because $ \vec{g} $ need not be zero on $ \Gamma $, nor perpendicular to $ \Gamma $.  It need only satisfy those conditions on the perturbed version of 
$ \Gamma $.  Computing the flux across the fixed unperturbed $ \Gamma $ without taking into account the
fact that the invariant manifolds have now perturbed, is incorrect.  In other words, in a transport computation the fact that the manifold is itself moving must be taken into account, and not simply the time-variation of the vector field.

A {\em Lagrangian} approach---which takes into account the perturbation on the
previously impermeable heteroclinic manifold---is thus necessary.
On the unperturbed $ \Gamma $, consider a fixed-$ p $ curve, that is $ \bigcup_{\alpha \in \mathrm{S}^1} \bar{\vec{x}}(p,\alpha) $.  This goes all the way around $ \Gamma $, and at each point on it has an outward-pointing  normal
vector $ \hat{\vec{n}}(p,\alpha) $ as given in (\ref{eq:normal}).  We form the unperturbed strip $ \tilde{S}(p) $ by taking the union of these normal vectors across $ \alpha \in \mathrm{S}^1 $, and extending outwards in both the positive and negative directions of $ \hat{\vec{n}}(p,\alpha) $.  At this instance, we will not specify how far the extension needs to be done; this will be clear shortly.  Thus, $ \tilde{S}(p) $ is 
an oriented strip going all the way around $ \Gamma $; a portion of this is displayed in Fig.~\ref{fig:pseudoseparatrix}(a).

Now consider the perturbed stable 
and unstable manifolds at a general time $ t $.  Using the results from Section~\ref{sec:manifolds}, we know that the distance to the perturbed unstable manifold along the normal vector direction is given by
\[
d^u(p,\alpha,\epsilon,t) = \hat{\vec{n}}(p,\alpha) \cdot \left[ \vec{x}^u(p,\alpha,\epsilon,t) -
\bar{\vec{x}}(p,\alpha) \right] \, 
\]
and to the perturbed stable manifold
\[
d^s(p,\alpha,\epsilon,t) = \hat{\vec{n}}(p,\alpha) \cdot \left[ \vec{x}^s(p,\alpha,\epsilon,t) -
\bar{\vec{x}}(p,\alpha) \right] \, 
\]
in terms of trajectories $ \vec{x}^u $ and $ \vec{x}^s $ respectively on the perturbed manifolds. We
now define the strip $ \S(p,\epsilon, t) \subset  \tilde{S}(p) $ by
\begin{equation}
    S(p,\epsilon,t) := \bigcup_{\alpha \in \mathrm{S}^1} \bigcup_{s \in [0,1]} \left\{ \bar{\vec{x}}(p,\alpha) + 
    \hat{\vec{n}}(p,\alpha) \left[ s d^u(p,\alpha,\epsilon,t) + (1-s) d^s(p,\alpha,\epsilon,t) \right] \right\} \, .
    \label{eq:strip}
\end{equation}
Thus, at each fixed time $ t $ and parameter $ p $, the strip $ S(p,\epsilon,t) $ is a $ {\mathcal O}(\epsilon)$-width `ribbon' which is attached to the closed curve $\bigcup_{\alpha \in \mathrm{S}^1} \bar{\vec{x}}(p,\alpha) $ on $ \Gamma $.  This is illustrated by the shaded segment in  Fig.~\ref{fig:pseudoseparatrix}(b).  At each location $ \alpha $, the strip traverses the normal direction to
$ \Gamma $ spanning from the stable to the unstable manifold.   When the
stable and unstable manifolds intersect, then the `ribbon' has zero width at that point. Therefore, the unperturbed strip $ \tilde{S}(p) $ needs to extend out to ensure that $ S(p,\epsilon,t) \subset \tilde{S}(p) $ for all times $ t $ and perturbative parameters $ \epsilon $ for which we want to characterize the flux. 

We want to define an instantaneous flux across the broken heteroclinic manifold, taking into account the
Lagrangian nature of the problem.  To do so, we now define the pseudo-separatrix  $ \tilde{\Gamma}(p,t,\epsilon) $ as
the union of the following two-dimensional surfaces:
\begin{itemize}
    \item The part of the unstable manifold $ \Gamma^u_\epsilon(\vec{a}_\epsilon,t) $ emanating from
    $ \vec{a}_\epsilon(t)$, until it first hits the strip $ S(p,\epsilon,t) $;
    \item The strip $ S(p,\epsilon,t) $; and 
    \item The part of the stable manifold $ \Gamma^s_\epsilon(\vec{b}_\epsilon,t) $ emanating from
    $ \vec{b}_\epsilon(t)$, until it first hits the strip $ S(p,\epsilon,t) $.
\end{itemize}
These entities are shown in Fig.~\ref{fig:pseudoseparatrix}(b).  We note that $ \tilde{\Gamma}(p,\epsilon,t) $ is $ {\mathcal O}(\epsilon)$-close the the unperturbed $ \Gamma $, which was a genuine flow separator between the its `inside' and `outside.'  The pseudo-separatrix $ \tilde{\Gamma} $ is itself a closed surface for any chosen $ p $, and at any time $ t $.  It is one way of attempting to define a semi-separator between the inside and the outside of the perturbed version of $ \Gamma $, which is in reality the combination of the perturbed stable and unstable manifolds which typically will intersect with each other.  Note that the definition takes into account
the perturbed versions of both the stable and unstable manifolds, and is hence Lagrangian. We can 
determine the instantaneous signed flux (volume per unit time) exiting $ \tilde{\Gamma} $ at time $ t $, subject to the
choice of the parameter $ p $.  Key to this quantification is the observation that the stable/unstable
manifold parts of $ \tilde{\Gamma}(p,\epsilon,t) $ are moving as invariant objects, and hence can have no flux crossing them.  Transport across $ \tilde{\Gamma}(p,\epsilon,t) $ occurs because of the flux across the strip where the stable and unstable manifolds connect.  The instantaneous flux $ \Phi $ can be
quantified elegantly in terms of the Melnikov function:

\begin{theorem}[Instantaneous flux]
\label{theorem:flux}
The instantaneous (signed) flux exiting the pseudo-separatrix $ \tilde{\Gamma}(p,t,\epsilon) $ is
\begin{equation}
    \Phi(p,t,\epsilon) = \epsilon \int_0^1 M(p,\alpha,t) \, \d \alpha + {\mathcal O}(\epsilon^2) \, .
    \label{eq:flux}
\end{equation}
\end{theorem}

\begin{proof}
See Appendix~\ref{sec:flux_proof}.
\end{proof}

We emphasize that this result is for the {\em general} form of the Melnikov function, without having
to make assumptions on time-harmonicity or volume-preservation.  It is even true if the manifolds do not intersect at all!  Theorem~\ref{theorem:flux} is therefore a very general result, 
which states that the leading-order instantaneous flux is given by integrating the Melnikov function over all
$ \alpha $ (representing all the unperturbed heteroclinic trajectories).  The instantaneous fllux
$ \Phi $ is of course time-dependent; this variation is captured by the $ t $ in the Melnikov function.
Moreover, $ \Phi $ is also dependent on the choice of $ p $, the location along which the the perturbed stable and unstable manifolds are joined.

The flux is explicitly the volume of phase space which crosses $ \tilde{\Gamma} $ per unit time.  This is best rationalized in the fluid mechanical context in which $ \vec{f} + \epsilon \vec{g} $ is a fluid velocity.
Then, the flux represents exactly the volume of fluid per unit time exiting $ \tilde{\Gamma} $.

It should also be pointed out that the flux $ \Phi $ is {\em signed}.  In parts where the unstable manifold is outside the stable
manifold on the strip $ S $, there will be flux {\em exiting} the closed surface $ \tilde{\Gamma} $ (see Fig.~\ref{fig:pseudoseparatrix}(b)). These will be encoded as positive, reflecting also the fact that a positive Melnikov function implies that $ d^u - d^s $, when projected on to the outwards-pointing normal vector direction $ \hat{\vec{n}}(p,\alpha) $, is positive.  Similarly, parts where the unstable manifold is inside the stable manifold are associated with flux
flowing into $ \tilde{\Gamma} $, and thus constitutes negative flux.  The expression $ \Phi $ in
Theorem~\ref{theorem:flux} sums all these flux contributions to obtain a signed net flux.  If $ \Phi > 0 $, that means that there is more volume of fluid instantaneously exiting $ \tilde{\Gamma} $ than there is entering it.  

\begin{remark}
{\em The instantaneous flux is dependent on the choice of $ p $; that is, the ring of locations on the heteroclinic trajectories at which the strip $ S $ is drawn.  In view of Remark~\ref{remark:heteroclinicshift}, though, it is clear that if the unperturbed flow is volume-preserving, then the flux $ \Phi $'s $ p $-dependence is equivalently a shift in $ t $.
}
\end{remark}

Having no assumptions on time-harmonicity or volume-preservation, the instantaneous flux interpretation
and expression given in Theorem~\ref{theorem:flux} is an important result.  However, for completeness,
we next discuss the implications of imposing time-harmonicity, and compare with the lobe dynamics approach. 

\begin{corollary}[Instantaneous flux for time-harmonic perturbations]
\label{corollary:flux_harmonic}
If the perturbation $ \vec{g} $ satisfies the time-harmonic assumption (\ref{eq:harmonic}), then
the instantaneous flux of Theorem~\ref{theorem:flux} can be written as
\begin{equation}
    \Phi(p,t,\epsilon) = \epsilon \left| \int_0^1 {\mathcal F} \left\{ h(p,\alpha,\centerdot) \right\} (\omega) \, 
    \d \alpha \right| \cos \left[ \omega (t-p) + \phi + \mathrm{arg} \left( \int_0^1 {\mathcal F} \left\{ h(p,\alpha,\centerdot) \right\} (\omega) \, 
    \d \alpha \right) \right] + {\mathcal O}(\epsilon^2) \, , 
    \label{eq:flux_harmonic}
\end{equation}
where $ h $ is defined in (\ref{eq:fourierfunction}).
\end{corollary}

\begin{proof}
See Appendix~\ref{sec:flux_harmonic_proof}.
\end{proof}

The leading-order term of the flux in this specialized instance is itself harmonic in time $ t $, and
subject to the same frequency $ \omega $ as the perturbation.  This means that as they evolve,  the stable and unstable manifolds periodically interchange their intersection locations along the normal vector direction, such that the net flux flips from being outwards, to being inwards, and outwards again.
The $ p $-dependence (i.e., the dependence on where the strip $ S(p,t,\epsilon) $ is positioned) is significantly more complicated. A time-averaged measure of the
leading-order instantaneous flux could be the amplitude of the harmonic, which we observe is almost the
same as the flux argued via the lobe dynamics approach for time-harmonic volume-preserving instances in (\ref{eq:flux_lobes_harmonic}).  That is, the comparison is between
\begin{equation}
F_L = \frac{1}{\pi} \int_0^1 \left| {\mathcal F} \left\{ h(\alpha,\centerdot) \right\} (\omega) \right| \, \d \alpha \,
\qquad \mathrm{and} \qquad
F_I = \left| \int_0^1 {\mathcal F} \left\{ h(p,\alpha,\centerdot) \right\} (\omega) \, \d \alpha \right| \, ,
\label{eq:flux_harmonic_measures}
\end{equation}
where $ F_L $ is the leading-order flux from the lobe dynamics approach (\ref{eq:flux_lobes_harmonic}),
and $ F_I $ is the amplitude of the leading-order term in the instantaneous flux approach of this section.
We detail the differences below.
\begin{itemize}
    \item There is a minor scaling factor of $ \pi $; this is inconsequential and related to the time-scaling used in converting the lobe volumes to a flux in (\ref{eq:flux_lobes_harmonic}.  If the time-average (rather than the amplitude) of (\ref{eq:flux_harmonic}) were used instead, a slightly
    different scaling factor involving $ \omega $ would result instead.
    \item The instantaneous flux version $ F_I $ is $ p $-dependent (via $ h $), unlike $ F_L $. The reason for this is that Corollory~\ref{corollary:flux_harmonic} does {\em not} require a 
volume-preservation assumption, whereas for the lobe dynamics approach, one needs this to ensure that the ``volume of a lobe'' is unambiguous to leading-order.  Consequently, in the more general framework
of an instantaneous flux as presented here, there will be dependence on the location $ p $ chosen to
define the strip.  The flux based on a different choice of strip will not necessarily be the same,
because a lack of volume-preservation means that the flux across two strips located at $ p = p_1 $
and $ p = p_2 $ are not the same.  If volume-preservation of $ \vec{f} $ was imposed, however, a straightforward application of the divergence theorem for the region bounded by the strips $ S(p_1,t,\epsilon) $ and $ S(p_2,t,\epsilon) $, and the perturbed stable and unstable manifolds
$ \Gamma_\epsilon^u(\vec{a}_\epsilon,t) $ and $ \Gamma_\epsilon^s(\vec{b}_\epsilon,t) $ implies
that the flux across the two strips are the same (to leading-order).  Of course, then the Fourier
transform $ \int_0^1 {\mathcal F} \left\{ h(p,\alpha,\centerdot) \right\}(\omega) \, \d \alpha $
reduces to $ \int_0^1 {\mathcal F} \left\{ h(\alpha,\centerdot) \right\}(\omega) \, \d \alpha $ where
the new $ h $ is $ p $-independent as given in (\ref{eq:fourierfunction_volumepreserving}) as 
opposed to (\ref{eq:fourierfunction}).
    \item The modulus signs are inside the $ \alpha $-integral in $ F_L $, but outside it in $ F_I $.
    The intuition for this is subtle.  In the lobe dynamics approach, a lobe volume is computed by integrating over a region in which the Melnikov function were sign definite,  Thus, the amplitude of the harmonic form of the Melnikov function (\ref{eq:melnikovharmonic}) remains the same sign over all $ \alpha $.  This is not necessarily so for the general instantaneous flux scenario, in which a ring of constant $ p $ is chosen to define the strip which forms the connection between the perturbed stable and unstable manifolds.  When going along this, the stable and unstable manifolds may interchange their locations; consequently, the Melnikov function will in general take on both positive and negative values.  The instantaneous flux takes all this into account, `adding everything up,' and this is accomplished with the modulus being taking {\em after} the net impact is computed.
\end{itemize}
Given these observations, the amplitude of the leading-order flux for the instantaneous flux interpretation, i.e., $ F_I $ in (\ref{eq:flux_harmonic_measures}), is a more general measure of the flux
for time-harmonic perturbations. In particular, it does {\em not} require volume-preservation, unlike 
in the lobe dynamics approach.

\section{Application to Hill's spherical vortex} 
\label{sec:hill}

In this section, our Melnikov theory is applied to Hill's spherical vortex, in particular in quantifying the splitting
of the stable and unstable manifolds after perturbation.  We consider both the classical (no-swirl) and the rotating (swirl) 
versions, which correspond respectively to the situations of purely real, and complex-conjugate eigenvalues.

\subsection{Classical Hill's spherical vortex}
\label{sec:hill_classical}

The classical Hill's spherical vortex is a solution of Euler's equations of motion for an inviscid fluid.  In $(r, \theta, \phi)$
spherical polar coordinates with $r \geq 0$ is the radial distance from the origin, $\theta \in [0, \pi]$ the polar angle
and $\phi \in [0, 2 \pi)$ the azimuthal angle, 
the (continuous) velocity field is given by \cite{hill1894,3d} 
\begin{equation}
\renewcommand{\arraystretch}{1.8}
\vec{f}(r, \theta, \phi) 
=  \left\{ \begin{array}{ll} 
\frac{3 \cos(\theta)}{2} \left(1 - r^2\right) \hat{\vec{r}} - \frac{3 \sin(\theta)}{2} \left(1 - 2 r^2\right) \hat{\vec{\theta}}& ~~~\text{if $r \le 1$}\\
-\frac{\cos(\theta)}{r^3}\left(r^3 - 1\right) \hat{\vec{r}} + \frac{\sin(\theta)}{2r^3}\left(2r^3 + 1\right)  \hat{\vec{\theta}} & ~~~\text{if $r > 1 $}
\end{array} \right. \, .
\label{eq:hill}
\end{equation}
It is easily verifiable that $ \vec{\nabla} \cdot \vec{f} = 0 $ here; the flow is volume-preserving.  The globe $ r = 1 $
is a heteroclinic manifold $ \Gamma $, associated with saddle points located at the north and south poles.
In $(r, \theta)$-coordinates, these points can be written as $ \vec{a} = (1, 0)$ and $
\vec{b} = (1, \pi)$, and the unstable manifold 
of $ \vec{a} $ coincides with the stable manifold of $ \vec{b} $, to form $ \Gamma $.  This manifold is foliated with heteroclinic trajectories which
have a constant $\phi$-value (constant longitude), and thus the trajectory-identifying parameter is $ \alpha = \phi / (2 \pi)  \in \mathrm{S}^1 $.
See Fig.~\ref{fig:hill}.  The splitting of $ \Gamma $, using the volume-preserving requirement
and a functional-analytic viewpoint, has been previously pursued \cite{3d}.  We now apply our more general theory, and
obtain locations of the perturbed manifolds as well as the Melnikov function.

\begin{figure}[t]
\centering
\includegraphics[scale=0.3]{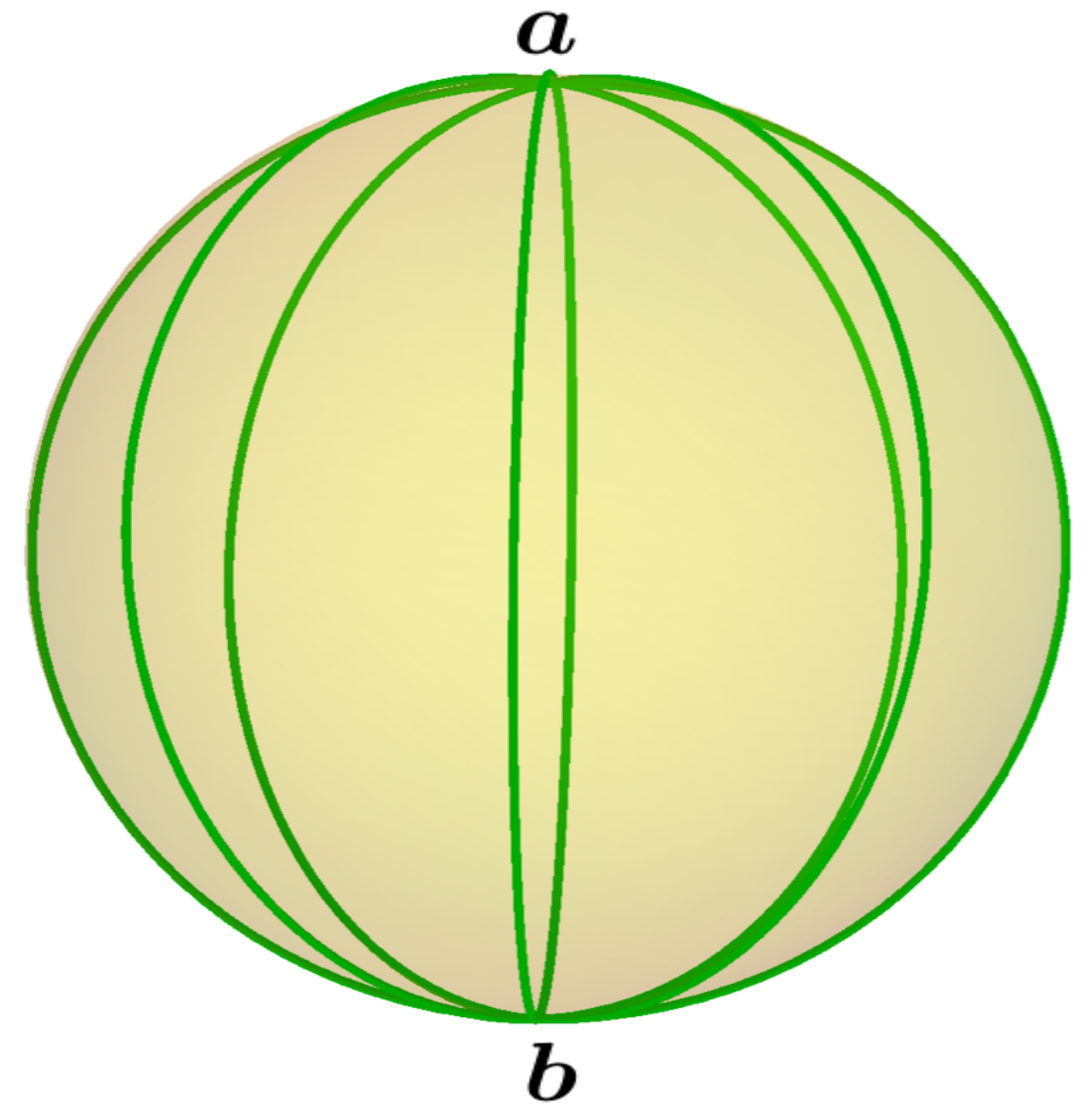}
\caption{The heteroclinic manifold $ \Gamma $ for the standard Hill's spherical vortex, with some $ \alpha = $
constant curves indicated in green.  In this steady flow, these curves are trajectories of associated with the velocity
field (\ref{eq:hill}), which flow along constant longitudes of the sphere from the north pole ($ \vec{a} $) to the south pole ($ \vec{b} $).}  
\label{fig:hill}
\end{figure}

Now, $p$ represents the time-variation along a heteroclinic trajectory $\bar{\vec{x}}(p,\alpha)$, and is thus
functionally related to $ \theta $, the latitude coordinate.  Each heteroclinic trajectory is given in $ (r,\theta,\phi) $-coordinates as
$(1, \bar{\theta}(p), \phi) $, where  $\bar{\theta}$ can be found via the velocity along a longitude:
\begin{equation}
\frac{d \bar{\theta}}{d p} = \frac{3}{2} \sin(\bar{\theta}) \quad ,  \quad {\mathrm{and~so}} \quad 
\bar{\theta}(p) = \cos^{-1} \left( - \tanh \frac{3 p}{2} \right) \, , 
\label{eq:theta}
\end{equation} 
where we have chosen $\bar{\theta}(0) = \pi/2 $, i.e., $ p = 0 $ at the equator for every heteroclinic trajectory,
and this form of inverse trigonometric function gives the principal branch $ \theta \in [0,\pi] $ as required.
The general heteroclinic-trajectory parameterization of
$ \Gamma $ (in $(r, \theta, \phi)$ form) is therefore given by
 \[
\bar{\vec{x}}(p, \alpha) = \left( 1, \cos^{-1} \left( - \tanh \frac{3 p}{2} \right),  2 \pi \alpha \right) \, , 
\]
in $ (r,\theta,\phi) $ components.  Given that $ \sin(\bar{\theta}(p)) = \sech \left(\frac{3 p}{2} \right) $, 
we note that
\[
\vec{f}(1, \bar{\theta}(p), \phi) = \frac{3}{2} \sin(\bar{\theta}(p)) \hat{\vec{\theta}} = \frac{3}{2} \sech \frac{3 p}{2} \, \hat{\vec{\theta}} \, .
\]
Next, the derivative of $ \bar{\vec{x}} $ with respect to $ \alpha $ is quantified by
\[
\bar{\vec{x}}_{\alpha }(p, \alpha) =  (1 \sin \bar{\theta}(p)) \frac{\partial(2 \pi \alpha)}{\partial \alpha} \hat{\vec{\phi}} 
= 2 \pi \sech \frac{3 p}{2} \, \hat{\vec{\phi}} \, , 
\]
and so
\[
\vec{f}(1, \bar{\theta}(p), \phi) \wedge \bar{\vec{x}}_{\alpha }(p, \alpha) = 3 \pi \sech^{\! 2} \frac{3 p}{2} ~ \hat{\vec{r}} \, .
\]
A general perturbation $\vec{g} $ in (\ref{sys}) would be expressible in $ (r,\theta,\phi) $-coordinates
as 
\[
\vec{g}(r,\theta,\phi, t) = g_r(r,\theta,\phi, t) \hat{\vec{r}} + g_{\theta}(r,\theta,\phi, t) \hat{\vec{\theta}} + g_{\phi}(r,\theta,\phi, t) \hat{\vec{\phi}} \, .
\]
We need neither specify that $ \vec{g} $ be volume-preserving, nor time-periodic.  When $ \epsilon \ne 0 $ but
is small, $ \vec{a} $ perturbs to a time-varying hyperbolic trajectory $ \vec{a}_\epsilon(t) $, retaining its unstable 
manifold $ \Gamma^u_\epsilon(\vec{a}_\epsilon,t) $ which remains close to $ \Gamma $. Now, its location is associated 
with the unstable Melnikov function, which using Theorem~\ref{theorem:melnikov_unstable} becomes
\[
M^u \left(p,\alpha, t \right) = 3 \pi \int_{-\infty}^{p} \sech^{\! 2} \frac{3 \tau}{2 } g_r \left(1,\cos^{-1} \left( - \tanh \frac{3 \tau}{2} \right) ,
2 \pi \alpha, \tau + t - p\right) \d \tau \, .
\]
By virtue of Remark~\ref{remark:unstableapprox}, this means that the part of $ \Gamma_\epsilon^u $  close to $ \Gamma $ can be approximately parameterized by
\begin{align*}
\vec{r}^u(p,\alpha,\epsilon,t) &\approx  \bar{\vec{x}}(p, \alpha) + \epsilon \, M^u(p, \alpha, t) \frac{ \vec{f}(\bar{\vec{x}}(p, \alpha)) \wedge \bar{\vec{x}}_\alpha(p, \alpha)}{\lvert \vec{f}(\bar{\vec{x}}(p, \alpha)) \wedge \bar{\vec{x}}_\alpha(p, \alpha)\rvert^2} \\
&= \left( 1 \! + \! \epsilon \cosh^2 \frac{3 p}{2} \int_{-\infty}^{p} \! \! \! \! \! \sech^{\! 2} \frac{3 \tau}{2 } \, g_r \left(1,\cos^{-1} \left( - \tanh \frac{3 \tau}{2} \right) ,2 \pi \alpha, \tau \! + \! t \! - \! p\right) \d \tau  ,\cos^{-1} \left( - \tanh \frac{3 p}{2} \right)  ,  2 \pi \alpha  \right) \,
 \, , 
\end{align*}
for $ (p,\alpha,t) \in (-\infty, P^u] \times {\mathrm{S}}^1 \times (-\infty,T^u] $ for finite
$ P^u $ and $ T^u $, in  $ (r,\theta,\phi ) $-component form.  We highlight that is only the $ r $-component of $ \vec{g} $ which contributes to the leading-order displacement of $ \Gamma $
in the direction normal to it.  
Similarly $ \vec{b}_\epsilon(t) $'s stable manifold $ \Gamma_\epsilon^s(\vec{b}_\epsilon,t) $ is approximately
parameterizable via (calculations not shown)
\[
\vec{r}^s(p,\alpha,\epsilon,t) \approx \left( 1 \! - \! 
\epsilon \cosh^2 \frac{3 p}{2} \int_{p}^{\infty} \! \! \! \sech^{\! 2} \frac{3 \tau}{2 } \, g_r \left(1,\cos^{-1} \left( - \tanh \frac{3 \tau}{2} \right)  ,2 \pi \alpha, \tau + t - p\right) \d \tau   ,  \cos^{-1} \left( - \tanh \frac{3 p}{2} \right) , 2 \pi \alpha \right) \, \, , 
\]
for $ (p,\alpha,t) \in [P^s,\infty) \times {\mathrm{S}}^1 \times [T^s,\infty) $ for finite
$ P^s $ and $ T^s $.  The stable and unstable manifolds will generically no longer coincide, and 
the distance between the manifolds at a space-time location $ (p,\alpha,t) $ is encoded within the Melnikov function
\begin{equation}
M \left(p, \alpha, t \right) = 3 \pi \int_{-\infty}^{\infty} \sech^{\! 2} \frac{3 \tau}{2 } \,  g_r \left(1, \cos^{-1} \left( - \tanh \frac{3 \tau}{2} \right) , 2 \pi \alpha, \tau + t - p\right) \d \tau \, ,
\label{eq:hill_melnikov}
\end{equation}
for $ (p,\alpha,t) \in [P^s,P^u] \times {\mathrm{S}}^1 \times [T^s,T^u] $
as obtained via Theorem~\ref{theorem:heteroclinic}.  We therefore have a fairly complete description of the perturbed
manifolds and their splitting, and can use (\ref{eq:hill_melnikov}), for example, to easily compute conditions on infinitely many transverse intersections
when $ \vec{g} $ has time-periodicity.

\begin{figure}[t]
\centering
\subfigure[]{\includegraphics[scale=0.3]{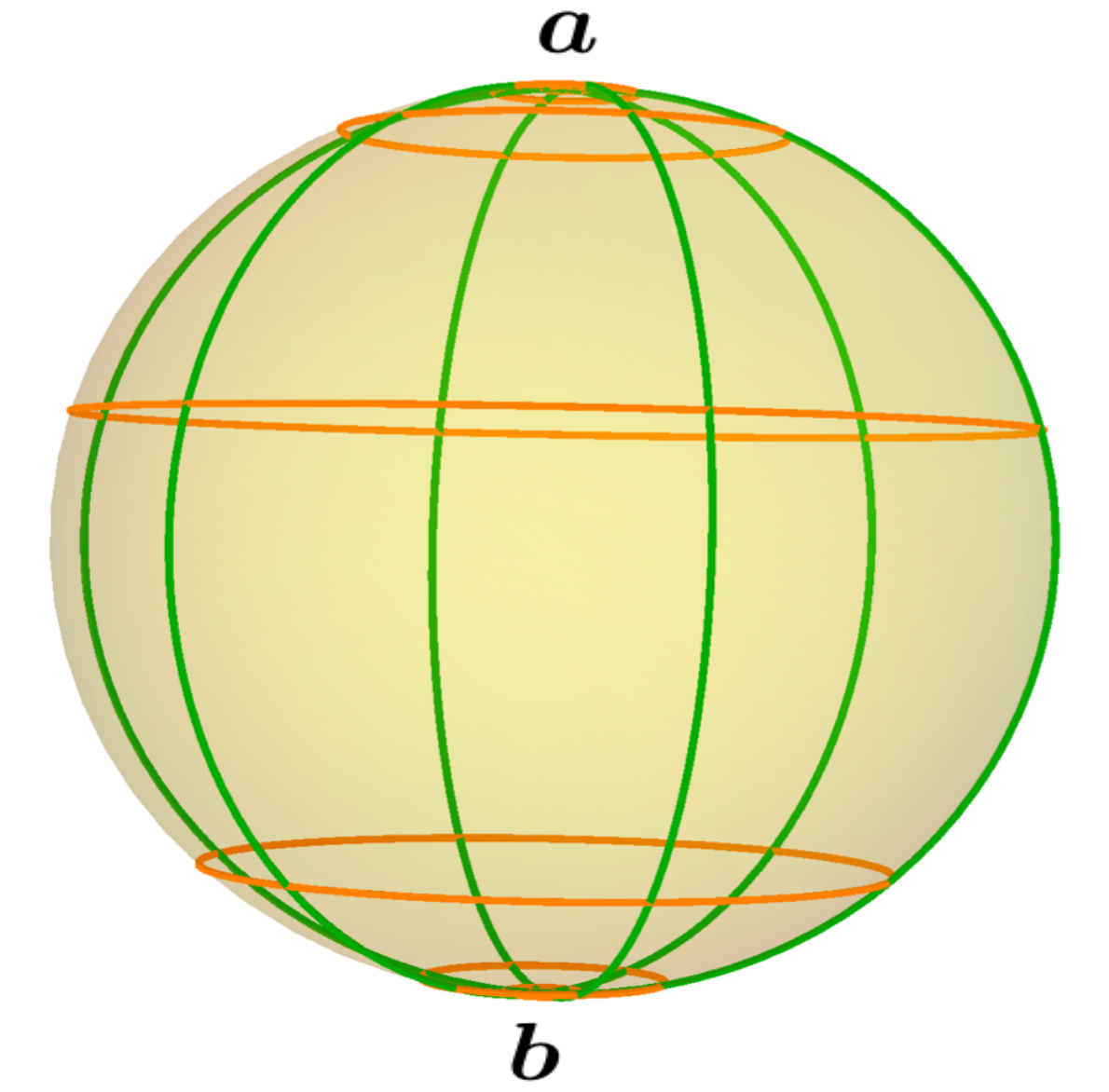}}
\hspace{0.15in}
\subfigure[]{\includegraphics[scale=0.35]{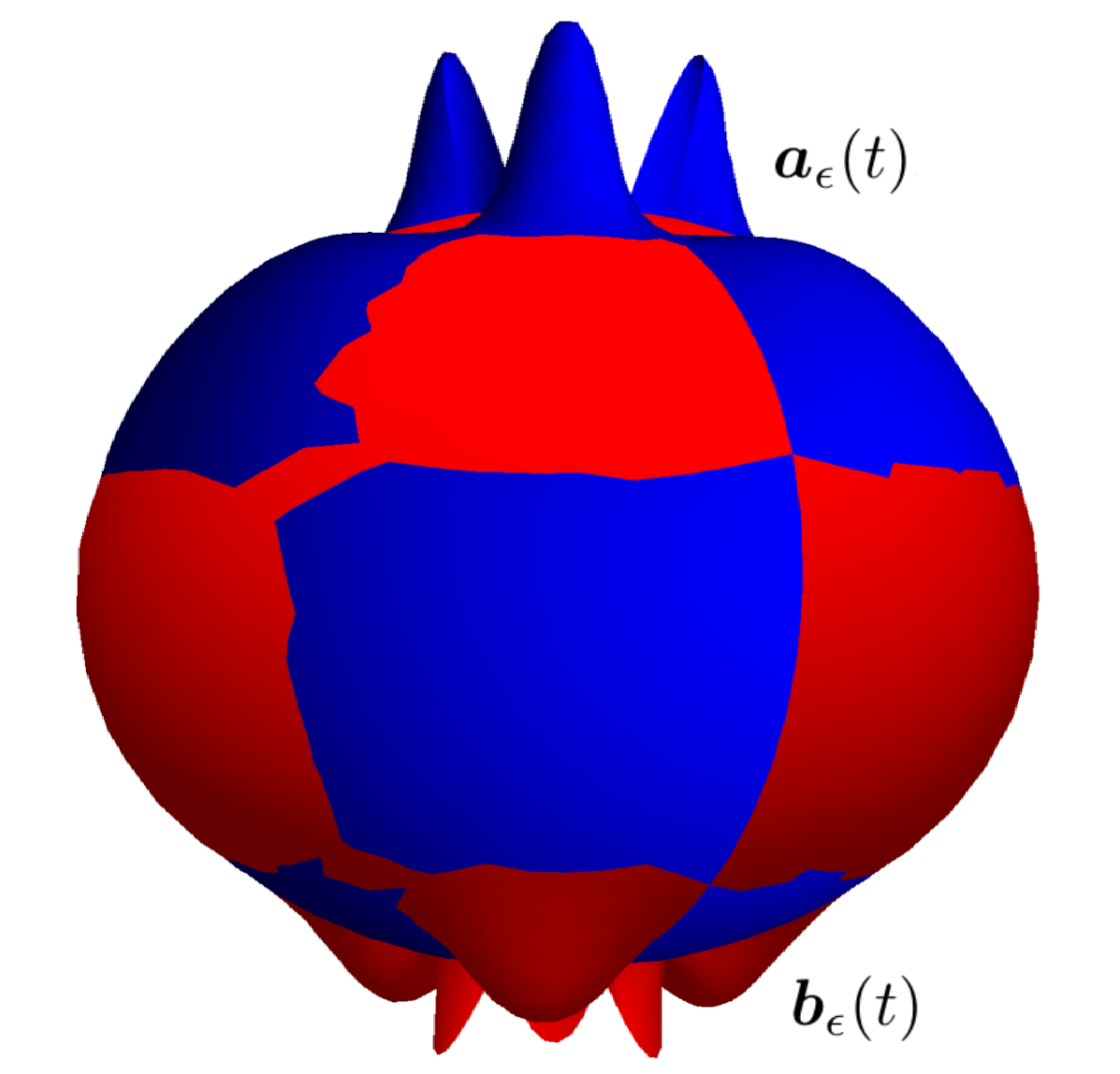}}
\caption{ (a) The zero contours of $ M $ associated with the perturbation
(\ref{eq:gr}) for the classical Hill's spherical vortex, at time $ t = 1 $ with the $ \theta $ and $ \phi $ constant curves shown in  green and orange respectively. (b) The resulting intersections between the
stable (blue) and unstable (red) manifolds, with the visible color being that of the exterior manifold.} 
\label{fig:hill_intersections}
\end{figure}

We will next demonstrate calculations for a chosen $ g_ r $, specifically
\begin{equation}
g_r \left(r, \theta, \phi, t \right) = r^2 \sin \theta \sin \left( 3 \phi \right)  \cos \left(4  t \right) \, .
\label{eq:gr}
\end{equation}
The integrand of each of the Melnikov functions then becomes
\[
\sech^{\! 2} \frac{3 \tau}{2}  \, r^2 \Big|_{r=1} \left( \sech \frac{3 \tau}{2} \right) \sin \left( 6 \pi \alpha \right) \cos \left[
4 \left( \tau + t - p \right) \right] = \sech^{\! 3} \frac{3 \tau}{2}  \sin \left( 6 \pi \alpha \right) \cos \left[
4 \left( \tau + t - p \right) \right] \, .
\]
Trigonometric addition formulas and odd/even-ness of integrands allow us to derive the {\em explicit}
analytic expression
\[
M(p,\alpha,t) = 6 \pi \sin(6 \pi \alpha)  \cos\left[ 4 (p-t)  \right] \int_{0}^{\infty} \! \sech^{\! 3} \left(\frac{3 \tau}{2}\right) \cos \left( 4 \tau \right) \d \tau = \frac{73 \, \pi^2}{9} \sech \frac{4 \pi}{3} \, \sin \left( 6 \pi \alpha \right) \cos \left[ 4 (p-t) \right] \, .
\]
Consider the perturbed manifolds at a fixed time $ t $.  If $ p $ satisfies $ p - t  \ne (2 k + 1) \pi/8 $ for $ k \in \mathbb{Z} $, $ M $ clearly has simple zeros
when $ \alpha = 0, 1/6, 1/3, 1/2, 2/3 $ and $ 5/6 $.  Thus, the two-dimensional perturbed manifolds intersect along along curves which are $ {\mathcal O}(\epsilon) $-close to these six constant longitude lines $ \phi = 0, \pi/3, 2 \pi/3, \pi, 
4 \pi/3 $ and $ 5 \pi/3 $ on $ \Gamma $.  Moreover, at
$ \alpha $-values not on these curves, $ M $ has simple zeros with respect to $ p $ when 
\[
p = p_k := t + (2k+1)\pi/8 \, , 
\]
i.e., on the latitudes defined by
\[
\theta = \theta_k := \cos^{-1} \left[ - \tanh \left( \frac{3 t}{2} + \frac{3(2k+1)\pi}{16} \right) \right] \quad ; \quad k \in \mathbb{Z} \, ,  
\]
which yields infinitely many unique values for $ \theta \in [0, \pi] $, accumulating towards both
$ \theta = 0 $ and $ \pi $.  At a chosen time $ t = 1 $, we illustrate in Fig.~\ref{fig:hill_intersections}(a) the zero contours
of $ M $.  Thus, the perturbed stable and unstable manifolds  at time $ 1 $ intersect along curves which are $ {\mathcal{O} }(\eps) $-close to these constant longitude and latitude
curves.     Crossing a zero contour implies that $ \Gamma_\eps^u $
flips from being outside $ \Gamma_\eps^s $ (or vice versa). The intersection of the approximated perturbed stable and unstable manifolds at time $t=1$ are illustrated in Fig.~\ref{fig:hill_intersections}(b).

 \begin{figure}[t]
	\centering
	\subfigure[]{\includegraphics[scale=0.29]{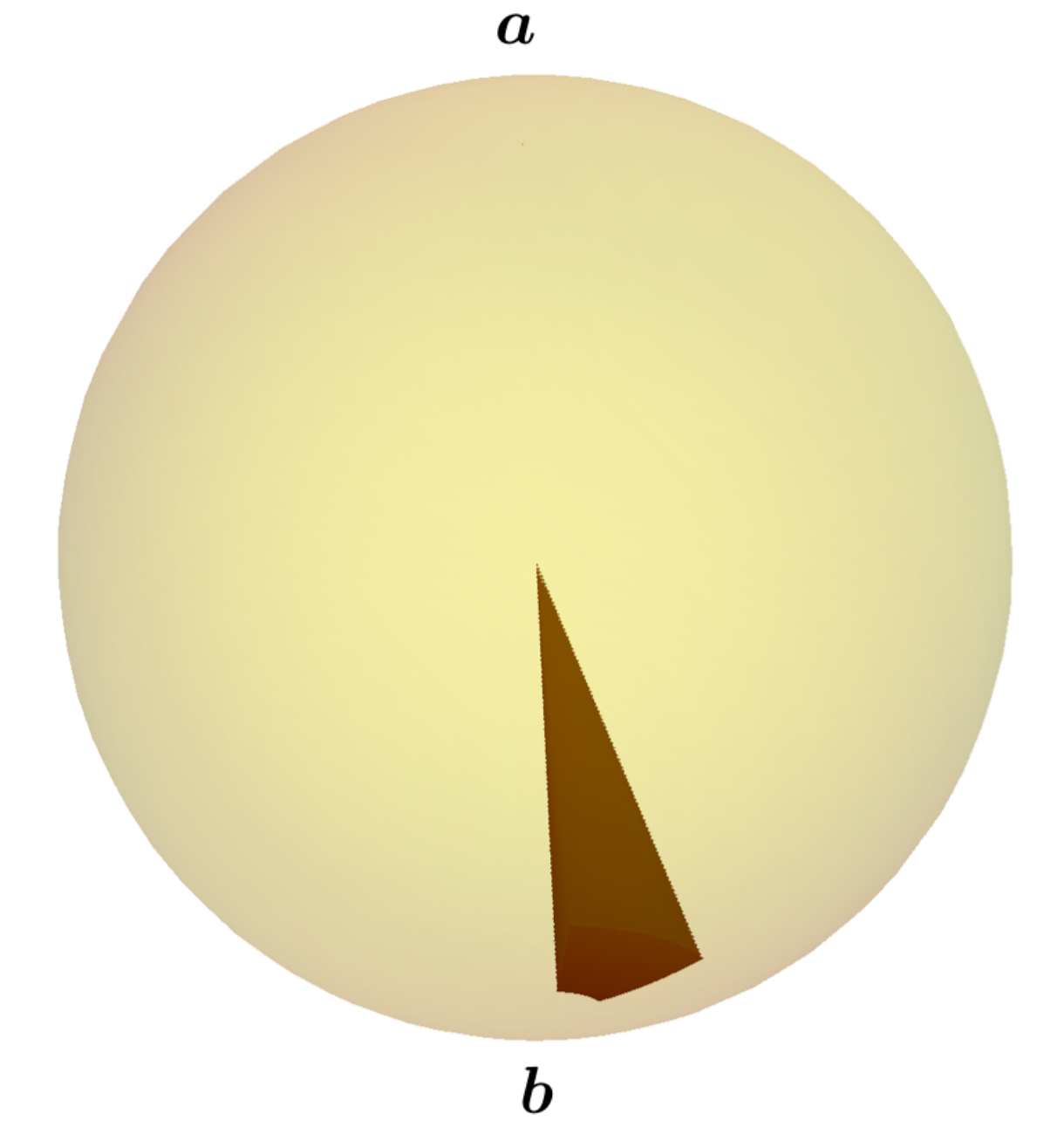}}
	\hspace{0.35in}
	\subfigure[]{\includegraphics[scale=0.288]{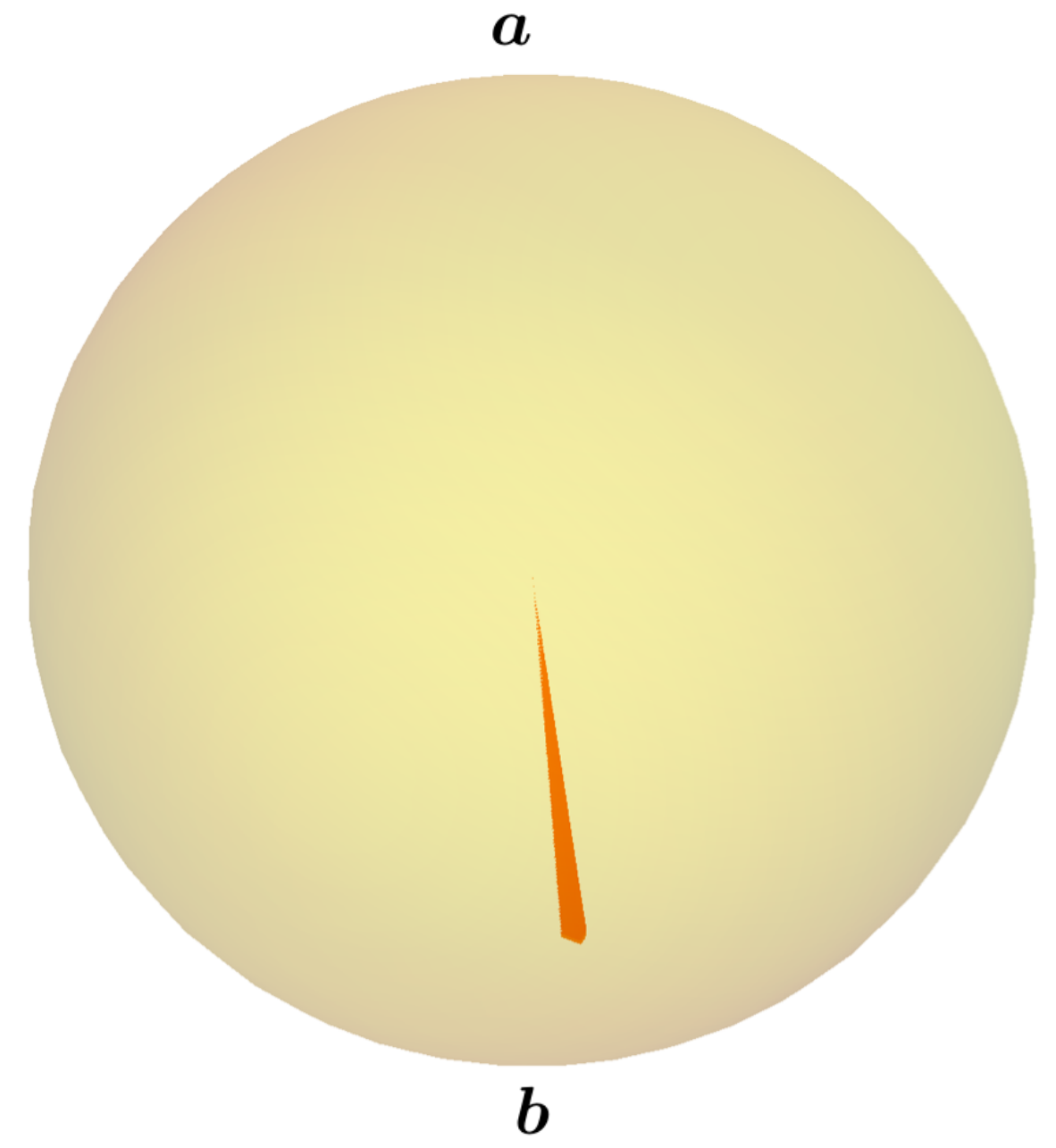}}
	\caption{ The lobes whose $ \alpha $ limits are between $ 0 $ and $ 1/6 $, and $ p $ limits are between 
		$ k = 1 $ and $ k = 2 $ as defined in $ p_k $ are shown for $t=0$ and $t=1$.  } 
	\label{fig:hill_region}
\end{figure}

We can approximate the volume of a lobe whose boundaries are given by adjacent zeros of $ p $ and $ \alpha $.  For example, consider the lobe whose $ \alpha $ limits are between $ 0 $ and $ 1/6 $, and $ p $ limits are between 
$ k = 1 $ and $ k = 2 $ as defined in $ p_k $.  The lobes whose are bounded by the above mentioned $p$ and $\alpha$ are shown for $t=0$ and $t=1$ in Fig.~\ref{fig:hill_region}. Using (\ref{eq:lobevolume}), we see that at a general value $t,$
\[
\mathrm{Lobe~volume} = \epsilon \frac{73 \, \pi^2}{9} \sech \frac{4 \pi}{3} \int_0^{1/6} \! \! \sin \left( 6 \pi \alpha \right) \, \d 
\alpha \int_{p_1}^{p_2} \! \! \cos \left[ 4(p-t) \right] \, \d p + {\mathcal O}(\epsilon^2) = 
\epsilon \frac{73 \, \pi}{54}  \sech \frac{4 \pi}{3} + {\mathcal O}(\epsilon^2) \, .
\]
As expected, the leading-order lobe volumes are identical for all lobes. They are also equal for
all times $ t $ (the apparent difference displayed in Fig.~\ref{fig:hill_region} is because while the
region in $ (p,\alpha) $ space subtended by the lobe reduces at $ t = 1 $, the lobe has a greater
extent in the normal direction to $ \Gamma $).

The instantaneous flux $\Phi \left(p, t, \eps\right)$ exiting the pseudo-separatrix, which is given in Theorem \ref{theorem:flux}, for this choice of perturbation can be computed as 
\begin{equation*}
   \Phi \left(p, t, \eps\right) = \frac{73 \,\eps\, \pi^2}{9} \sech \left(\frac{4 \pi}{3}\right) \,\cos \left[ 4 (p-t) \right]\, \int_0^1 \sin \left( 6 \pi \alpha \right)  \, d \alpha = 0.  
\end{equation*} 
The reason for the leading-order instantaneous flux to be zero (for any time $ t $ and any choice of gate location $ p $) is because of the symmetry of the splitting of the heteroclinic as intimated via Fig.~\ref{fig:hill_region}.  Regions along any constant latitude strip in which the unstable manifold is outside the stable manifold are complemented by regions in which the opposite occurs, while the leading-order velocity field along the strip remains constant.  Thus, there is an identical amount of flux crossing outwards as that crossing inwards. One would get a nonzero flux if the $ \sin (3 \phi) $ term in
$ g_r $ (which led to the integral $ \int_0^1 \sin \left( 6 \pi \alpha \right)  \, d \alpha $ in the above expression
for the flux) were replaced by a term which does {\em not} integrate to zero over all $ \phi $.

The expressions for $ \vec{r}^{u,s} $ also allow for approximating the perturbed manifolds for the choice of $ \vec{g} $
in (\ref{eq:gr}).  At a choice of time $ t = 1 $ and perturbation strength $ \eps = 0.1 $, the $ (p,\alpha) $-variables
 parameterize the manifolds, as given by the $ (r,\theta,\phi) $-coordinates in the expressions we have obtained for $ \vec{r}^{u,s} $.  We can visualize the manifolds by evaluating the curves $ \alpha = $ constant (with $ p $-varying along each
 such curve), and also the curves $ p = $ constant (with $ \alpha $ varying along each curve).  We show the approximate
 perturbed unstable manifold $ \Gamma_\eps^u $ in red in the left figure of Fig.~\ref{fig:hill_manifolds}, in comparison to the unperturbed
 $ \Gamma $ (black), which is also illustrated via plotting $ \alpha = $ constant and $ p =$ constant curves.  Similarly,
 we use $ \vec{r}^s $ in the same way to visualize the perturbed stable manifold $ \Gamma_\eps^s $ (in blue in the right
 figure). In
 these situations, we need to numerically approximate the integrals relevant to $ M^{u,s} $; this procedure could
 be followed in determining the perturbed manifolds at any time $ t $ for {\em any} given (sufficiently smooth and bounded) $ g_r $.
 
 \begin{figure}[t]
\centering
	\subfigure[]{\includegraphics[scale=0.40]{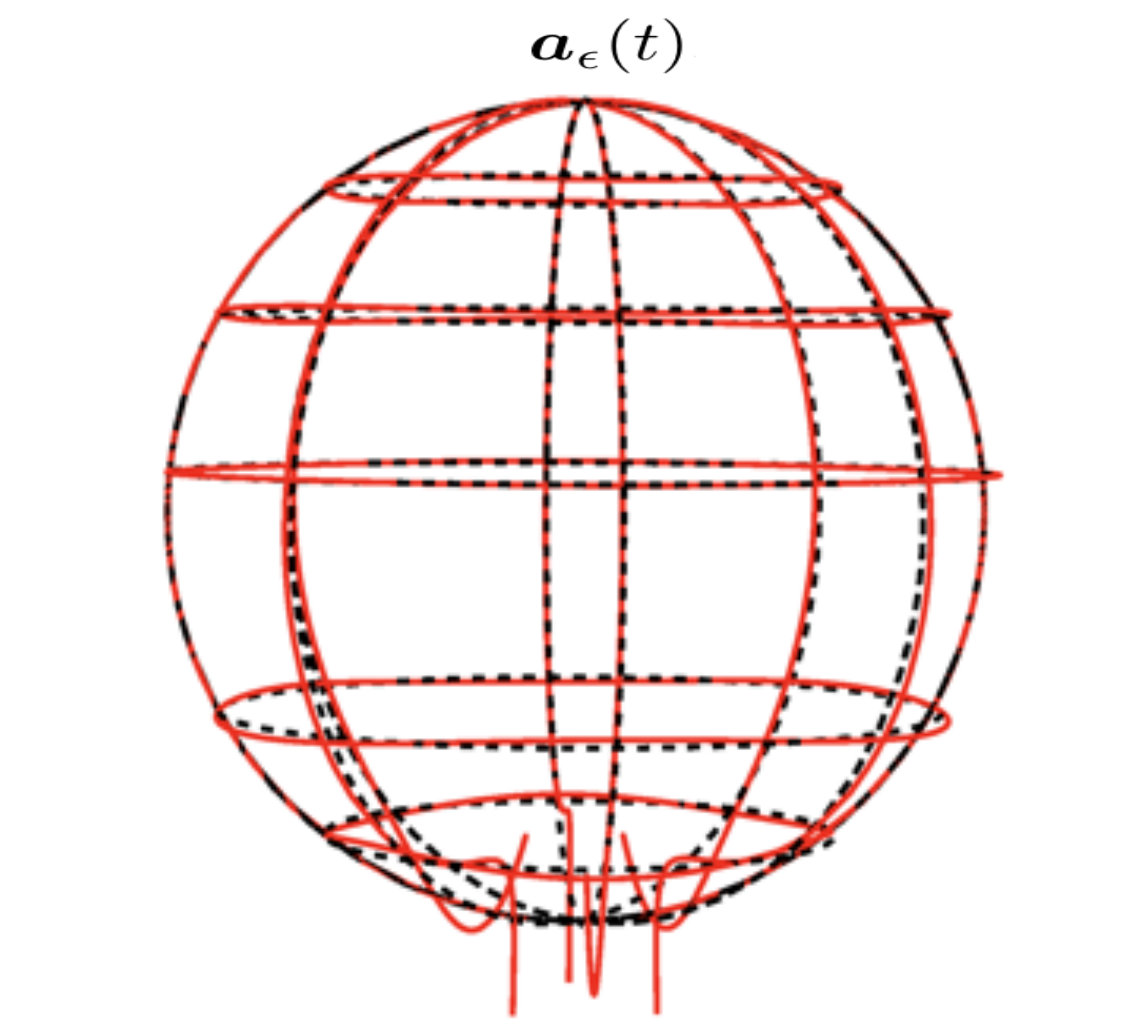}}
\hspace{0.15in}
\subfigure[]{\includegraphics[scale=0.42]{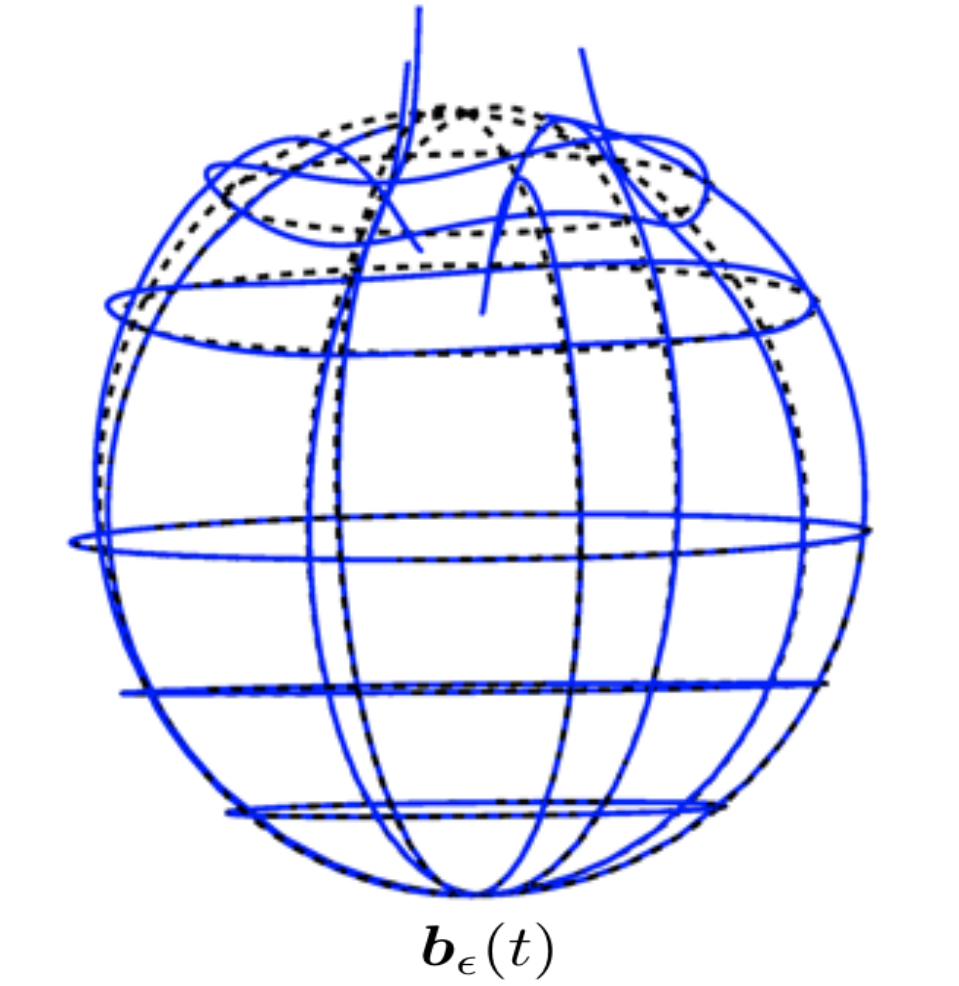}}
\caption{  For time $t = 1$ and $\epsilon = 0.1,$ (a) the perturbed unstable manifold (in red) and (b) the perturbed stable manifold (in blue), in comparison to the unperturbed manifold (in dashed black. Further, this illustration was performed via plotting $ \alpha = $ constant and $ p =$ constant curves, for the perturbation (\ref{eq:gr}).  } 
\label{fig:hill_manifolds}
\end{figure}

The approximation of the perturbed stable and unstable manifolds for the perturbation (\ref{eq:gr}) is given in Fig.~\ref{fig:hill_manifolds}. These two perturbed manifolds are drawn for $ t = 1 $ and $ \eps = 0.1. $ As the hyperbolic trajectory point $\vec{b}_\eps $ is approached along a constant 
longitude, the perturbed unstable manifold intersects the unperturbed manifold infinitely often. The classical `heteroclinic tangle' occurs when viewing the manifold intersections along such a constant
longitude.  The perturbative (in $ \eps $) approximation for the unstable manifold breaks down as this region is approached, because the manifolds stretch out substantially.  Similarly, we cannot approximate the perturbed stable manifold near the fixed point $\vec{a}$.  
We illustrat this behavior in Fig.~\ref{fig:hill_time}, where the intersections between the manifolds with several constant latitudes are shown at two different $ t $ values. The perturbed unstable trajectory approximated at time $t = 2$ near the ``Antarctic circle'' and the perturbed stable trajectory approximated at time $t = 2$ near the ``Arctic circle'' demonstrate significant 
deviation from the unperturbed manifold, an effect which is exacerbated at larger times. 


\begin{figure}[t]
\centering
\subfigure[]{\includegraphics[scale=0.30]{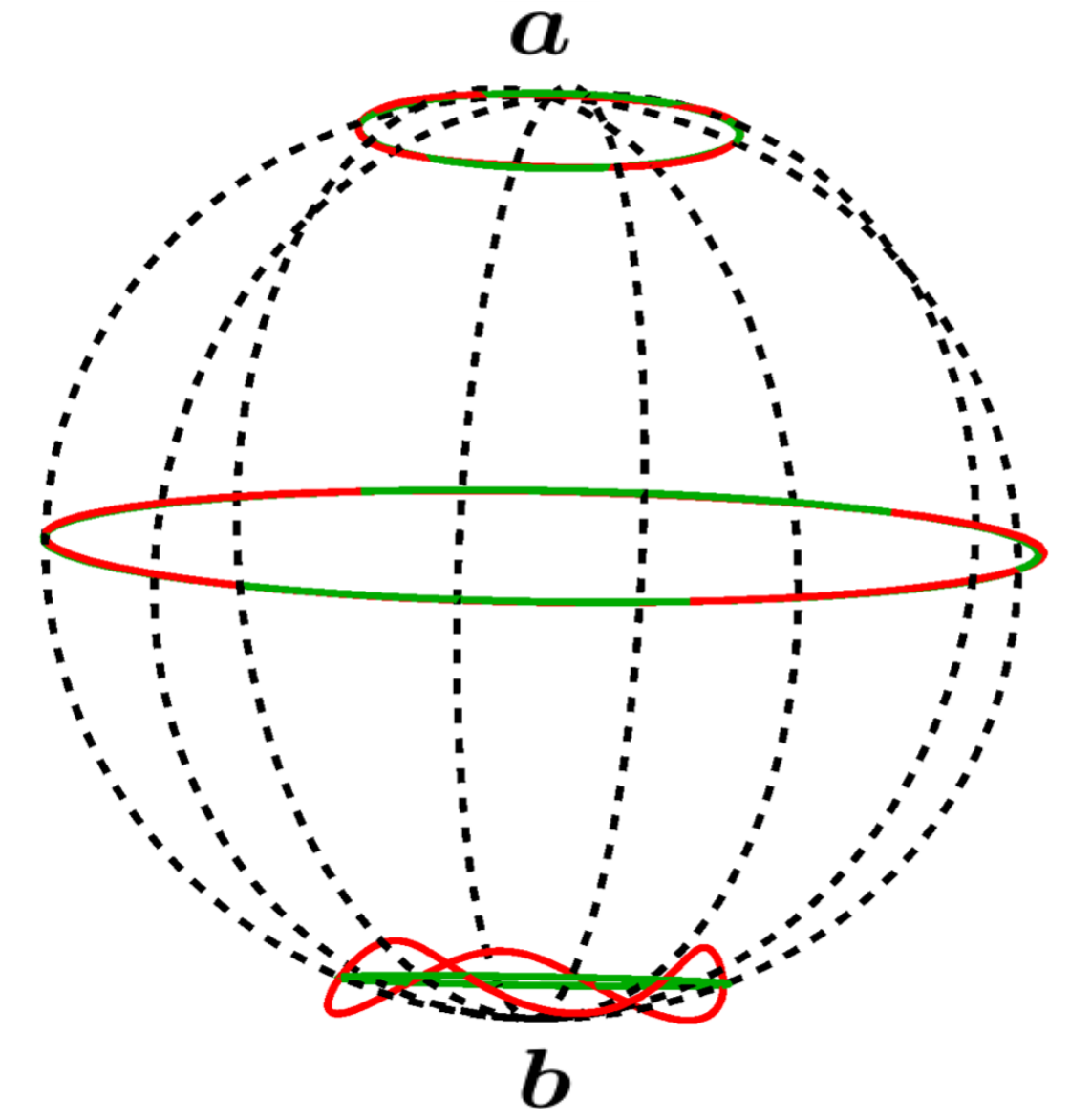}}
\hspace{0.25in}
\subfigure[]{\includegraphics[scale=0.30]{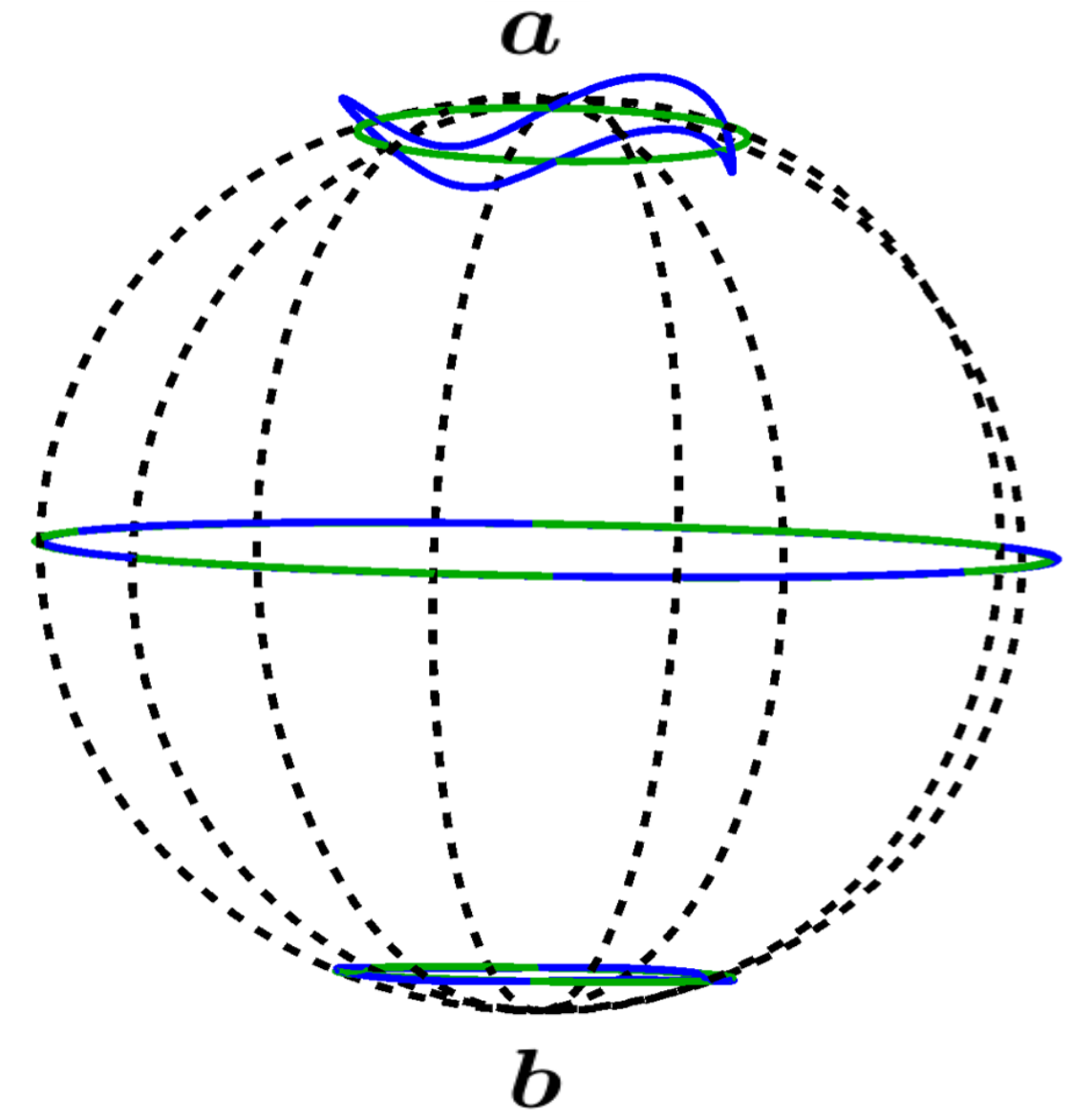}}
\subfigure[]{\includegraphics[scale=0.30]{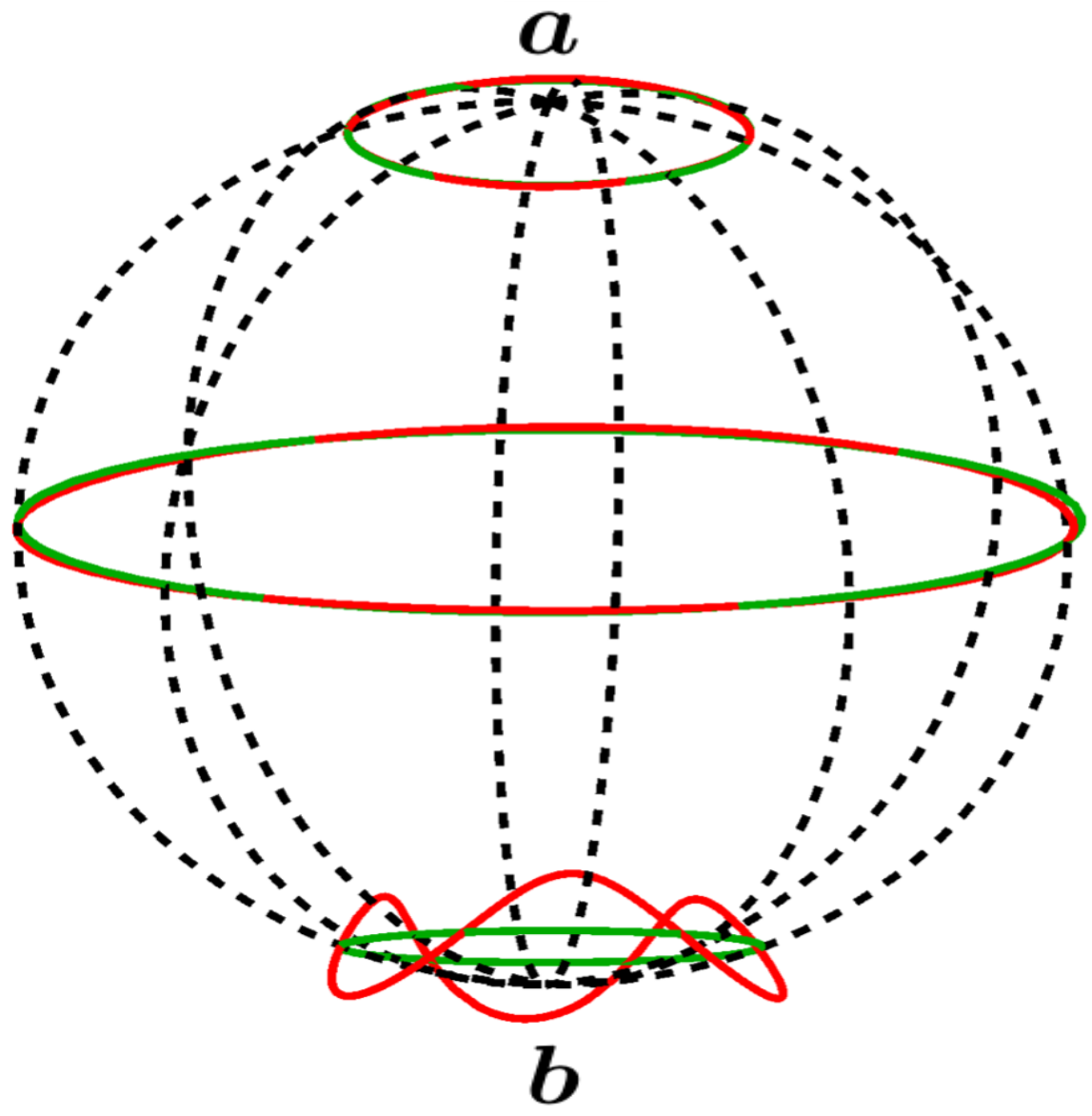}}
\hspace{0.25in}
\subfigure[]{\includegraphics[scale=0.30]{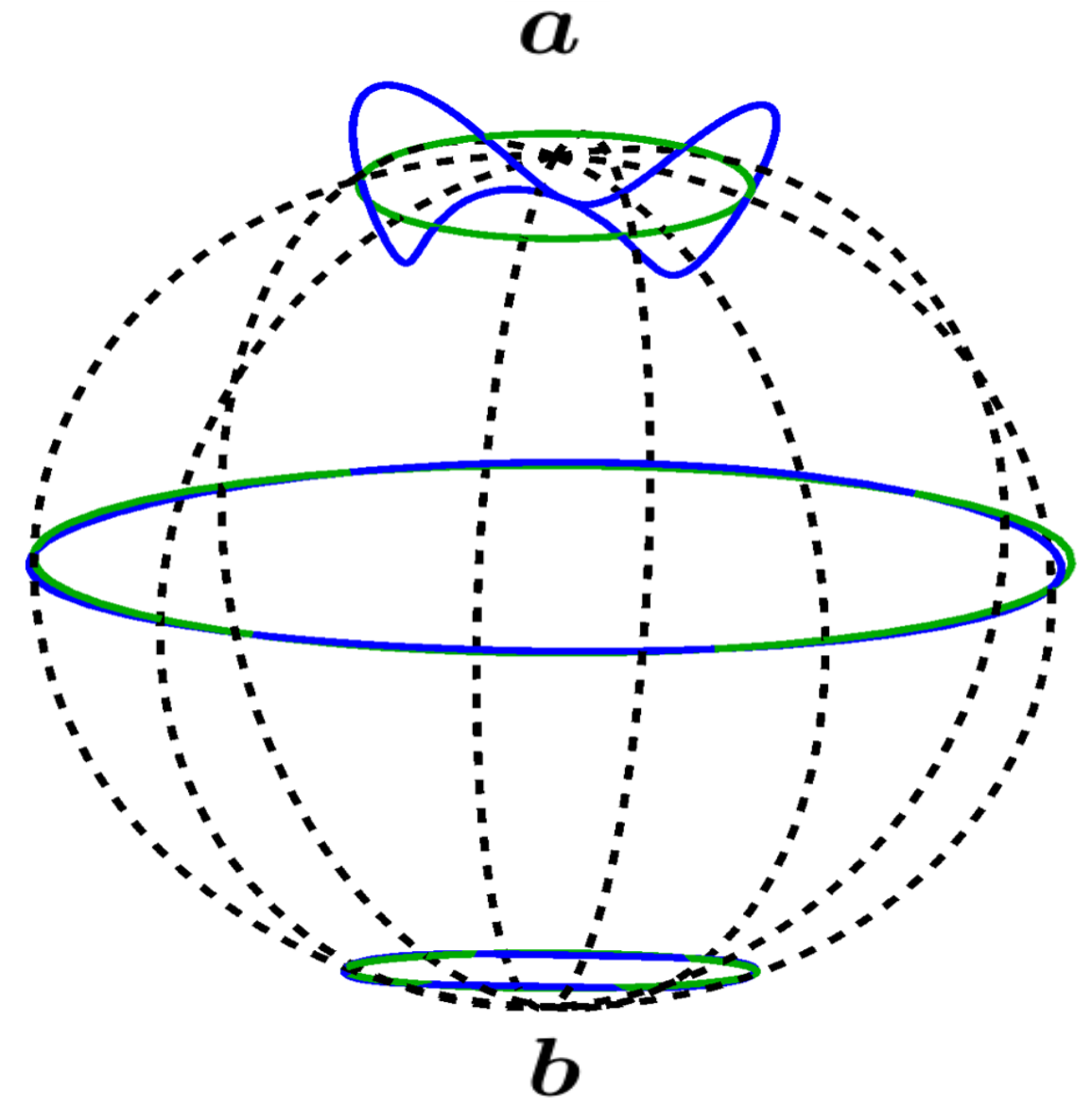}}
\caption{ Behavior of the perturbed unstable (red) and stable (blue) manifolds $ \Gamma_\eps^u $ and $ \Gamma_\eps^s $ with $ \eps = 0.1 $ for the classical Hill's spherical vortex, at $ \theta = 0.4 $ (approximate ``Arctic circle''), $ \theta = \pi/2 $ (``equator'') and $ \theta = \pi - 0.4 $ (approximate ``Antarctic circle''), with corresponding trajectories on unperturbed $ \Gamma $ (green), for the perturbation (\ref{eq:gr}), with the rows being $ t = 0 $ (top) and $ t = 2 $ (bottom).
} 
\label{fig:hill_time}
\end{figure}

\subsection{Hill's spherical vortex with swirl}
\label{sec:hill_swirl}

\begin{figure}[t]
\centering
\includegraphics[width=3.4in]{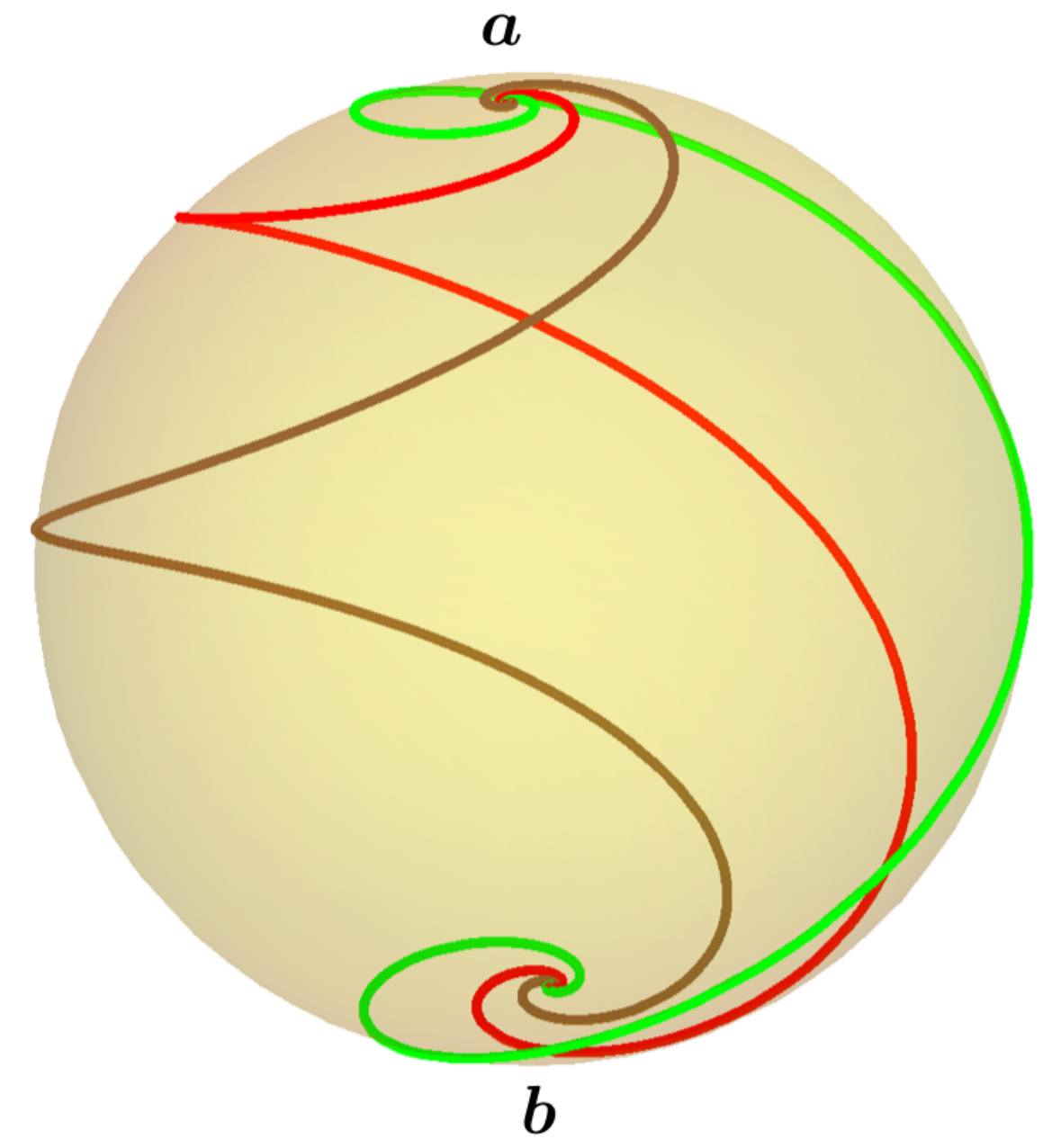}
\caption{Three different trajectories on $ \Gamma $ when $R_0=0.2$ for Hill's spherical vortex with swirl, identifiable
as $ \alpha = $ constant curves: $\alpha=0 $ (green), 
$ 1/4 $ (red) and $ 1/2 $ (brown).}
\label{fig:swirl_unp}
\end{figure} 

We now consider Hill's spherical vortex with an additional swirling component in the azimuthal direction, with 
the far-field flow remaining in the $ z $-direction as for the classical case.
The steady solution to the Euler equations in this case corresponds
to the continuous velocity field \cite{article_Swhill} 
\begin{equation}
\renewcommand{\arraystretch}{1.8}
\vec{f}(r, \theta, \phi) 
=  \left\{ \begin{array}{ll} 
\frac{3}{2} (1 - r^2) \cos(\theta)~ \hat{\vec{r}} - \frac{3}{2} (1 - 2 r^2) \sin(\theta) ~\hat{\vec{\theta}} - \frac{r \sin(\theta)}{2 R_0} ~\hat{\vec{\phi}} & ~~~~~ \text{if} ~~ r \le 1 \\
- \cos(\theta) \left \{1 - \frac{1}{r^2} \cos \left( \frac{r - 1}{R_0}\right) + \frac{R_0}{r^3} \sin \left( \frac{r - 1}{R_0}\right) \right \}  ~\hat{\vec{r}} & \\
~~ + \frac{\sin(\theta)}{2 r} \left \{2 r + \frac{1}{r} \cos \left( \frac{r - 1}{R_0}\right) + \left[\frac{1}{R_0} - \frac{R_0}{r^2} \right] \sin \left( \frac{r - 1}{R_0}\right) \right \}  ~\hat{\vec{\theta}} - \frac{r \sin(\theta)}{2 R_0} ~\hat{\vec{\phi}} & ~~~~~\text{if}  ~~ r > 1
\end{array} \right. \, , 
\end{equation}
where $ R_0 > 0 $ is the Rossby number. This velocity field is also divergence-free consonant with the incompressibility
assumption, and the classical Hill's spherical vortex is a special case in the
limit $R_0 \rightarrow \infty $.   The north and south poles $(r, \theta) = (1, 0)$ and $(1, \pi)$ are still saddle
points, and $ r = 1 $ is the heteroclinic manifold $ \Gamma $ connecting them.  However, heteroclinic trajectories swirl around
the globe, and spiral from the poles because the linearization at the poles results in complex 
conjugate eigenvalues.  As before, we parametrize $ \Gamma $  by $\bar{\vec{x}}(p,\alpha) = 
\left( 1, \bar{\theta}(p,\alpha), \bar{\phi}(p,\alpha) \right) $ in $ (r,\theta,\phi) $-coordinates, and we observe that
since the $ \theta $-component of the velocity is identical to the classical case, $ \bar{\theta} $ is
given by (\ref{eq:theta}), where we have chosen $ p = 0 $ to be on the equator.  Next, since $ \phi $ is changing 
at the constant rate $ - 1 / (2 R_0) $, we have
\[
\bar{\phi}(p,\alpha) = \bar{\phi}(0,\alpha) - \frac{p}{2 R_0} = 2 \pi \alpha - \frac{p}{2 R_0} \, , 
\]
where we choose the parameterization on the equator such that the $ \phi $-coordinate on the
equator divided by $ 2 \pi $ gives the trajectory-identifying parameter $ \alpha \in {\mathrm{S}}^1 $.  Thus, in
$ (r,\theta,\phi) $-component form, we have
\[
\bar{\vec{x}}(p,\alpha) = \left( 1, \cos^{-1} \left( - \tanh \frac{3 p}{2} \right), 2 \pi \alpha - \frac{p}{2 R_0}  \right) \, .
\]
We show three different trajectories on $ \Gamma $ in Fig.~\ref{fig:swirl_unp} using $ R_0 = 0.2 $ (which is the
choice used in all computations shown). 
 We note that 
the $ p $-variation in the $ \theta $-coordinate is equivalent to the time-variation in this steady situation, and changing $ \alpha $ only
affects the $ \phi $-component. 
Thus, despite $ \bar{\vec{x}} $ having a slight difference, we have $
\vec{f}(1, \bar{\theta}(p), \bar{\phi}(p,\alpha)) \wedge \bar{\vec{x}}_{\alpha }(\alpha , p) = 3 \pi \sech^{\! 2} (3 p / 2 )\hat{\vec{r}} $
as before.  Consequently, only very slight adjustments to the results for the classical Hill's vortex are necessary.
We will not bother to rewrite these equations, apart from stating that 
the $ \phi $-coordinate within the $ g_r $ function in all the integrals simply needs to change from
$ 2 \pi \alpha $ to $ 2 \pi \alpha - \tau / (2 R_0) $.  


\begin{figure}[t]
\centering
	\subfigure[]{\includegraphics[scale=0.35]{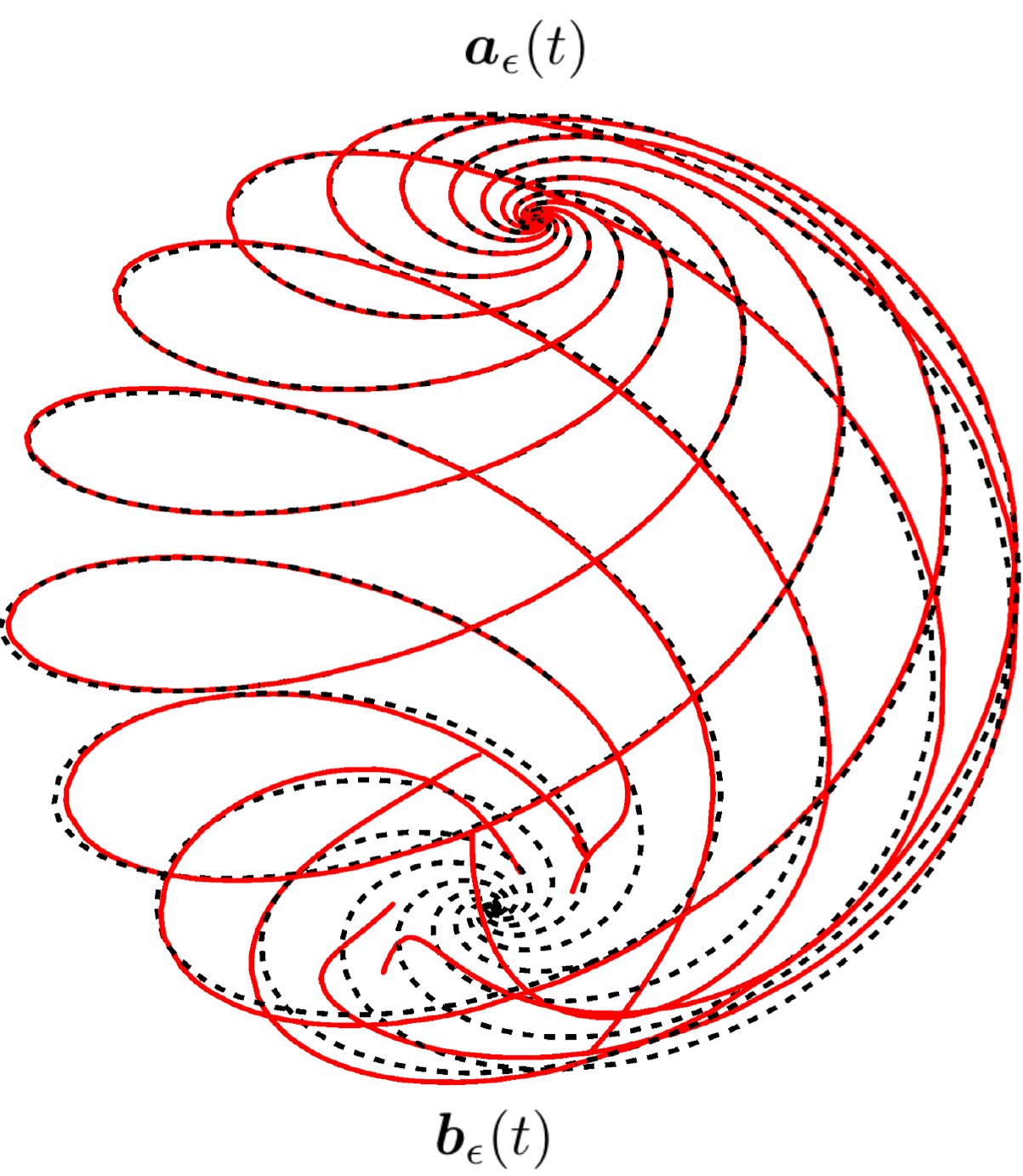}}
\hspace{0.15in}
\subfigure[]{\includegraphics[scale=0.36]{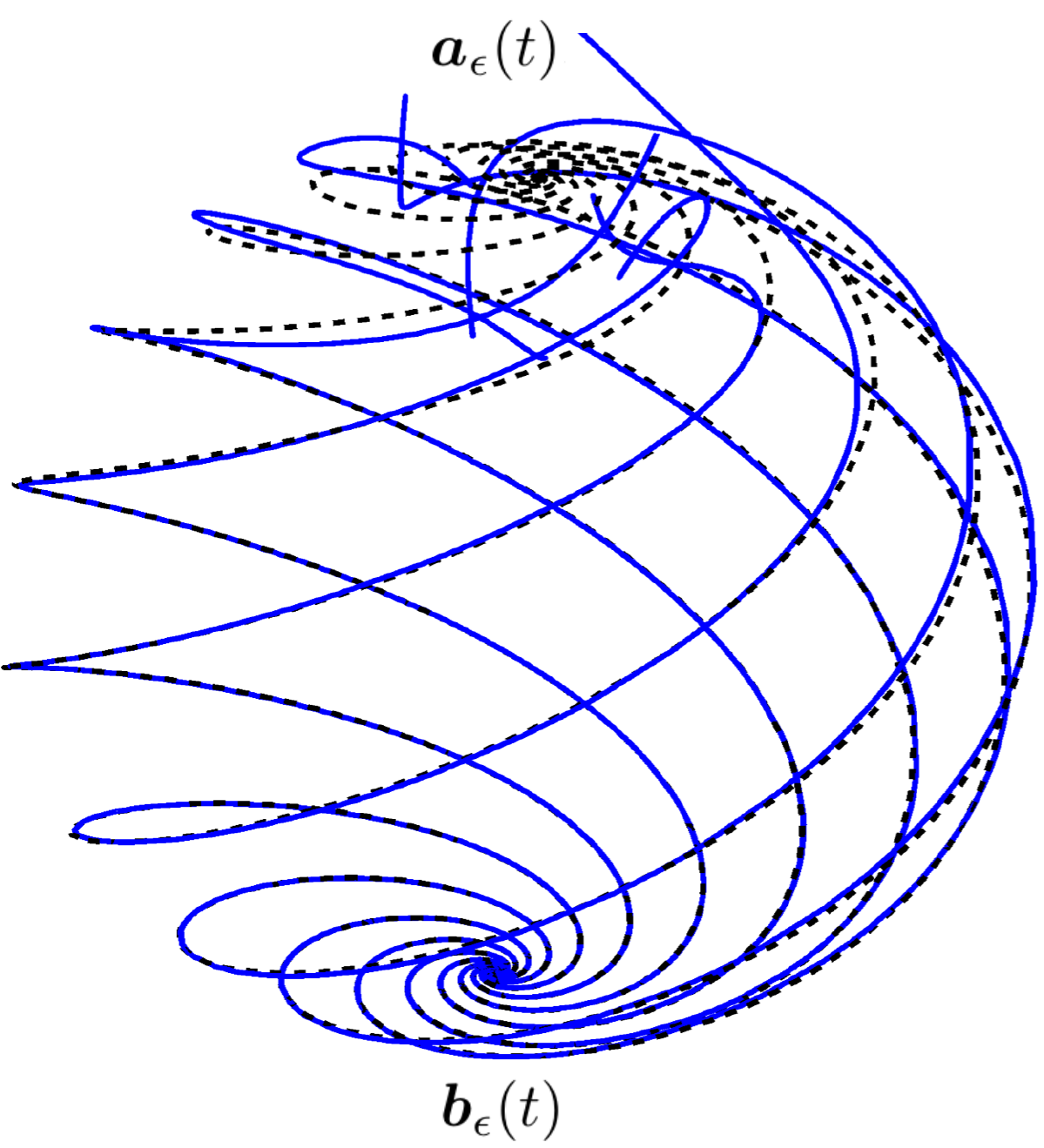}}
\caption{ The $\alpha =$ constant curves of perturbed unstable (red) and stable (blue) manifolds $ \Gamma_\eps^u $ and $ \Gamma_\eps^s $ at time $ t = 1 $ with $ \eps = 0.1, $ for Hill's spherical vortex with swirl $ R_0 = 0.2, $ for the perturbation (\ref{eq:gr}). The unperturbed $\alpha =$ constant curves are given in dashed-black.
}
\label{fig:swirl_manifolds}
\end{figure}

We can use the expressions for $ \vec{r}^{u,s} $ as before, but with this modification, to numerically approximate
the perturbed stable and unstable manifolds when $ \epsilon = 0.1 $.  Once again, we use
the perturbation $ \vec{g} $ as given in (\ref{eq:gr}).  The perturbed unstable (red) and stable (blue) manifolds at
time $ t = 1 $ are shown in Fig.~\ref{fig:swirl_manifolds}. We need to choose $p$ values carefully as these approximations for perturbed unstable and stable manifolds lose validity near the perturbed hyperbolic trajectory locations $\vec{b}_\eps(t)$ and $\vec{a}_\eps(t)$ respectively. 


The Melnikov function for this choice of $ g_r $ is
\begin{align*}
M(p,\alpha,t) &= 3 \pi \int_{-\infty}^\infty \sech^{\! 3} \frac{3 \tau}{2}  \sin \left( 6 \pi \alpha - \frac{3 \tau}{2 R_0} \right) \cos \left[
4 \left( \tau + t - p \right) \right] \, \d \tau \\
&= 6 \pi \sqrt{A^2 \sin^2 \left( 6 \pi \alpha \right) + B^2 \cos^2 \left( 6 \pi \alpha \right)} \cos \left[  4(t-p) -
\tan^{-1} \frac{B \cot \left( 6 \pi \alpha \right)}{A} \right] \, , 
\end{align*}
where $ A $ and $ B $ are the values of the two improper integrals, which evaluate respectively to
\begin{align*}
A &=  \frac{\pi \left[ \left( 73 R_0^2 - 48 R_0 + 9 \right) \sech \frac{\pi(8 R_0 - 3)}{6 R_0} +
\left( 73 R_0^2 + 48 R_0 + 9 \right) \sech \frac{\pi(8 R_0 + 3)}{6 R_0} \right]}{108 R_0^2} \, , \quad  {\mathrm{and}} \\
B &=  \frac{\pi \left[ \left( 73 R_0^2 - 48 R_0 + 9 \right) \sech \frac{\pi(8 R_0 - 3)}{6 R_0} -
\left( 73 R_0^2 + 48 R_0 + 9 \right) \sech \frac{\pi(8 R_0 + 3)}{6 R_0} \right]}{108 R_0^2} \, .
\end{align*}
Consequently, at a fixed time $ t $, simple zeros of $ M $ occur along the collection of $ (p,\alpha) $ curves given by
\[
\tan \left[ 4(t-p) - \frac{(2k+1)\pi}{2} \right] = \frac{B}{A} \cot \left( 6 \pi \alpha \right) \quad ; \quad
k \in \mathbb{Z} \, .
\]
In view of the $ \pi $-periodicity of the tangent function, the $ k $-dependence disappears, and we
may as well take $ k = 0 $.
When expressed in $ (\theta,\phi) $ coordinates, the condition therefore becomes
	\begin{equation}
		- \cot \left[ 4 \left( t + \frac{2}{3} \tanh^{-1} \cos \theta \right) \right] = \frac{B}{A}
	\cot \left( 3 \phi - \frac{1}{R_0} \tanh^{-1} \cos \theta \right) \, .	
	\label{trigeq}
\end{equation}
There are six zero contour curves (in green) on $ \Gamma $, as shown in Fig.~\ref{fig:swirl_intersections}(a). 
\begin{figure}[t]
\centering
\subfigure[]{\includegraphics[scale=0.37]{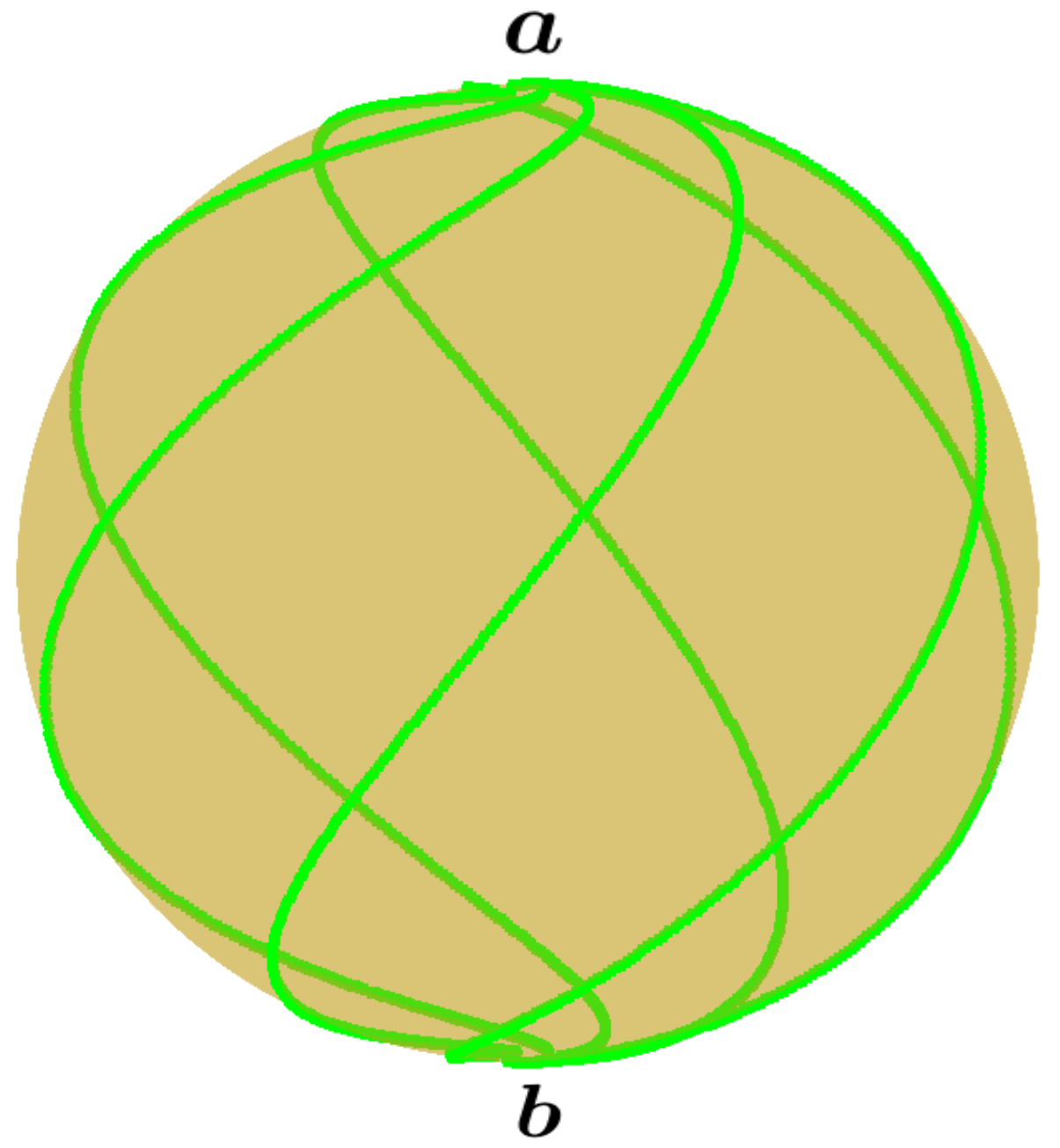}}
\hspace{0.25in}
\subfigure[]{\includegraphics[scale=0.37]{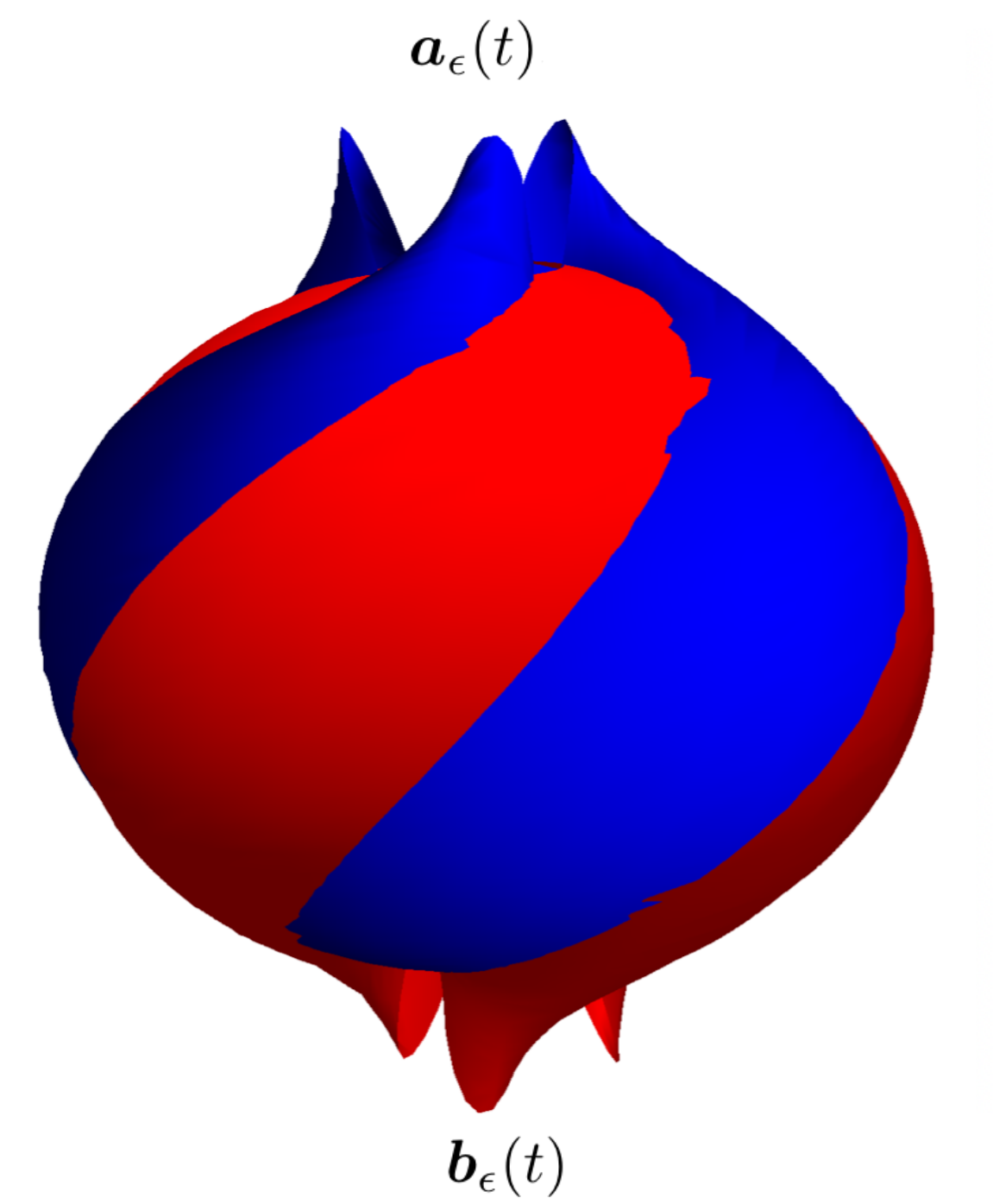}}
\caption{The zero contours of $ M $ associated with the perturbation
(\ref{eq:gr}) for Hill's spherical vortex with swirl with $ R_0 = 0.2 $, at time $ t = 1 $.} 
\label{fig:swirl_intersections}
\end{figure}
Next we display the intersection of perturbed stable (blue) and unstable (red) manifolds in Fig.~\ref{fig:swirl_intersections}(b) at $t = 1$ for the perturbation (\ref{eq:gr}). 

Finally the instantaneous flux $\Phi \left(p, t, \eps\right)$ exiting the pseudo-separatrix for the  Hill's spherical vortex with swirl is
\begin{equation*}
   \Phi \left(p, t, \eps\right) = 3 \pi \int_{-\infty}^\infty \sech^{\! 3} \left(\frac{3 \tau}{2} \right) \cos \left[
4 \left( \tau - t + p \right) \right] \,\int_0^1 \sin \left( 6 \pi \alpha - \frac{3 \tau}{2 R_0} \right) \,\d \alpha \, \d \tau = 0 \, ;
\end{equation*} 
again, this is zero because of the symmetry of the perturbation.  

\section{Concluding remarks}
\label{sec:conclude}

In this paper, we have developed a Melnikov theory to examine two-dimensional stable/unstable manifolds of hyperbolic points in three-dimensional flows. We require neither volume-preservation (in either the unperturbed or perturbed flow), nor time-periodicity.  Under fairly general conditions on the perturbation, we derive leading-order expressions for the time-varying location of the perturbed two-dimensional stable (or unstable) manifold. There is no requirement for the manifold to have been homoclinic or heteroclinic in this development.

The second goal of this paper is to characterize transport due a time-varying perturbation breaking apart a two-dimensional heteroclinic manifold in a three-dimensional flow.   We describe the three-dimensional analog of lobe dynamics \cite{romkedar,wigi_book} for the situation in which the perturbation is time-periodic, and develop expressions in terms of the Melnikov function for the leading-order volumes of lobes which lie between the perturbed stable and unstable manifolds at a general time.  If the unperturbed flow were volume-preserving, we show that the leading-order volumes of all lobes are identical, thereby allowing for this lobe volume to be a good quantifier of transport across the broken heteroclinic manifold via a three-dimensonal version of lobe dynamics.  Our more general contribution is that we can quantify the instantaneous flux engendered across the formerly impermeable heteroclinic manifold, building on a similar idea in two dimensions \cite{aperiodic}. If thinking in terms of three-dimensional fluid flows, we are thus able to characterize, in terms of the Melnikov function, the leading-order fluid flux, as a volume of fluid per unit time, crossing the broken heteroclinic manifold.  This is in a Lagrangian (as opposed to Eulerian) sense; the flux quantifies the transport of fluid particles following their flow history.  Thus, transport between the inside and the outside of the unperturbed heteroclinic manifold is captured by this theory, which allows for both non-volume-preservation and general time-dependence.

Higher-dimensional Melnikov methods, which develop a Melnikov function whose zeros are associated with persistent heteroclinic connections, usually have a function inside the integral which is known only as a fundamental solution to the adjoint of the equation of variations along an unperturbed heteroclinic trajectory.  This is not an explicit representation (except in the case of Hamiltonian systems, which are of course moreover limited to even dimensions), and therefore the Melnikov function is not {\em computable}.  Through our formulation in three dimensions, we derive this function automatically; this is what is inside the integral for both the general theory of locating two-dimensional manifolds, and in evaluating transport across a broken heteroclinic.  We note that this function we derive is valid even if volume is not preserved.  

The three-dimensional situation, with two-dimensional separating surfaces, is natural to study in the context of fluid flows.  As such, our work is expected to be of value in fluid transport: notably in quantifying locations of two-dimensional flow separators, and flux across broken ones.  Our formulation in the time-sinusoidal context in particular allows for a tool for optimizing mixing across separating surfaces, analogously to what has been done in optimizing mixing across one-dimensional separators in two-dimensional fluid flows \cite{optimal,frequency,l2mixer}.  Thus, applications to either maximizing transport (to empower good mixing of a fluid, or a two-phase fluid, in industrial applications), or minimizing it (to avoid pollutants contaminating a fluid) can be examined using the tools that we have developed in this article.

\vspace*{0.2cm}
\noindent {\bf {\em Acknowledgments:}} 
SB acknowledges support from the Australian Research Council under Grant DP200101764. EB is supported by the Army Research Office (N68164-EG) and DARPA, and  KGDSP is supported directly by Clarkson University.

\appendix

\section{Proof of Theorem~\ref{theorem:melnikov_unstable} (Displacement of unstable manifold)}
\label{sec:normaldis3} 

Suppose $ \epsilon \ne 0 $.  Fix $t \in (-\infty, T]$ and $ (p,\alpha) \in (-\infty,P] \times \mathrm{S}^1 $.  Let $\tau \in (-\infty, t]$ be a general
time value.  If  $\vec{x}^u(p, \alpha,\epsilon, t)$ (a point on the unstable manifold at time $ t $) is close to the point $\bar{\vec{x}}^u(\alpha, p)$ on the unperturbed manifold., then we realize that the appropriate parameterization at a general time $ \tau $ should
be chosen such that  $\vec{x}^u(p, \alpha,\epsilon, \tau)$ is the point on perturbed unstable manifold  close to the point 
$\bar{\vec{x}}^u(\tau - t + p,\alpha)$.  To this end, we define
\begin{equation}
\vec{z} ^u(p,\alpha,\epsilon,\tau) := \frac{1}{\epsilon} \left[ \vec{x}^u(p, \alpha,\epsilon, \tau) - \bar{\vec{x}}^u(\tau - t + p,\alpha) 
\right] \quad , \quad
\tau \in (-\infty, t] \, .
\label{eq:zu_define}
\end{equation}
Since $ T $ is finite, the difference between the perturbed and unperturbed trajectories is $ {\mathcal O}(\epsilon) $, 
and thus $ \vec{z}^u = {\mathcal O}(1) $, or more precisely there exists $ K \in \rm I\!R$, which is independent of $\tau$ and $\epsilon \in [0,\epsilon_{0}]$ for some $\epsilon_{0}$ and for finite $T$, 
\begin{equation}
\lvert \vec{z}^{u}(p,\alpha,\epsilon, \tau) \rvert \leq K \quad \text{for}~ \tau \in (-\infty,T].
\label{eq1bounded}
\end{equation}
Next, we define $\widetilde{M}^u(p,\alpha,\epsilon,\tau)$ as 
\begin{equation}
\widetilde{M}^u(p,\alpha,\epsilon,\tau) := \left[\vec{f}(\bar{\vec{x}}^{u}(\tau - t + p,\alpha)) \wedge \bar{\vec{x}}_\alpha^{u}
(\tau - t + p,\alpha) \right] \cdot \vec{z}^{u}(p,\alpha,\epsilon,\tau). 
\label{eq:mel_define}
\end{equation}
Given that 
\begin{align*}
\widetilde{M}^u(p,\alpha,\epsilon,t) &= \left[\vec{f}(\bar{\vec{x}}^{u}(p,\alpha)) \wedge \bar{\vec{x}}_\alpha^{u}
(p,\alpha) \right] \cdot \vec{z}^{u}(p,\alpha,\epsilon,t) \\
&= \left[\vec{f}(\bar{\vec{x}}^{u}(p,\alpha)) \wedge \bar{\vec{x}}_\alpha^{u}
(p,\alpha) \right] \cdot \frac{1}{\epsilon} \left[ \vec{x}^u(p, \alpha,\epsilon, t) - \bar{\vec{x}}^u(p,\alpha) \right] \, , 
\end{align*}
we note from (\ref{eq:distance_unstable}) that
\begin{equation}
d^u(p,\alpha,\epsilon,t) = \epsilon \frac{\widetilde{M}^u(p,\alpha,\epsilon,t) }{\lvert \vec{f}(\bar{\vec{x}}^{u}(p, \alpha)) \wedge \bar{\vec{x}}_\alpha^{u}(p, \alpha)\rvert} \, .
\label{eq:distance_melnikov}
\end{equation}
Consequently, we will determine $ d^u $ via an evolution equation for $ \widetilde{M}^u $ with respect to the
temporal variable $ \tau $.  Using the subscript as the notation for the partial derivative, taking $ \tau $-partial derivative
of (\ref{eq:mel_define}) yields
\begin{equation}
\begin{split}
\widetilde{M}^u_\tau(p,\alpha,\epsilon,\tau) &= \left[D\vec{f}(\bar{\vec{x}}^{u}(\tau - t + p,\alpha))~ \bar{\vec{x}}^{u}_\tau(\tau - t + p,\alpha) \wedge \bar{\vec{x}}_\alpha^{u}(\tau - t + p, \alpha) \right] \cdot \vec{z}^{u}(p,\alpha,\epsilon,\tau) \\& + \left[\vec{f}(\bar{\vec{x}}^{u}(\tau - t + p,\alpha)) \wedge \bar{\vec{x}}_{\alpha \tau}^{u}(\tau - t + p,\alpha) \right] \cdot \vec{z}^{u}(p,\alpha,\epsilon,\tau) \\&+ \left[\vec{f}(\bar{\vec{x}}^{u}(\tau - t + p,\alpha)) \wedge \bar{\vec{x}}_{\alpha}^{u}(\tau - t + p,\alpha) \right] \cdot \vec{z}^{u}_\tau(p,\alpha,\epsilon,\tau).
\label{eq8}
\end{split}
\end{equation}
We will build up simplifications for the many terms in (\ref{eq8}).
Since $\bar{\vec{x}}^{u}(\tau - t + p,\alpha)$ is a solution of unperturbed system $\dot{\vec{x}} = \vec{f}(\vec{x}) $, we
have 
\begin{equation}
\bar{\vec{x}}^{u}_\tau(\tau - t + p,\alpha) = \vec{f}\left(\bar{\vec{x}}^{u} (\tau - t + p, \alpha)\right) \, , 
\label{eq:temp1}
\end{equation}
Furthermore, since $\vec{x}^{u}(p,\alpha,\epsilon,\tau)$ is the solution of perturbed system $\dot{\vec{x}} = \vec{f}(\vec{x}) + \epsilon \vec{g}(\vec{x},t) $, we can write
\begin{align*}
\vec{x}^{u}_\tau(p,\alpha,\epsilon,\tau) &= \vec{f}(\vec{x}^{u}(p,\alpha,\epsilon,\tau)) + \epsilon \vec{g}(\vec{x}^{u}(p,\alpha,\epsilon,\tau),\tau) \\
&= \vec{f}(\bar{\vec{x}}^{u}(\tau - t + p,\alpha)) + \epsilon~ D\vec{f}(\bar{\vec{x}}^{u}(\tau - t + p,\alpha)) \vec{z}^{u}(p,\alpha,\epsilon,\tau)  \\
& \hspace*{0.6cm} + \frac{\epsilon^2}{2} \left[\vec{z}^{u}(p,\alpha,\epsilon,\tau)\right]^{\top} D^2\vec{f}(\vec{y}_1) \vec{z}^{u}(p,\alpha,\epsilon,\tau) \\
& + \epsilon \left[ \vec{g}(\bar{\vec{x}}^{u}(\tau - t + p,\alpha),\tau) + \epsilon~ D\vec{g}(\vec{y}_2,\tau)~ \vec{z}^{u}(p,\alpha,\epsilon,\tau) \right] \, , 
\end{align*}
where we have applied Taylor's theorem to both $ \vec{f} $ and $ \vec{g} $ about the spatial value $ \bar{\vec{x}}^{u} (\tau - t + p, \alpha) $ with deviation $ \epsilon \vec{z}^u(p,\alpha,\epsilon,\tau) $.  The notation $ D^2 $ represents the
Hessian matrix, and the unknown values $\vec{y}_i = \vec{y}_i(p,\alpha, \epsilon,\tau) $ ($ i = 1, 2 $) are located within $\epsilon K$ of  ~$\bar{\vec{x}}^{u}(\alpha,\tau - t + p)$.  Thus
\begin{align*}
\vec{z}^{u}_\tau(p,\alpha,\epsilon,\tau) &=  \frac{1}{\epsilon} \left[ \vec{x}^{u}_\tau(p,\alpha,\epsilon,\tau) - \bar{\vec{x}}^{u}_\tau (\tau - t + p, \alpha)\right] \\
&=
D\vec{f}(\bar{\vec{x}}^{u}(\tau - t + p,\alpha)) \vec{z}^{u}(p,\alpha,\epsilon,\tau) + 
\vec{g}(\bar{\vec{x}}^{u}(\tau - t + p, \alpha),\tau)\\& + \epsilon \left[ \frac{1}{2} \left[\vec{z}^{u}(p,\alpha,\epsilon,\tau)\right]^{\top} D^2\vec{f}(\vec{y}_1) + D\vec{g}(\vec{y}_2,\tau)\right] \left[\vec{z}^{u}(p,\alpha,\epsilon,\tau)\right] \, .
\end{align*}
Next, by the chain rule applied to (\ref{eq:temp1}), 
 \[
\bar{\vec{x}}^u_{\alpha \tau}(\tau - t + p,\alpha)= D \vec{f}(\bar{\vec{x}}^u(\tau - t + p,\alpha))~ \bar{\vec{x}}^u_\alpha(\tau - t + p, \alpha) \, , 
\]
By substituting all these into (\ref{eq8}), we get by separating out in orders of $ \epsilon $, 
	\[
	\begin{split}
	\widetilde{M}_\tau^{u}(p,\alpha,\epsilon,\tau)& = \left[D\vec{f}(\bar{\vec{x}}^{u}(\tau - t + p, \alpha))~ \vec{f}(\bar{\vec{x}}^{u}(\tau - t + p, \alpha)) \wedge \bar{\vec{x}}_\alpha^{u}(\tau - t + p, \alpha) \right] \cdot \vec{z}^{u}(p,\alpha,\epsilon,\tau) \\
	& + \left[\vec{f}(\bar{\vec{x}}^{u}(\tau - t + p, \alpha)) \wedge D \vec{f}(\bar{\vec{x}}^u(\tau - t + p, \alpha))~ \bar{\vec{x}}^u_\alpha(\tau - t + p, \alpha) \right] \cdot \vec{z}^{u}(p,\alpha,\epsilon,\tau)\\
	& + \left[\vec{f}(\bar{\vec{x}}^{u}(\tau - t + p, \alpha)) \wedge \bar{\vec{x}}_{\alpha}^{u}(\tau - t + p, \alpha) \right] \cdot \left[D\vec{f}(\bar{\vec{x}}^{u}(\tau - t + p, \alpha ))~ \vec{z}^{u}(p,\alpha,\epsilon,\tau) \right]\\& +\left[\vec{f}(\bar{\vec{x}}^{u}(\tau - t + p, \alpha)) \wedge \bar{\vec{x}}_{\alpha}^{u}(\tau - t + p, \alpha) \right] \cdot \vec{g}(\bar{\vec{x}}^{u}(\tau - t + p, \alpha),\tau)\\
	& + \frac{\epsilon}{2} ~\left[\vec{f}(\bar{\vec{x}}^{u}(\tau - t + p, \alpha)) \wedge \bar{\vec{x}}_{\alpha}^{u}(\tau - t + p, \alpha) \right] \cdot \left[  \left[\vec{z}^{u}(p,\alpha,\epsilon,\tau)\right]^{\top} D^2\vec{f}(\vec{y}_1)  \left[\vec{z}^{u}(p,\alpha,\epsilon,\tau)\right] \right] \\& + \epsilon ~\left[\vec{f}(\bar{\vec{x}}^{u}(\tau - t + p, \alpha)) \wedge \bar{\vec{x}}_{\alpha}^{u}(\tau - t + p, \alpha ) \right] \cdot \left[  D\vec{g}(\vec{y}_2,\tau)  \left[\vec{z}^{u}(p,\alpha,\epsilon,\tau)\right] \right] \, . 
	\end{split}
	\]
We now apply the identity given in Lemma~\ref{id1} (Appendix~\ref{sec:identi}) to simplify the first three terms 
(given in the first three lines) above.  By choosing $A = D\vec{f}(\bar{\vec{x}}^{u}(\alpha, \tau - t + p)),~ \vec{b} = \vec{f}(\bar{\vec{x}}^{u}(\alpha, \tau - t + p)), ~\vec{c} = \bar{\vec{x}}^{u}_\alpha(\alpha, \tau - t + p)$ and $\vec{d}=\vec{z}^u(p,\alpha,\epsilon,\tau)$,  
$\widetilde{M}_\tau^{u}(p,\alpha,\epsilon,\tau)$ can be recast as
	\begin{equation}
	\begin{split}
	\frac{\partial}{\partial \tau} \widetilde{M}^{u}(p,\alpha,\epsilon,\tau) &= \vec{\nabla} \cdot \vec{f}(\bar{\vec{x}}^{u}(\tau - t + p, \alpha))~ \widetilde{M}^{u}(p,\alpha,\epsilon,\tau)\\& +\left[\vec{f}(\bar{\vec{x}}^{u}(\tau - t + p, \alpha)) \wedge \bar{\vec{x}}_{\alpha}^{u}(\tau - t + p, \alpha) \right] \cdot \vec{g}(\bar{\vec{x}}^{u}(\tau - t + p, \alpha),\tau)
	+ \epsilon \, H(\tau) \, .
\label{eq:meldiffpert}
	\end{split} 	
	\end{equation}
where
\begin{align}
H(\tau) &:= \frac{1}{2} ~\left[\vec{f}(\bar{\vec{x}}^{u}(\tau - t + p, \alpha)) \wedge \bar{\vec{x}}_{\alpha}^{u}(\tau - t + p, \alpha) \right] \cdot \left[  \left[\vec{z}^{u}(p,\alpha,\epsilon,\tau)\right]^{\top} D^2\vec{f}(\vec{y}_1)  \left[\vec{z}^{u}(p,\alpha,\epsilon,\tau)\right] \right] \nonumber \\
	& + ~\left[\vec{f}(\bar{\vec{x}}^{u}(\tau - t + p, \alpha)) \wedge \bar{\vec{x}}_{\alpha}^{u}(\tau - t + p, \alpha) \right] \cdot \left[  D\vec{g}(\vec{y}_2,\tau)  \left[\vec{z}^{u}(p,\alpha,\epsilon,\tau)\right] \right] \, ,
	\label{eq:Htau}
	\end{align}
and we use $ {\mathrm{Tr}} \, D \vec{f} =
\vec{\nabla} \cdot \vec{f} $ (i.e., the divergence of $ \vec{f} $).
The differential equation (\ref{eq:meldiffpert}) is to be considered with the condition
\begin{equation}
\lim_{\tau \rightarrow \infty} \widetilde{M}^u(p,\alpha,\epsilon,\tau) = 0 \, , 
\label{eq:boundary}
\end{equation}
because from (\ref{eq:mel_define}) we see that $ \vec{f} \left(\bar{\vec{x}}^u(\tau - t + p, \alpha) \right) \rightarrow \vec{f}(\vec{a}) =
\vec{0} $ in this limit, with other terms in the definition remaining bounded.

We note that $ H(\tau) $ is bounded for $ \tau \in (-\infty,t] $ because of the boundedness of $ D^2 \vec{f} $, $ D \vec{g} $,
$ \bar{\vec{x}}^u(\tau-t+p,\alpha) $ (because this converges to $ \vec{a} $ as $ \tau \rightarrow - \infty $) and 
$ \vec{z}^u(p,\alpha,\epsilon,\tau) $.  Hence it makes sense to also consider
(\ref{eq:meldiffpert}) with $ \epsilon = 0 $, i.e., 
\begin{equation}
	\begin{split}
	\frac{\partial}{\partial \tau} {M}^{u}(p,\alpha,\tau) &= \vec{\nabla \cdot} \vec{f}(\bar{\vec{x}}^{u}(\tau - t + p, \alpha))~ {M}^{u}(p,\alpha,\tau)\\& +\left[\vec{f}(\bar{\vec{x}}^{u}(\tau - t + p, \alpha)) \wedge \bar{\vec{x}}_{\alpha}^{u}(\tau - t + p, \alpha) \right] \cdot \vec{g}(\bar{\vec{x}}^{u}(\tau - t + p, \alpha),\tau) \, , 
\label{eq:meldiffunp}
	\end{split} 	
	\end{equation}
whose solution $ M^u $ is also subject to the boundary condition (\ref{eq:boundary}).  It is easy to verify
that the linear differential equation (\ref{eq:meldiffunp}) with condition (\ref{eq:boundary}) has a solution
\begin{equation}
M^{u}(p,\alpha,t) = \int_{-\infty}^t \! \! e^{\int_{\tau}^t \nabla \cdot \vec{f}(\bar{\vec{x}}^{u}(\xi - t + p,\alpha)) d\xi} \left[ \vec{f}(\bar{\vec{x}}^{u}(\tau \! - \! t \! + \! p,\alpha)) \wedge \bar{\vec{x}}_{\alpha}^{u}(\tau \! - \! t \! + \! p,\alpha)\right] \cdot \vec{g}(\bar{\vec{x}}^{u}(\tau - t + p,\alpha),\tau)~ \d \tau \, . 
\label{eq:melu}
\end{equation}
by working via the integrating factor
\begin{equation}
\mu(\tau) = \exp\left[{-\int_0^\tau \nabla \cdot \vec{f}(\bar{\vec{x}}^{u}(\xi - t + p, \alpha)) \, \d \xi}\right] \, .
\label{eq:integratingfactor}
\end{equation}
The quantity $ M^u $ in (\ref{eq:melu}) is identical to the unstable Melnikov function as defined in (\ref{eq:melnikov_unstable}) after a change of integration variable $
\tau-t+p \rightarrow \tau $.  We show in Appendix~\ref{sec:convergent} that the improper integral (\ref{eq:melu})
is convergent, and so $ M^u $ is a well-defined solution to (\ref{eq:meldiffpert}) when $ \epsilon = 0 $.

Next, we show in Appendix~\ref{sec:close} that the solution $ \widetilde{M}^u(p,\alpha,\epsilon,t) $ of (\ref{eq:meldiffpert}) is within $\mathcal{O}(\epsilon)$ of $ M^u(p,\alpha,t) $. (This is not immediately obvious because integration over the noncompact
domain $ (-\infty,t] $ is necessary.)  This enables the replacement of $ \widetilde{M}^u(p,\alpha,\epsilon,t) $  with
$ M^u(p,\alpha,t) + {\mathcal O}(\epsilon) $ in (\ref{eq:distance_melnikov}), and consequently proves
Theorem~\ref{theorem:melnikov_unstable}.

\section{An important identity}
\label{sec:identi}

We introduce the following elementary identity which is valid for any $3 \times 3$ matrix and any $3 \times 1$ vectors. 
 
\begin{lemma}
	The following identity holds for any $3 \times 3$ matrix $A$ and any $\vec{b}, \vec{c}$ and $\vec{d}$ which are $3 \times 1$ vectors:
	\begin{equation}
		[ (A \vec{b}) \wedge \vec{c}] \cdot \vec{d} + \left[ \vec{b} \wedge (A \vec{c})\right] \cdot \vec{d} + \left[\vec{b} \wedge \vec{c}\right] \cdot (A \vec{d}) = {\mathrm{Tr}} \, (A)] \left[ (\vec{b} \wedge \vec{c}) \cdot \vec{d}\right] \, , 
	\end{equation}
	where $ \mathrm{Tr} \, \left( \centerdot \right) $ represents the trace operator.
	\label{id1}
\end{lemma}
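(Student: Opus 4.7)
The plan is to recognize both sides of the identity as the linear-in-$A$ terms in a common determinant expansion, which reduces the statement to a familiar multilinear-algebra fact. First I would rewrite every scalar triple product appearing in the identity in the form $(\vec{u}\wedge\vec{v})\cdot\vec{w} = \det[\vec{u}\,|\,\vec{v}\,|\,\vec{w}]$, treating each of the three vectors as a column of a $3\times 3$ matrix. With this substitution, the identity becomes
\begin{equation*}
\det[A\vec{b} \,|\, \vec{c} \,|\, \vec{d}] + \det[\vec{b} \,|\, A\vec{c} \,|\, \vec{d}] + \det[\vec{b} \,|\, \vec{c} \,|\, A\vec{d}] \;=\; \mathrm{Tr}(A)\,\det[\vec{b} \,|\, \vec{c} \,|\, \vec{d}],
\end{equation*}
which is the infinitesimal form of Jacobi's derivative-of-determinant formula evaluated at the identity matrix.

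To verify this recast version I would introduce the one-parameter family $M(t) := (I+tA)\,[\vec{b}\,|\,\vec{c}\,|\,\vec{d}]$ and compute $\det M(t)$ in two different ways, then compare the coefficients of $t^1$. On the one hand, by the multiplicativity of the determinant and the standard expansion $\det(I+tA) = 1 + t\,\mathrm{Tr}(A) + O(t^2)$, one has $\det M(t) = \det[\vec{b}\,|\,\vec{c}\,|\,\vec{d}] + t\,\mathrm{Tr}(A)\,\det[\vec{b}\,|\,\vec{c}\,|\,\vec{d}] + O(t^2)$. On the other hand, writing $M(t)$ column-wise as $[(\vec{b}+tA\vec{b})\,|\,(\vec{c}+tA\vec{c})\,|\,(\vec{d}+tA\vec{d})]$ and expanding the determinant by multilinearity in its columns, the $t$-coefficient is precisely $\det[A\vec{b}\,|\,\vec{c}\,|\,\vec{d}] + \det[\vec{b}\,|\,A\vec{c}\,|\,\vec{d}] + \det[\vec{b}\,|\,\vec{c}\,|\,A\vec{d}]$. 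Equating the $t$-coefficients produced by the two expansions yields the identity immediately.

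The only ingredient that requires its own justification is the scalar expansion $\det(I+tA) = 1 + t\,\mathrm{Tr}(A) + O(t^2)$, which for a $3\times 3$ matrix follows from a one-line use of the Leibniz formula (the linear-in-$t$ terms pick out exactly the diagonal entries of $A$). There is no real main obstacle: once the triple product is re-expressed as a column determinant, the statement is just the assertion that the differential of $\det$ at $I$ is $\mathrm{Tr}$. A wholly elementary back-up approach, should one wish to avoid the determinant language entirely, is to contract both sides component-wise against Levi-Civita symbols and apply the identity $\varepsilon_{ijk}\varepsilon_{ilm} = \delta_{jl}\delta_{km} - \delta_{jm}\delta_{kl}$; after the contractions, the three terms on the left collapse cleanly to $\mathrm{Tr}(A)$ multiplying the single triple product $(\vec{b}\wedge\vec{c})\cdot\vec{d}$.
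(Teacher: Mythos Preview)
Your proposal is correct and takes a genuinely different route from the paper. The paper proves the identity by brute-force expansion: it writes $A=(a_{ij})$, $\vec{b}=(b_i)$, $\vec{c}=(c_i)$, $\vec{d}=(d_i)$ in components, computes each of the three terms on the left as an explicit polynomial in the entries, adds them up, and recognizes the sum as $(a_{11}+a_{22}+a_{33})$ times the scalar triple product. Your argument instead recasts each triple product as a column determinant and then reads off the identity as the $t$-coefficient of $\det\bigl((I+tA)[\vec{b}\,|\,\vec{c}\,|\,\vec{d}]\bigr)$, computed two ways via multiplicativity and via column multilinearity. This is more conceptual and dimension-independent (the same argument yields the $n\times n$ analogue), and it explains \emph{why} the trace appears, namely as the differential of $\det$ at the identity. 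The paper's approach, by contrast, requires no external facts about determinants and is entirely self-contained, at the cost of a page of explicit bookkeeping. Your Levi-Civita backup is also valid and sits in spirit between the two: still index-based, but organized by a standard contraction identity rather than raw expansion.
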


\begin{proof} 
This can be verified by a straightforward though tedious computation, having defined 
\[	A = \left[ {\begin{array}{ccc}
	a_{11}  & a_{12} & a_{13}  \\
	a_{21}  & a_{22} & a_{23}  \\
	a_{31}  & a_{32} & a_{33}\\
	\end{array} } \right],\quad \vec{b} = \left[ {\begin{array}{c}
	b_{1} \\
	b_{2} \\
	b_{3}\\
	\end{array} } \right],\quad \vec{c} = \left[ {\begin{array}{c}
	c_{1} \\
	c_{2} \\
	c_{3}\\
	\end{array} } \right]~ \text{and} \quad \vec{d} = \left[ {\begin{array}{c}
	d_{1} \\
	d_{2} \\
	d_{3}\\
	\end{array} } \right].\]
The term $[ (A \vec{b}) \wedge \vec{c}] \cdot \vec{d}$ can be written as 
\begin{small}
\[
	\begin{split}
	[ (A \vec{b}) \wedge \vec{c}] \cdot \vec{d}&=d_{1}\left[c_{3}\left(a_{21}b_{1}+a_{22}b_{2}+a_{23}b_{3}\right)-c_{2}\left(a_{31}b_{1}+a_{32}b_{2}+a_{33}b_{3}\right)\right]\\& -d_{2}\left[c_{3}\left(a_{11}b_{1}+a_{12}b_{2}+a_{13}b_{3}\right)-c_{1}\left(a_{31}b_{1}+a_{32}b_{2}+a_{33}b_{3}\right)\right]\\& + d_{3}\left[c_{2}\left(a_{11}b_{1}+a_{12}b_{2}+a_{13}b_{3}\right)-c_{1}\left(a_{21}b_{1}+a_{22}b_{2}+a_{23}b_{3}\right)\right],
	\end{split}
	\label{id11}
\]
\end{small}
and the term $\left[ \vec{b} \wedge (A \vec{c})\right] \cdot \vec{d}$ is simplified to
\begin{small}
\[
	\begin{split}
	\left[ \vec{b} \wedge (A \vec{c})\right] \cdot \vec{d}&=d_{1}\left[b_{2}\left(a_{31}c_{1}+a_{32}c_{2}+a_{33}c_{3}\right)-b_{3}\left(a_{21}c_{1}+a_{22}c_{2}+a_{23}c_{3}\right)\right]\\& -d_{2}\left[b_{1}\left(a_{31}c_{1}+a_{32}c_{2}+a_{33}c_{3}\right)-b_{3}\left(a_{11}c_{1}+a_{12}c_{2}+a_{13}c_{3}\right)\right]\\& + d_{3}\left[b_{1}\left(a_{21}c_{1}+a_{22}c_{2}+a_{23}c_{3}\right)-b_{2}\left(a_{11}c_{1}+a_{12}c_{2}+a_{13}c_{3}\right)\right].
	\end{split}
	\label{id2}
	\]
\end{small}
Also, the term $\left[\vec{b} \wedge \vec{c}\right] \cdot (A \vec{d})$ can be simplified as
\[
\begin{split}
\left[\vec{b} \wedge \vec{c}\right] \cdot (A \vec{d})&=\left(a_{11}d_{1}+a_{12}d_{2}+a_{13}d_{3}\right)\left(b_{2}c_{3}-c_{2}b_{3}\right)\\& -\left(a_{21}d_{1}+a_{22}d_{2}+a_{23}d_{3}\right)\left(b_{1}c_{3}-c_{1}b_{3}\right)\\& + \left(a_{31}d_{1}+a_{32}d_{2}+a_{33}d_{3}\right)\left(b_{1}c_{2}-c_{1}b_{2}\right).
\end{split}
\label{id3}
\]
So, the left hand side of the identity can be obtained by adding these three equations together, resulting in 
\[ 
\begin{split}
&[ (A \vec{b}) \wedge \vec{c}] \cdot \vec{d} ~+~ \left[ \vec{b} \wedge (A \vec{c})\right] \cdot \vec{d} ~+~ \left[\vec{b} \wedge \vec{c}\right] \cdot (A \vec{d})\\&=\left(a_{11}+a_{22}+a_{33}\right) \left[ d_{1} \left( b_{2}c_{3}-c_{2}b_{3}\right) - d_{2} \left( b_{1}c_{3}-c_{1}b_{3} \right) + d_{3} \left( b_{1}c_{2}-b_{2}c_{1} \right) \right].
\end{split}
\]
The term $\left(a_{11}+a_{22}+a_{33}\right)$ is $ \mathrm{Tr} \, (A)$, and the triple scalar product of $\left[ (\vec{b} \wedge \vec{c}) \cdot \vec{d}\right]$ is equal to 
\begin{equation*}
\left[ (\vec{b} \wedge \vec{c}) \cdot \vec{d}\right]=\left[ d_{1} \left( b_{2}c_{3}-c_{2}b_{3}\right) - d_{2} \left( b_{1}c_{3}-c_{1}b_{3} \right) + d_{3} \left( b_{1}c_{2}-b_{2}c_{1} \right) \right] \, , 
\end{equation*}
which establishes the required result.
\end{proof}

\section{Convergence of the unstable Melnikov function $ M^u $}
\label{sec:convergent}

Since $ \bar{\vec{x}}^u $ is a trajectory on the two-dimension unstable manifold of $ \vec{a} $, we know
that we are in case~1, where $ D \vec{f} (\vec{a}) $ has two eigenvalues $\lambda_1^u$ and $\lambda_2^u$ 
with positive real part, and one eigenvalue $\lambda^s < 0 $.  Let is take the (potentially complex valued) 
eigenvectors $\vec{v}_1^u$ and $\vec{v}_2^u$.
 corresponding to $ \lambda_{1,2}^u $ as being normalized.  By assumption, $ \vec{v}_1 $ and $ \vec{v}_2 $
 are linearly independent; this also subsumes the situation of a repeated eigenvalue $ \lambda_1^u = \lambda_2^u $
 with geometric multiplicity $ 2 $.   The eigenspace spanned by $ \vec{v}_1^u $ and $ \vec{v}_2^u $ forms the
 tangent plane to $ \Gamma^u $ at $ \vec{a} $.  The deviation of a trajectory from the point $ \vec{a} $ is therefore
 governed by the linearized flow as $ \tau \rightarrow - \infty $, i.e., 
 \[
 \left| \bar{\vec{x}}^{u}(\tau - t + p, \alpha) - \vec{a} - A(\alpha) \vec{v}_1^u e^{ \lambda_1^u (\tau - t + p)} - B(\alpha) \vec{v}_2^u e^{\lambda_2^u (\tau - t + p)} \right| 
\rightarrow 0 \quad {\mathrm{as}} \quad \tau \rightarrow - \infty \, , 
\]
where $A(\alpha)$ and $B(\alpha)$ are (potentially complex-valued) scalars which are differentiable in $ \alpha $.  Thus, as $\tau$ approaches to negative infinity, 
 \[
\bar{\vec{x}}^{u}(\tau - t + p, \alpha) - \vec{a} \sim  A(\alpha) \vec{v}_1^u e^{ \lambda_1^u (\tau - t + p)} + B(\alpha) \vec{v}_2^u e^{\lambda_2^u (\tau - t + p)} \, , 
\]
Since  $\bar{\vec{x}}^{u}(\tau - t + p, \alpha)$ satisfies the equation $\dot{\vec{x}} = \vec{f}(\vec{x})$, we know that 
\[
\bar{\vec{x}}_{\tau}^{u}(\tau - t + p, \alpha) = \vec{f}(\bar{\vec{x}}^{u}(\tau - t + p, \alpha)),
\]
and so 
\[
\vec{f}(\bar{\vec{x}}^{u}(\tau - t + p, \alpha)) \sim A(\alpha) \lambda_1^u \vec{v}_1^u e^{ \lambda_1^u (\tau - t + p)} + B(\alpha) \lambda_2^u \vec{v}_2^u e^{ \lambda_2^u (\tau - t + p)}.
\]
Furthermore, the $ \alpha $-partial derivative is then
\[
\bar{\vec{x}}^{u}_{\alpha}(\tau - t + p, \alpha) \sim A'(\alpha) \vec{v}_1^u e^{  \lambda_1^u (\tau - t + p)} + B'(\alpha) \vec{v}_2^u e^{\lambda_2^u (\tau - t + p)}.
\]
\begin{lemma}
There exists a constant $ K_2 $ such that for all $ (p,\alpha,\tau) \in (-\infty,P] \times [0, 2 \pi) \times
(-\infty,t] $, and for all $ t \in (-\infty,T] $, 
\begin{equation}
e^{\int_{\tau}^t \nabla \cdot \vec{f}(\bar{\vec{x}}^{u}(\xi - t + p,\alpha)) d\xi}  \bigg \lvert \vec{f}(\bar{\vec{x}}^{u}(\tau - t + p,\alpha)) \wedge \bar{\vec{x}}^{u}_{\alpha}(\tau - t + p, \alpha) \bigg \rvert \le K_3 e^{\lambda^s (t-\tau)} 
 \, .
\label{eq:fxalpha}
\end{equation}
\label{lemma:fxalpha}
\end{lemma}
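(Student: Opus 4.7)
The plan is to obtain the claimed bound by combining three ingredients: (i) the asymptotic formulas for $\vec{f}(\bar{\vec{x}}^u)$ and $\bar{\vec{x}}^u_\alpha$ already recorded in the excerpt, (ii) the observation that $\nabla \cdot \vec{f}(\vec{a}) = \mathrm{Tr}\,(D\vec{f}(\vec{a})) = \lambda_1^u + \lambda_2^u + \lambda^s$, and (iii) a compactness argument for the bounded-$\tau$ regime.

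First I would analyse the wedge product factor. Using the asymptotic expansions stated just before the lemma, as $\tau \to -\infty$ we have
\[
\vec{f}(\bar{\vec{x}}^u(\tau{-}t{+}p,\alpha)) \wedge \bar{\vec{x}}^u_\alpha(\tau{-}t{+}p,\alpha) \sim \bigl[ A(\alpha) B'(\alpha)\lambda_1^u - B(\alpha)A'(\alpha)\lambda_2^u \bigr] \bigl(\vec{v}_1^u \wedge \vec{v}_2^u\bigr) e^{(\lambda_1^u+\lambda_2^u)(\tau-t+p)},
\]
since the cross terms $\vec{v}_i^u \wedge \vec{v}_i^u$ vanish. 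Taking moduli (and real parts when the eigenvalues form a complex-conjugate pair, in which case $\lambda_1^u+\lambda_2^u$ is real anyway), and using continuity of $A,B,A',B'$ on the compact $\alpha$-circle together with linear independence of $\vec{v}_1^u,\vec{v}_2^u$, yields a constant $C_1$ and a time $\tau_0\le T$ such that
\[
\bigl|\vec{f}(\bar{\vec{x}}^u(\tau{-}t{+}p,\alpha)) \wedge \bar{\vec{x}}^u_\alpha(\tau{-}t{+}p,\alpha)\bigr| \le C_1\, e^{\mathrm{Re}(\lambda_1^u+\lambda_2^u)(\tau-t+p)} \quad \text{for }\tau\le \tau_0.
\]

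Second I would control the integrating factor. Since $\vec{f}\in\mathbf{C}^2$ and $\bar{\vec{x}}^u(\xi{-}t{+}p,\alpha)\to\vec{a}$ at an exponential rate governed by $\mathrm{Re}(\min(\lambda_1^u,\lambda_2^u))$, the function $h(\xi):=\nabla\cdot\vec{f}(\bar{\vec{x}}^u(\xi{-}t{+}p,\alpha)) - (\lambda_1^u+\lambda_2^u+\lambda^s)$ decays exponentially as $\xi\to-\infty$. Hence $\int_{-\infty}^{t} h(\xi)\,\d\xi$ converges absolutely, uniformly in $(p,\alpha,t)$ on the stated domain (using boundedness of $D^2\vec{f}$ and the uniform exponential decay of $\bar{\vec{x}}^u-\vec{a}$ for bounded $p$ and $\alpha\in\mathrm{S}^1$), so
\[
\exp\!\Bigl[\int_\tau^t \nabla\cdot\vec{f}(\bar{\vec{x}}^u(\xi{-}t{+}p,\alpha))\,\d\xi\Bigr] \le C_2\, e^{(\lambda_1^u+\lambda_2^u+\lambda^s)(t-\tau)}
\]
for some constant $C_2$ (again taking real parts in the complex case, since the trace of a real matrix is real so this issue does not actually arise).

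Combining these two estimates produces
\[
e^{\int_\tau^t \nabla\cdot\vec{f}}\,\bigl|\vec{f}\wedge\bar{\vec{x}}^u_\alpha\bigr| \le C_1 C_2\, e^{(\lambda_1^u+\lambda_2^u+\lambda^s)(t-\tau)}\, e^{-\mathrm{Re}(\lambda_1^u+\lambda_2^u)(t-\tau)}\, e^{\mathrm{Re}(\lambda_1^u+\lambda_2^u) p}
\]
for $\tau\le\tau_0$, and the two middle exponentials cancel leaving $e^{\lambda^s(t-\tau)}$ (absorbing the $p$-factor into the constant, since $p\le P$ is bounded above). Finally, for $\tau\in[\tau_0,t]$ with $t\le T$, the integrand is a continuous function on the compact set $[\tau_0,T]\times\mathrm{S}^1\times[-\infty,P]$-type domain (after a similar exponential reduction near $p\to-\infty$), so it is uniformly bounded, and choosing $K_3$ large enough absorbs this region into $K_3 e^{\lambda^s(t-\tau)}$ using that $t-\tau$ is bounded there. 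The main obstacle is the rigorous handling of the complex-conjugate eigenvalue case and the uniformity of all constants in $(p,\alpha,t)$: one must verify that the asymptotic coefficients $A(\alpha),B(\alpha)$ are $\mathrm{C}^1$ in $\alpha$ (so $A',B'$ are continuous on the compact $\mathrm{S}^1$) and that the implicit error terms in the linearisation near $\vec{a}$ are uniform, which follows from standard stable/unstable manifold theory applied to the saddle $\vec{a}$.
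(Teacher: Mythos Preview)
Your argument is correct and follows essentially the same route as the paper: compute the asymptotic of the wedge product from the linearised flow near $\vec{a}$ to get a $C_1 e^{\mathrm{Re}(\lambda_1^u+\lambda_2^u)(\tau-t+p)}$ bound, bound the integrating factor by $C_2 e^{(\mathrm{Re}(\lambda_1^u+\lambda_2^u)+\lambda^s)(t-\tau)}$ using that the divergence tends to the trace $\mathrm{Tr}\,D\vec{f}(\vec{a})$, and multiply so that the unstable exponents cancel and only $e^{\lambda^s(t-\tau)}$ survives (with the $e^{\mathrm{Re}(\lambda_1^u+\lambda_2^u)p}$ factor absorbed into $K_3$ via $p\le P$). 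Your treatment is in fact slightly more careful than the paper's: you explicitly separate the regime $\tau\le\tau_0$ where the asymptotics apply from the compact remainder $\tau\in[\tau_0,t]$, and you justify the integrating-factor bound via the absolute convergence of $\int_{-\infty}^t h(\xi)\,\d\xi$ rather than just writing ``$\sim$''; the paper simply asserts these steps.
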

\begin{proof}
Based on the previous estimates, we have
\[ 
\begin{split}
	&	\vec{f}(\bar{\vec{x}}^{u}(\tau - t + p, \alpha)) \wedge \bar{\vec{x}}^{u}_{\alpha}(\tau - t + p, \alpha)\\
	& \sim \left[ A(\alpha) \lambda_1^u \vec{v}_1^u e^{ \lambda_1^u (\tau - t + p)} + B(\alpha) \lambda_2^u \vec{v}_2^u e^{ \lambda_2^u (\tau - t + p)} \right] \wedge \left[ A'(\alpha) \vec{v}_1^u e^{  \lambda_1^u (\tau - t + p)} + B'(\alpha) \vec{v}_2^u e^{\lambda_2^u (\tau - t + p)} \right] \\
	& = \left[ A(\alpha) B'(\alpha)  \lambda_1^u + A'(\alpha) B(\alpha) \lambda_2^u \right] e^{ (\lambda_1^u + \lambda_2^u) (\tau - t + p)} \left[ \vec{v}_1^u \wedge \vec{v}_2^u \right]. 
	\end{split} 
\]
Now, we note that $ \left| \vec{v}_1^u \wedge \vec{v}_2^u \right| \le 1 $ (the eigenvectors are normalized), and 
the functions $ A $ and
$ B $ and its derivatives are bounded on the compact set $ \alpha \in 2 \pi \, \mathrm{S}^1 $. Since we must have
 $ {\mathrm{Im}} \, \lambda_1^u = - {\mathrm{Im}} \, \lambda_2^u $, we obtain
 \[
 \bigg \lvert \vec{f}(\bar{\vec{x}}^{u}(\tau - t + p,\alpha)) \wedge \bar{\vec{x}}^{u}_{\alpha}(\tau - t + p, \alpha) \bigg \rvert \le K_1 e^{\mathrm{Re} (\lambda_1^u + \lambda_2^u) (\tau - t + p)} \, , 
 \]
 for some constant $ K_1 $.  
 Moreover, since the trace of $ D \vec{f} ( \bar{\vec{x}}^{u}(\alpha, \tau - t + p)) $
approaches $ {\mathrm{Tr}} \, D \vec{f}(\vec{a}) =  \lambda_1^u + \lambda_2^u + \lambda^s = 
{\mathrm{Re}} \, (\lambda_1^u + \lambda_2^u) + \lambda^s $ as $ \tau \rightarrow - \infty $, we have
 \[
e^{\int_{\tau}^t \nabla \cdot \vec{f}(\bar{\vec{x}}^{u}(\xi - t + p,\alpha)) d\xi} \sim e^{\int_{\tau}^t 
\left(  \mathrm{Re} \left( \lambda_1^u + \lambda_2^u \right) + \lambda^s \right) \d \xi} = e^{\left( 
\mathrm{Re} \left(  \lambda_1^u + \lambda_2^u \right) + \lambda^s \right) 
(t- \tau)} \, .
\]
Consequently, the exponential term can be bounded by a constant $ K_2 $ times the term on the right.
We can now estimate the product by
\begin{align*}
e^{\int_{\tau}^t \nabla \cdot \vec{f}(\bar{\vec{x}}^{u}(\xi - t + p,\alpha)) d\xi} 
\bigg \lvert \vec{f}(\bar{\vec{x}}^{u}(\tau - t + p,\alpha)) \wedge \bar{\vec{x}}^{u}_{\alpha}(\tau - t + p, \alpha) \bigg \rvert 
&\le 
K_1 e^{\mathrm{Re} (\lambda_1^u + \lambda_2^u) (\tau - t + p)} K_2 e^{\left( 
\mathrm{Re} \left(  \lambda_1^u + \lambda_2^u \right) + \lambda^s \right) 
(t- \tau)} \\
&= K_1 K_2 e^{\mathrm{Re} \left( \lambda_1^u + \lambda_2^u \right) p} e^{\lambda^s(t-\tau)} \\
&= K_3 e^{\lambda^s (t-\tau)} \, , 
\end{align*}
for a constant $ K_3 $, as desired.
 \end{proof}

Using the result of Lemma~\ref{lemma:fxalpha}, since $\vec{z}^{u}(p,\alpha, \tau)$ is bounded (say by a constant
$ K_4 $), from (\ref{eq:melu}), we obtain the bound
\[
\left| M^u(p,\alpha,t) \right| \le 
K_3 K_4  e^{\lambda^s t} \int_{-\infty}^t e^{-\lambda^s \tau} \, \d \tau = K_3 K_4 e^{\lambda^s t} \frac{e^{-\lambda^s \tau}}{- \lambda^s} \Big|_{-\infty}^t = \frac{K_3 K_4}{-\lambda^s} \, ,   
\]
where the limit is convergent because $ \lambda^s < 0 $.

\section{Proof that $ \widetilde{M}^u $ and $ M^u $ are $ {\mathcal O}(\epsilon) $-close}
\label{sec:close}

Let $ m(\tau) := \widetilde{M}^{u}(p,\alpha,\epsilon,\tau) - M^{u}(p,\alpha,\tau) $ be the difference in
the two functions at a general time $ \tau $; we need to show that $ m(t) = {\mathcal O}(\epsilon) $.
Subtracting the equation (\ref{eq:meldiffunp}) from (\ref{eq:meldiffpert}), 
and multiplying by the integrating factor $ \mu(\tau) $ we get
\[
\frac{\partial}{\partial \tau}\left[\mu(\tau) m(\tau)\right] 
 = \epsilon \mu(\tau) H(\tau)	
\]
subject to the condition $ m(-\infty) = 0 $.  This has a solution
\[
m(t) = \epsilon \int_{-\infty}^t \frac{\mu(\tau)}{\mu(t)}  H(\tau) \, \d \tau = \eps \int_{-\infty}^t  \exp\left[{\int_\tau^t \nabla \cdot \vec{f}(\bar{\vec{x}}^{u}(\xi - t + p, \alpha)) \d \xi}\right] H(\tau) \, \d \tau \, .
\]
Now, $ H(\tau) $ in (\ref{eq:Htau}) can be factored: one term consists of exactly the left-hand side of
(\ref{eq:fxalpha}), whereas the remainder of the terms are bounded because of the boundedness of $ \vec{z}^u $
(as argued in Appendix~\ref{sec:normaldis3}), and of 
$ D^2 \vec{f} $ and $ D \vec{g} $ (by hypothesis).  Applying Lemma~\ref{lemma:fxalpha}, we therefore obtain
\[
\left| m(t) \right| \le \epsilon K_5 \int_{-\infty}^t e^{\lambda^s (t-\tau)} \, \d \tau = \frac{\epsilon K_5}{-\lambda^s} \, .
\] 
for some constant $ K_5 $.  Hence, $ m(t) = {\mathcal O}(\epsilon) $ as desired.

\section{Proof of Theorem~\ref{theorem:heteroclinic} (Heteroclinic manifold splitting)}
\label{sec:melnikov}

For fixed $ (p,\alpha,t) $ in the relevant domains, we know that $ d^u $ in Theorem~\ref{theorem:melnikov_unstable} provides the displacement of $ \Gamma_\epsilon^u(\vec{a}_\epsilon) $ from $ \bar{\vec{x}}(p,\alpha) $ in the direction normal to $ \Gamma $,
and similarly, $ d^s $ in Theorem~\ref{theorem:melnikov_stable} the displacement of $ \Gamma_\epsilon^s(\vec{b}_\epsilon) $ in the same direction.  Since $ \bar{\vec{x}} = \bar{\vec{x}}^u = \bar{\vec{x}}^s $ in this instance,
\begin{align*}
d(p,\alpha,t,\epsilon &= d^u(p,\alpha,t,\epsilon) - d^s(p,\alpha,t,\epsilon) \\
&= \epsilon \frac{ M^u(p, \alpha, t)}{\lvert \vec{f}(\bar{\vec{x}}(p,\alpha)) \wedge \bar{\vec{x}}_\alpha(p, \alpha)\rvert} - \epsilon \frac{ M^s(p, \alpha, t)}{\lvert \vec{f}(\bar{\vec{x}}(p, \alpha)) \wedge \bar{\vec{x}}_\alpha(p, \alpha)\rvert} + \mathcal{O}(\epsilon^2) \\
&=  \epsilon \frac{ M^u(p, \alpha, t) - M^s(p,\alpha,t)}{\lvert \vec{f}(\bar{\vec{x}}(p,\alpha)) \wedge \bar{\vec{x}}_\alpha(p, \alpha)\rvert}+ \mathcal{O}(\epsilon^2) \\
&=: \epsilon \frac{ M(p, \alpha, t) }{\lvert \vec{f}(\bar{\vec{x}}(p,\alpha)) \wedge \bar{\vec{x}}_\alpha(p, \alpha)\rvert} + \mathcal{O}(\epsilon^2) \, , 
\end{align*}
where from (\ref{eq:melnikov_unstable}) and (\ref{eq:melnikov_stable}), we get
\begin{align*}
M(p, \alpha, t) &= M^u(p, \alpha, t) - M^s(p, \alpha, t), \\ 
&= \int_{-\infty}^p \exp\left[{\int_{\tau}^p \nabla \cdot \vec{f}(\bar{\vec{x}}(\xi, \alpha)) d\xi}\right] \left[ \vec{f}(\bar{\vec{x}}(\tau, \alpha)) \wedge \bar{\vec{x}}_{\alpha}(\tau, \alpha)\right] \cdot \vec{g}(\bar{\vec{x}}(\tau, \alpha),\tau + t -p)~ \d \tau \\
& - \left( - \int^{\infty}_p \exp\left[{\int_{\tau}^p \nabla \cdot \vec{f}(\bar{\vec{x}}(\xi, \alpha)) d\xi}\right] \left[ \vec{f}(\bar{\vec{x}}(\tau, \alpha)) \wedge \bar{\vec{x}}_{\alpha}(\tau, \alpha)\right] \cdot \vec{g}(\bar{\vec{x}}(\tau, \alpha),\tau + t -p)~ \d \tau \right) \\
& = \int^{\infty}_{-\infty} \exp\left[{\int_{\tau}^p \nabla \cdot \vec{f}(\bar{\vec{x}}(\alpha, \xi)) d\xi}\right] \left[ \vec{f}(\bar{\vec{x}}(\alpha, \tau)) \wedge \bar{\vec{x}}_{\alpha}(\alpha, \tau)\right] \cdot \vec{g}(\bar{\vec{x}}(\alpha, \tau),\tau + t -p)~ \d \tau \, , 
\end{align*}
as desired.

\section{Proof of Theorem~\ref{theorem:lobevolume} (Lobe volume)}
\label{sec:lobevolume}

We note that there is a nearby region, $ R^\star $, such that $ d(p,\alpha,\epsilon,t) $ sign-definite
on $ R^\star $, and moreover $ R^\star $'s boundary is $ Q^\star $, which consists of closed curves
which are $ {\mathcal O}(\epsilon)$-close to $ Q $.  While the lobe volume should properly be calculated
by integrating $ d $ over $ R^\star $, the error in integrating $ \epsilon M $ over $ R $ instead is 
of higher-order in $ \epsilon $.  Consequently, the leading-order lobe volume only requires leading-order
information.

Since $ \Gamma $ is $ (p,\alpha) $-parametrized by $ \bar{\vec{x}}(p,\alpha) $, we can write the vector
surface element on $ \Gamma $ by
\[
\vec{d S} = \bar{\vec{x}}_p(p,\alpha)  \wedge \bar{\vec{x}}_\alpha(p,\alpha) \, \d p \, \d \alpha 
= \vec{f} \left( \bar{\vec{x}}(p,\alpha) \right) \wedge \bar{\vec{x}}_\alpha(p,\alpha) \, \d p \, \d \alpha
\]
However, we know that the signed distance between the perturbed stable and unstable
manifolds, measured perpendicular to $ \Gamma $ at $ \bar{\vec{x}}(p,\alpha) $, is given by $ d $ in (\ref{eq:distancemelnikov}).  Noting moreover that using $ R $ rather than $ R^\star $ results in a higher-order error, and $ d $ itself is $ {\mathcal O}(\epsilon) $, we can write the volume of the lobe lying between the manifolds as
\begin{align*}
{\mathrm{Lobe~volume}} \, &= \int \! \! \! \! \int_{R^\star} \left| d(p,\alpha,\eps,t) \right| \, \left| \vec{d S} \right| \\
&= \int \! \! \! \! \int_{R} \left| d(p,\alpha,\eps,t) \right| \, \left| \vec{d S} \right| + {\mathcal O}(\epsilon^2) \\
&= \int \! \! \! \! \int_R \left|  \epsilon \frac{ M(p, \alpha, t)}{\lvert \vec{f}(\bar{\vec{x}}(p, \alpha)) \wedge \bar{\vec{x}}_\alpha(p, \alpha)\rvert} + \mathcal{O}(\epsilon^2) \right| \,  \left| \vec{f} \left( \bar{\vec{x}}(p,\alpha) \right) \wedge \bar{\vec{x}}_\alpha(p,\alpha) \right| \, \d p \, \d \alpha 
+ {\mathcal O}(\epsilon^2) \, , 
\end{align*}
which immediately gives the desired result.

\section{Proof of Theorem~\ref{theorem:lobevolume_harmonic} (Lobe volume for harmonic perturbations in the volume-preserving situation)}
\label{sec:lobevolume_harmonic}

Consider any one of the ring-lobes $ L_k $.  By Theorem~\ref{theorem:lobevolume}, its volume to leading-order
in $ \epsilon $ is given by
\[
\mathrm{Volume} \left( L_k \right) = \epsilon \int_0^1 \int_{\tilde{p}(\alpha,k-1)}^{\tilde{p}(\alpha,k)} \left| M(p,\alpha,t) \right| \, \d p \, \d \alpha + {\mathcal O}(\epsilon^2) \, .
\]
Employing (\ref{eq:melnikovharmonic}), and under the harmonic assumption in which $ h $ is independent
of $ p$, we get
\begin{align*}
\mathrm{Volume} \left( L_k \right) &= \epsilon \int_0^1 \int_{\tilde{p}(\alpha,k-1)}^{\tilde{p}(\alpha,k)} \left| {\mathcal F}\left\{ h(\alpha,\centerdot) \right\}(\omega) \right| \, \left| \cos \left[ \omega \left( t -p\right) + \phi + \mathrm{arg} \left(
{\mathcal F} \left\{ h (\alpha,\centerdot) \right\} (\omega) \right) \right] \right| \, \d p \, \d \alpha 
+ {\mathcal O}(\epsilon^2) \\
&= \epsilon \int_0^1 \left| {\mathcal F}\left\{ h(\alpha,\centerdot) \right\}(\omega) \right| \int_{\tilde{p}(\alpha,k-1)}^{\tilde{p}(\alpha,k)}  \left| \cos \left[ \omega \left( t -p\right) + \phi + \mathrm{arg} \left(
{\mathcal F} \left\{ h (\alpha,\centerdot) \right\} (\omega) \right) \right] \right| \, \d p \, \d \alpha 
+ {\mathcal O}(\epsilon^2) 
\end{align*}
The inner $ p $-integral is between adjacent zeros of the cosine function, specifically as given by the 
functions $ \tilde{p} $.  Note that this is of the absolute value of the cosine function of $ -\omega p $ plus a phase shift.  The
phase shift does not affect the integral because it is between adjacent zeros of $ p $.  Thus, we can simply shift the integral to be between any two adjacent zeros, and discard the entire phase shift
$ \omega t + \phi + \mathrm{arg} \left( {\mathcal F} \left\{ h (\alpha,\centerdot) \right\} (\omega) \right) $.  We choose $ \omega p $ to be between $ - \pi/2 $ and $ \pi/2 $, i.e., $ p $ between $ - \pi/(2 \omega) $ and $ \pi/(2 \omega) $.  Thus,
\begin{align*}
\mathrm{Volume} \left( L_k \right) &= 
\epsilon \int_0^1 \left| {\mathcal F}\left\{ h(\alpha,\centerdot) \right\}(\omega) \right| \int_{-\pi/(2 \omega)}^{\pi/(2 \omega)}  \cos \left[ - \omega p \right]  \, \d p \, \d \alpha 
+ {\mathcal O}(\epsilon^2) \nonumber \\
&= \frac{ 2 \epsilon}{\omega} \int_0^1 \left| {\mathcal F}\left\{ h(\alpha,\centerdot) \right\}(\omega) \right| \, \d \alpha + {\mathcal O}(\epsilon^2) \, , 
\label{eq:lobevolume_harmonic}
\end{align*}
whose leading-order term in $\epsilon $ is independent of $ k $.  Thus, in the time-harmonic situation in
which the unperturbed flow is volume-preserving, we get ring-lobes, {\em all} of which have the same
$ {\mathcal O}(\epsilon) $ volume.  

\section{Proof of Theorem~\ref{theorem:flux} (Instantaneous flux)}
\label{sec:flux_proof}

The pseudo-separatrix consists of three different segments. There is {\em no} Lagrangian flux 
across the stable and unstable manifold parts, because these are invariant objects.  They move with time,
but remain material surfaces.  The only flux that can occur is that crossing the strip.  We note from
Fig.~\ref{fig:pseudoseparatrix} that in parts of the strip where the unstable manifold is outside the
stable one, the flux will be outwards, and hence will be positive.  Thus, in $ (p,\alpha,t) $ regions
in which the Melnikov function is positive, a positive contribution to the flux occurs.  Conversely, if
the unstable manifold is inside the stable one, the flux is into the closed surface and hence negative,
again consonant with the sign of the Melnikov function at such points.  

A general point $ \vec{r} $ on the strip $ S $, as given in (\ref{eq:strip}), is
\begin{align*}
\vec{r}(s,\alpha,\epsilon,t) &= \bar{\vec{x}}(p,\alpha) + 
    \hat{\vec{n}}(p,\alpha) \left[ s d^u(p,\alpha,\epsilon,t) + (1-s) d^s(p,\alpha,\epsilon,t) \right] \quad ; \quad (s,\alpha) \in [0,1] \times \mathrm{S}^1 \\
    &= \bar{\vec{x}}(p,\alpha) + 
    \epsilon \frac{\vec{f} \left( \bar{\vec{x}}(p,\alpha) \right) \wedge \bar{\vec{x}}_\alpha(p,\alpha)}{\left|\vec{f} \left( \bar{\vec{x}}(p,\alpha) \right) \wedge \bar{\vec{x}}_\alpha(p,\alpha)\right| }
    \frac{s M^u(p,\alpha,t) + (1-s) M^s(p,\alpha,t)}{ \left| \vec{f} \left( \bar{\vec{x}}(p,\alpha) \right) \wedge \bar{\vec{x}}_\alpha(p,\alpha) \right| } + {\mathcal O}(\epsilon^2) \, .
\end{align*}
The vector surface element on $ S $ in terms of the $ (s,\alpha) $-parametrization, chosen
so that the outward normal is positive, is therefore
\begin{align*}
    \vec{\d S} &= \vec{r}_\alpha(s,\alpha,\epsilon,t) \wedge \vec{r}_s(s,\alpha,\epsilon,t) \, \d s \, \d \alpha \\
    &= \left[ \bar{\vec{x}}_\alpha(p,\alpha) + {\mathcal O}(\epsilon) \right] \wedge \epsilon \frac{\vec{f} \left( \bar{\vec{x}}(p,\alpha) \right) \wedge \bar{\vec{x}}_\alpha(p,\alpha)}{\left|\vec{f} \left( \bar{\vec{x}}(p,\alpha) \right) \wedge \bar{\vec{x}}_\alpha(p,\alpha)\right|^2} \left[ M^u(p,\alpha,t) - M^s(p,\alpha,t) \right]  \, \d s \, \d \alpha + {\mathcal O}(\epsilon^2) \\
    &= \epsilon \frac{M(p,\alpha,t) \, \bar{\vec{x}}_\alpha(p,\alpha) \wedge \left[ \vec{f} \left( \bar{\vec{x}}(p,\alpha) \right) \wedge \bar{\vec{x}}_\alpha(p,\alpha) \right]}{\left|\vec{f} \left( \bar{\vec{x}}(p,\alpha) \right) \wedge \bar{\vec{x}}_\alpha(p,\alpha)\right|^2} \, \d s \, \d \alpha + {\mathcal O}(\epsilon^2) .
\end{align*}
Therefore, the Lagrangian flux crossing $ S $ is
\begin{small}
\begin{align*}
\Phi(p,t,\epsilon) &= \iint_S 
 \left[ \vec{f}(\bar{\vec{x}}(p,\alpha) + \epsilon \vec{g}\left( \bar{\vec{x}}(p,\alpha),t \right) \right] \cdot \vec{\d S} \nonumber \\
&= \int_0^1 \int_0^1
 \left[ \vec{f}(\bar{\vec{x}}(p,\alpha) + \epsilon \vec{g}\left( \bar{\vec{x}}(p,\alpha),t \right) \right] \cdot  \epsilon \frac{M(p,\alpha,t) \, \bar{\vec{x}}_\alpha(p,\alpha) \wedge \left[ \vec{f} \left( \bar{\vec{x}}(p,\alpha) \right) \wedge \bar{\vec{x}}_\alpha(p,\alpha) \right]}{\left|\vec{f} \left( \bar{\vec{x}}(p,\alpha) \right) \wedge \bar{\vec{x}}_\alpha(p,\alpha)\right|^2} \, \d s \, \d \alpha + {\mathcal O}(\epsilon^2) \nonumber \\
 &= \eps \int_0^1 \int_0^1 M(p,\alpha,t) \frac{\vec{f}(\bar{\vec{x}}(p,\alpha) \cdot \left[ \bar{\vec{x}}_\alpha(p,\alpha) \wedge \left[ \vec{f} \left( \bar{\vec{x}}(p,\alpha) \right) \wedge  \bar{\vec{x}}_\alpha(p,\alpha) \right] \right] }{\left|\vec{f} \left( \bar{\vec{x}}(p,\alpha) \right) \wedge \bar{\vec{x}}_\alpha(p,\alpha)\right|^2} \, \d s \, \d \alpha + {\mathcal O}(\epsilon^2) \nonumber \\
 &= \eps \int_0^1 \int_0^1 M(p,\alpha,t) \frac{\vec{f}(\bar{\vec{x}}(p,\alpha) \cdot \left[ \vec{f}(\bar{\vec{x}}(p,\alpha)) \left[ \bar{\vec{x}}_\alpha(p,\alpha) \cdot \bar{\vec{x}}_\alpha(p,\alpha) \right] - \bar{\vec{x}}_\alpha(p,\alpha) \left[ \vec{f}(\bar{x}(p,\alpha)) \cdot \bar{\vec{x}}_\alpha(p,\alpha) \right]\right] }{\left|\vec{f} \left( \bar{\vec{x}}(p,\alpha) \right) \wedge \bar{\vec{x}}_\alpha(p,\alpha)\right|^2} \, \d s \, \d \alpha + {\mathcal O}(\epsilon^2) \nonumber \\
 &= \eps \int_0^1 \int_0^1 M(p,\alpha,t) \frac{\left| \vec{f}(\bar{\vec{x}}(p,\alpha) \right|^2 \left| \bar{\vec{x}}_\alpha(p,\alpha) \right|^2 - \left| \vec{f}(\bar{\vec{x}}(p,\alpha) \cdot \bar{\vec{x}}_\alpha(p,\alpha) \right|^2 }{\left|\vec{f} \left( \bar{\vec{x}}(p,\alpha) \right) \wedge \bar{\vec{x}}_\alpha(p,\alpha)\right|^2} \, \d s \, \d \alpha + {\mathcal O}(\epsilon^2) \nonumber \\ 
 &= \eps \int_0^1 \int_0^1 M(p,\alpha,t) \, \d s \, \d \alpha + {\mathcal O}(\epsilon^2) \nonumber \\
 &= \eps \int_0^1 M(p,\alpha,t) \, \d \alpha + {\mathcal O}(\epsilon^2) \, , 
\end{align*}
\end{small}
as required. In this derivation, we have used standard vector identities in three-dimensions: the `bac-cab' rule and Lagrange's identity.

\section{Proof of Corollary~\ref{corollary:flux_harmonic} (Instantaneous flux for harmonic perturbations)}
\label{sec:flux_harmonic_proof}

In this proof, we will use the shorthand notation
\[
F(p,\alpha) := {\mathcal F} \left\{ h \left(p, \alpha, \centerdot \right) \right\} (\omega) \, .\]
Inserting the expression for the time-harmonic Melnikov function (\ref{eq:melnikovharmonic}) into the instantaneous flux formula (\ref{eq:flux}), and performing standard trigonometric manipulations, we get
\begin{align*}
    \Phi(p,t,\epsilon) &= \epsilon \int_0^1 \left| F(p,\alpha) \right| \cos \left[ \omega (t-p) + \phi + \mathrm{arg}\left( F(p,\alpha) \right) \right] \, \d \alpha + {\mathcal O}(\epsilon^2) \\
    &= \epsilon \Bigg\{ \cos \left[ \omega(t-p)+\phi \right] \int_0^1 \left| F(p,\alpha) \right| \cos \left[ 
    \mathrm{arg} \left( F(p,\alpha) \right) \right] \, \d \alpha \\
    & \hspace*{1cm} - \sin \left[ \omega(t-p)+\phi \right] \int_0^1 \left| F(p,\alpha) \right| \sin \left[
    \mathrm{arg} \left( F(p,\alpha) \right) \right] \, \d \alpha \Bigg\} + {\mathcal O}(\epsilon^2) \\
    &= \epsilon \left\{ \cos \left[ \omega(t-p)+\phi \right] \int_0^1 \mathrm{Re} \left(
    F(p,\alpha) \right) \, \d \alpha -  \sin \left[ \omega(t-p)+\phi \right] \int_0^1 \mathrm{Im} \left(
    F(p,\alpha) \right) \, \d \alpha \right\} + {\mathcal O}(\epsilon^2) \\
    &= \epsilon \sqrt{ \left( \int_0^1 \mathrm{Re} \left(
    F(p,\alpha) \right) \, \d \alpha \right)^2 + \left( \int_0^1 \mathrm{Im} \left(
    F(p,\alpha) \right) \, \d \alpha \right)^2 } \Bigg\{ \frac{ \cos \left[ \omega(t-p)+\phi \right] \int_0^1 \mathrm{Re} \left(
    F(p,\alpha) \right) \, \d \alpha}{
    \sqrt{ \left( \int_0^1 \mathrm{Re} \left(
    F(p,\alpha) \right) \, \d \alpha \right)^2 + \left( \int_0^1 \mathrm{Im} \left(
    F(p,\alpha) \right) \, \d \alpha \right)^2 }} \\
    & \hspace*{1cm} - \frac{ \sin\left[ \omega(t-p)+\phi \right] \int_0^1 \mathrm{Im} \left(
    F(p,\alpha) \right) \, \d \alpha}{
    \sqrt{ \left( \int_0^1 \mathrm{Re} \left(
    F(p,\alpha) \right) \, \d \alpha \right)^2 + \left( \int_0^1 \mathrm{Im} \left(
    F(p,\alpha) \right) \, \d \alpha \right)^2 }} \Bigg\} + {\mathcal O}(\epsilon^2) \\
    &= \epsilon \left| \int_0^1 F(p,\alpha) \, \d \alpha \right| \Bigg\{ 
    \cos \left[ \omega(t-p) + \phi \right] \frac{ \mathrm{Re} \left( \int_0^1 F(p,\alpha) \, \d \alpha
    \right)}{
    \left| \int_0^1 F(p,\alpha) \, \d \alpha \right|} \\
    & \hspace*{1cm} - \sin \left[ \omega(t-p) + \phi \right] \frac{ \mathrm{Im} \left( \int_0^1 F(p,\alpha) \, \d \alpha
    \right)}{
    \left| \int_0^1 F(p,\alpha) \, \d \alpha \right|} \Bigg\} + {\mathcal O}(\epsilon^2) \\
    &= \epsilon \left| \int_0^1 F(p,\alpha) \, \d \alpha \right| \Bigg\{ 
    \cos \left[ \omega(t-p) + \phi \right]  \cos \left[ \mathrm{arg} \left( \int_0^1 F(p,\alpha) \, 
    \d \alpha \right) \right] \\
    & \hspace*{1cm} -  \sin \left[ \omega(t-p) + \phi \right]  \sin \left[ \mathrm{arg} \left( \int_0^1 F(p,\alpha)    \d \alpha \right) \right] \Bigg\} + {\mathcal O}(\epsilon^2) \\
    &= \epsilon \left| \int_0^1 F(p,\alpha) \, \d \alpha \right|  \cos \left[ \omega(t-p) + \phi+ 
    \mathrm{arg} \left( \int_0^1 F(p,\alpha) \, \d \alpha \right) \right] + {\mathcal O}(\epsilon^2) \, , 
\end{align*}
as required.

\bibliographystyle{plain}
\bibliography{introbib}

\end{document}